\renewcommand{\ti}{{\times}}
\numberwithin{equation}{section}
\numberwithin{figure}{section}
\DeclareMathOperator{\trace}{\mafo{tr}}
\newcommand{\HK}{\mathsf H\!\!\mathsf K}
\newcommand{\sfg}{\mathsf g}
\newcommand{\sfp}{\mathsf p}
\newcommand{\sfq}{\mathsf q}
\newcommand{\sfE}{\mathsf E}
\newcommand{\sfF}{\mathsf F}
\newcommand{\sfG}{\mathsf G}
\newcommand{\sfH}{\mathsf H}
\newcommand{\sfT}{\mathsf T}
\newcommand{\He}{\mathsf{He}}
\newcommand{\SHe}{{\mathsf S\!\mathsf H\mathsf{e}}}
\newcommand{\SHK}{{\mathsf S\!\mathsf H\!\!\mathsf K}}
\newcommand{\Ot}{\mathsf{Otto}}
\newcommand{\mfb}{\mathfrak b}
\newcommand{\mfc}{\mathfrak c}
\newcommand{\mfd}{\mathfrak d}
\newcommand{\mfG}{\mathfrak G}
\newcommand{\mfA}{\mathfrak A}
\newcommand{\mfP}{\mathfrak P}
\newcommand{\mfQ}{\mathfrak Q}
\newcommand{\LB}{\lambda_\rmB}
\newcommand{\uu}{u}
\newcommand{\HESS}{\mathop{\mafo{Hess}}\nolimits}
\newcommand{\spd}{\mathrm{spd}}
\newcommand{\target}{\uppi}
\newcommand{\tdfrac}{\tfrac} %
\newcommand{\EEE}{\color{black}}
\begin{document}

\title{Evolution of Gaussians in the 
\\Hellinger-Kantorovich-Boltzmann gradient flow%
}

\author{Matthias Liero\thanks{Weierstra\ss-Institut f\"ur Angewandte
   Analysis und Stochastik, Mohrenstr.\,39, 10117 Berlin, Germany}\,, Alexander
  Mielke$^*$, Oliver Tse\thanks{Department of Mathematics and Computer Science, Eindhoven University of Technology, 5600 MB Eindhoven, The Netherlands}, Jia-Jie Zhu$^*$}

\date{}
 
\maketitle

\begin{abstract}
This study leverages the basic insight that the gradient-flow equation associated with the relative Boltzmann entropy, in relation to a Gaussian reference measure within the Hellinger–Kantorovich (HK) geometry, preserves the class of Gaussian measures. This invariance serves as the foundation for constructing a reduced gradient structure on the parameter space characterizing Gaussian densities. We derive explicit ordinary differential equations that govern the evolution of mean, covariance, and mass under the HK–Boltzmann gradient flow. The reduced structure retains the additive form of the HK metric, facilitating a comprehensive analysis of the dynamics involved.

We explore the geodesic convexity of the reduced system, revealing that global convexity is confined to the pure transport scenario, while a variant of sublevel semi-convexity is observed in the general case. Furthermore, we demonstrate exponential convergence to equilibrium through Polyak–Łojasiewicz-type inequalities, applicable both globally and on sublevel sets. By monitoring the evolution of covariance eigenvalues, we refine the decay rates associated with convergence. Additionally, we extend our analysis to non-Gaussian targets exhibiting strong log-$\lambda$-concavity, corroborating our theoretical results with numerical experiments that encompass a Gaussian-target gradient flow and a Bayesian logistic regression application.

\medskip

\noindent
\emph{Keywords:} Gaussian measures, Hellinger-Kantorovich distance,
gradient-flow equation, relative Boltzmann entropy, Kullback-Leibler
divergence.  \smallskip

\noindent
\emph{MSC (2020) Subject Classifications:} Primary: 49Q22 (Optimal transport methods)
Secondary: 35Q49 (Transport equations)
\end{abstract}

\tableofcontents
\section{Introduction}
\label{se:intro}

Many computational problems in applied mathematics and statistics can be 
formulated as minimizing the relative Boltzmann entropy $\calH_\rmB$ over a 
subset $\mfA\subset \calM(\bbR^d)$ of finite measures on $\bbR^d$:
\begin{align}
    \min_{\mu \in \mfA} \calH_\rmB(\mu | \target)
      .
      \label{eq:forwardKL}
\end{align}
When $\mu$ and $\target$ are probability measures in $\calP(\bbR^d)$, then 
$\calH_\rmB(\mu|\target)$ is known as the Kullback-Leibler (KL) divergence 
between the measure $\mu$ and some \emph{target} measure $\target$.

For example, in the standard Bayesian inference framework
\cite{zellner1988optimal},
one considers a parametric model 
$p(\mathrm{Data}|\theta)$ with prior $p(\theta)$. The objective is then to infer 
the posterior distribution $\target(\theta) := p(\theta| \mathrm{Data})$ based 
on the data we have observed.
Except for the trivial cases, such as linear regression, the posterior $\target$
cannot be computed analytically and one must resort to computational approaches for
solving the variational inference problem as formulated in~\eqref{eq:forwardKL}.
The variational inference framework
\cite{jordanIntroductionVariationalMethods1999,
wainwrightGraphicalModelsExponential2008, blei_variational_2017} is based on 
the idea of restricting to measures $\mu$ to a specific class $\mfA$ of 
probability measures, such as Gaussian probability measures, allowing one 
to approximate the target measure $\target$ in a tractable manner.

Using the seminal work by \cite{Otto01GDEE}, one can view
solving the variational problem~\eqref{eq:forwardKL} over the space of 
probability measures $\calP(\bbR^d)$ as an Otto-Wasserstein gradient flow of 
the relative Boltzmann entropy \cite{wibisonoSamplingOptimizationSpace2018}. 
The associated gradient-flow equation is given by
\begin{align}
  \dot \rho &= 
  \Delta \rho 
  -
  \DIV\!\big( 
   {\rho}\nabla\log \target\big).
  \label{eq:WassersteinGradientFlow}
\end{align}
For the aforementioned Gaussian variational inference, one often restricts the
gradient flow equation \eqref{eq:WassersteinGradientFlow} to a submanifold of 
$\calP(\bbR^d)$, e.g., to the submanifold of Gaussian probability measures 
$\mfG_1\subset\calP(\R^d)$. Riemannian structures on $\mfG_1$ induced by the 
2-Wasserstein distance and the Fisher-Rao distance (also called Hellinger 
distance), respectively, have been
extensively studied in \cite{Modin2017}, where the author further provided
connections to entropy gradient flows and matrix decomposition problems. 
Other works of similar nature include
\cite{takatsu2011wasserstein, lambertVariationalInferenceWasserstein2022}, 
which investigated the restriction of \eqref{eq:WassersteinGradientFlow} to 
the family of Gaussian measures, and \cite{amariMethodsInformationGeometry2000,
amariInformationGeometry2021, chenGradientFlowsSampling2023} that focused on 
Fisher-Rao gradient flows.
In this work, we apply our findings from the Hellinger-Kantorovich ($\HK$) gradient flow of Gaussian measures, combining both the Bures-Wasserstein (Otto-Wasserstein but restricted to Gaussian measures) and Fisher-Rao (Hellinger restricted to Gaussian measures) structures, to solve the variational problem~\eqref{eq:forwardKL}. 
As such, this manuscript provides a survey and original research from a perspective different from that of other works currently available.
In particular, we derive reduced Onsager operators for the $\HK$-Gaussian to streamline the derivation of the reduced gradient flows over the Gaussian manifold.

\paragraph{$\HK_{\alpha,\beta}$-Boltzmann gradient system over Gaussian measures}
We consider the grad-\\
ient-flow equation for the gradient system $(\calM(\R^d),\calE, 
\HK_{\alpha,\beta})$, where $\calM(\R^d)$ is the set of all non-negative,
finite measures on $\R^d$ and the driving energy $\calE=\calH_\rmB 
(\cdot|\target)$ is the Boltzmann entropy
relative to an equilibrium measure $\target$, i.e.,
\[
\calH_\rmB(\rho|\target):= 
    \begin{cases}\displaystyle\int_{\R^d} 
    \lambda_\rmB\left(\frac{\dd \rho}{\dd \target}(x)\right) \target(\rmd x) 
    & \text{if $\rho\ll \target$,} \\
	+\infty & \text{otherwise,}
    \end{cases}
    \quad\text{with}\quad \lambda_\rmB(r) = r \log r - r +1.
\]
Following \cite{LiMiSa16OTCR,LiMiSa18OETP}, the 
$(\alpha,\beta)$-Hellinger-Kantorovich distance $\HK_{\alpha,\beta}$ 
is defined in terms of the (formal) Onsager operator given by
\[
\bbK^{\HK}_{\alpha,\beta}(\rho)\xi 
 = \alpha \bbK_\Ot(\rho)\xi + \beta \bbK_\mafo{He} (\rho)\xi
 := - \alpha \DIV\!\big(\rho\nabla \xi\big)+ \beta \rho \xi.
\]
Here, $\alpha$ and $\beta$ are two non-negative parameters measuring the
strength of the Wasserstein-Kantorovich part ($\alpha=1$, $\beta=0$) and 
the Hellinger part ($\alpha=0$, $\beta=4$).

The gradient-flow equation is given by $\dot\rho= - \bbK_{\alpha,\beta}(\rho)
\rmD \calE(\rho) $ and takes the explicit form
\begin{align}
\label{eq:I.HK.PDE}
\dot \rho = \alpha \DIV\!\big( \rho\nabla \log \bigl(\tdfrac{\rho}\target\bigr)\big) - \beta \rho \log\big(\tdfrac{\rho}\target\big).
\end{align}
We emphasize that $\rho\mapsto \calH_\rmB(\rho|\target)$ is not geodesically $\lambda$-convex in $(\calM(\R^d),\HK_{\alpha,\beta})$ whenever $\beta >0$, see
\cite{LiMiSa23FPGG}. Hence, the theory of evolutionary variational inequalities in the sense of \cite{AmGiSa05GFMS,LasMie22?EVIH} is not available. 

Instead, this paper focuses on a special subclass of solutions, namely that of scaled Gaussian distributions on $\R^d$. Namely, we consider the family of scaled Gaussian distributions on $\R^d$ given by 
\[
\Uppsi( \Sigma, m, \kappa)(x) := \kappa \,\sfG(\Sigma, m) (x) \quad \text{with}\quad 
\sfG(\Sigma,m;x) := \frac{\exp\!\big({-}\frac12 (x{-}m)\cdot \Sigma^{-1} (x{-}m)
  \big)}{\sqrt{\det(2\pi\Sigma)}}\, ,
\]
where $\kappa$ is the total mass, $m$ the mean value, and $\Sigma\in \R^{d\ti d}_\spd:= \{A\in \R^{d\ti d}\,|\, A=A^\top, A >0\}$ the covariance of the normalized Gaussian $\sfG(\Sigma,m)$, namely
\[
\kappa = \int_{\R^d} \Uppsi(\Sigma,m,\kappa) \dd x , \quad \kappa m= \int_{\R^d}
\! x \, \Uppsi(\Sigma,m,\kappa) \dd x, \quad 
\kappa \Sigma = \int_{\R^d}\! x{\otimes} x\, \Uppsi(\Sigma,m,\kappa) \dd x.
\]
We denote the manifold of all scaled and normalized Gaussians by 
\begin{align*}
\mfG &:= \bigset{ \kappa \,\sfG(\Sigma,m; \cdot) }{ \Sigma \in \R^{d\ti d}_\spd ,\
  m\in \R^d,\ \kappa>0 } \subset \calM(\R^d),\\
\mfG_1 &:= \bigset{ \sfG(\Sigma,m; \cdot) }{\Sigma \in \R^{d\ti d}_\spd,\ m\in \R^d}  
\subset \calP(\R^d).
\end{align*}
Setting $\mfP:=\R^{d\ti d}_\spd \ti \R^d \ti (0,\infty)$ and $\mfP_1:=\R^{d\ti d}_\spd \ti \R^d$, we see that the maps $\Uppsi\colon \mfP\to\mfG$, $\sfG\colon \mfP_1\to \mfG_1$ are smooth parametrizations of $\mfG$ and $\mfG_1$, respectively. Therefore, $\mfG$ is a smooth manifold of dimension $N_d:=1+d+d(d{+}1)/2=(d{+}2)(d{+}1)/2$, and $\mfG_1$ a smooth manifold of dimension $N_d-1$. 

The main observation is that if the equilibrium measure $\target$ is a scaled Gaussian, then the manifold of scaled Gaussians $\mfG$ is invariant under the evolution \eqref{eq:I.HK.PDE}. More precisely, suppose that $\target\in \mfG$ and $\rho(0)\in \mfG$. Then, the unique solution $t\mapsto \rho(t)$ to the gradient-flow equation \eqref{eq:I.HK.PDE} satisfies $\rho(t) \in \mfG$ for all $t>0$ (cf.\ Proposition~\ref{pr:Gauss.Sol}). This invariance allows us to derive a reduced gradient system $(\mfP,\sfE,\bbK_{\alpha,\beta}^\mathrm{red})$ on the parameter space $\mfP$ by restricting the gradient structure $(\calM(\R^d), \calE, \HK_{\alpha,\beta})$ to the Gaussian manifold $\mfG$ (cf.\ Theorems~\ref{th:ReductRiem} and \ref{th:ReductOnsag}).

\paragraph{Overview of the paper}
In Section \ref{se:DerivODE}, we first derive the ODE systems for the parameters
$\sfp=(\Sigma,m,\kappa)$ when the target $\target$ is a scaled Gaussian. This  derivation can be done by
simply matching coefficients when using the fact that all scaled Gaussians 
are given as the exponential of a quadratic function in $x\in \R^d$. 
A first observation is that the vector field  defining the ODE is still linear in the two parameters $\alpha$ and $\beta$ cf.\ \eqref{eq:ODE.mfP}. 

In Section \ref{se:DerivGradStructure}, we derive the gradient structure for the
ODE system. While the reduced energy functional is simply obtained by
evaluation of $\calH_\rmB$ on Gaussians, the reduction of the explicitly given
Onsager operator is nontrivial, and we follow the approach of
\cite[Sec.\,6.1]{MaaMie20MCRS}.
We obtain the useful formula for the reduced Onsager operator in the form of
\begin{align}
	\begin{aligned}
	\bbK_{\alpha,\beta}^\mathrm{red}(\sfp) &= \alpha \bbK_\Ot^\mathrm{red}(\sfp) + \beta \bbK_{\He}^\mathrm{red}(\sfp), \quad \text{where} \\
	\bbK_\Ot^\mathrm{red}(\Sigma,m,\kappa) &= \!\bma{ccc}\frac2{\kappa}\big(
\Box \Sigma + \Sigma \Box\big) &0&0 \\ 0 & \frac1\kappa\,\Box & 0\\ 0&0& 0 \ema, \\
\bbK_{\He}^\mathrm{red}(\Sigma,m,\kappa) &= \!\bma{ccc}\frac2\kappa\, \Sigma \Box
\Sigma &0&0 \\ 0 & \frac1\kappa\,\Sigma \Box& 0\\ 0&0& \kappa \Box\ema.
	\end{aligned}
\end{align}
This crucial formula allows us to easily make various calculations for the rest of the paper.

In Section \ref{se:GaussProba}, we consider the gradient systems 
\[ 
 (\calP(\R^d),\calH_\rmB(\cdot|\target) , \SHK_{\alpha,\beta}) ,
\]
where $\calP(\R^d)$ denotes the set of all probability measures on $\R^d$. The
spherical Hellinger-Kantorovich distance $\SHK_{\alpha,\beta}$ is  given in
terms of the Onsager operator 
\[
\bbK^{\SHK}_{\alpha,\beta}(\rho)\eta= - \alpha \DIV\!\big(\rho\nabla 
\eta\big) + \beta \rho\big( \eta -{\ts \int_{\R^d} \rho \eta\dd x}\big),
\]
which leads to the associated gradient-flow equation
\begin{align}
 \label{eq:I.SHK.PDE}
\dot \rho &= \alpha \DIV\!\big( \nabla \rho + \frac{\rho}{\target}\nabla\target\big) 
 - \beta \rho\,\Big( \!\log\big(\frac{\rho}\target\big)
 -\int_{\R^d}\rho\log\big(\frac{\rho}\target\big) \dd x  \Big).
\end{align}
The solutions of this equation are closely related to the gradient-flow
equation \eqref{eq:I.HK.PDE} for the gradient system $
(\calM(\R^d),\calH_\rmB(\cdot|\target) , \HK_{\alpha,\beta})$, see
\cite{mielke2025hellinger}: 
 If $t \mapsto \uu(t)$ solves \eqref{eq:I.HK.PDE}, then $t\mapsto \rho(t)=
  \tdfrac1{z(t)} \uu(t)$ with $z(t)=\int_{\R^d} \uu(t,x)\dd x$ solves
  \eqref{eq:I.SHK.PDE}. Moreover, if $t\mapsto \rho(t)$ solves
  \eqref{eq:I.SHK.PDE}, then $t \mapsto \uu(t)=\kappa(t) \rho(t)$ solves
  \eqref{eq:I.HK.PDE} for suitable functions $t\mapsto \kappa(t)$.

This relation shows that \eqref{eq:I.SHK.PDE} also preserves the Gaussian form. The reduced
evolution for $(\Sigma,m)$
directly corresponds to the evolution of scaled Gaussians. 

In Section~\ref{se:GeodesicConvexity},
we investigate geodesic $\uplambda$-convexity for the finite-dimensional 
gradient system $(\mfP_1, \sfE_1, \bbK^\mathrm{red}_{\alpha,\beta})$ associated 
with normalized Gaussian measures and relative Boltzmann entropy
$\sfE_1(\sfp)= \sfH(\sfp| \Gamma,n)$, where $\sfp=(\Sigma,m)$. Geodesic convexity 
is characterized via the metric Hessian $\HESS_{\bbK}\sfF$, with the key condition
\[
\langle \xi, \HESS_{\bbK} \sfF(\sfq)\, \xi \rangle \geq \uplambda \langle \xi, \bbK(\sfq)\, \xi \rangle,
\]
expressing $\uplambda$-convexity along geodesics. We show that global geodesic 
convexity holds \emph{if and only if} $\beta = 0$ and $\uplambda \leq \alpha\,
\nu_{\min}(\Gamma^{-1})$, i.e., the case of pure Otto-Wasserstein.
For $\beta > 0$, we show that geodesic convexity fails for the Gaussian 
probability measures. However, 
\emph{sublevel geodesic semi-convexity}~\eqref{eq:SublGLCvx} still holds 
for the relative Boltzmann entropy $\sfE_1(\sfp)$.

In Section~\ref{se:LongTime}, we analyze the \emph{long-time decay behavior} 
of solutions to the $\HK_{\alpha,\beta}$-Boltzmann gradient flow over 
Gaussian measures. Our first key tool is the \emph{Polyak--\L ojasiewicz (P\L)}
functional inequality, which enables exponential convergence rates even in 
the absence of geodesic sublevel semi-convexity.

In Section~\ref{su:PolLojEstim}, we establish both \emph{global} and \emph{sublevel P\L} inequalities for the gradient flow energy functional $\sfE_1$, using a decomposition into \emph{covariance} and \emph{mean} parts. This yields exponential decay estimates of the form:
\[
\sfE_1(\sfp(t)) \leq \ee^{-\mfc_\mafo{cov}t} \sfH_\mafo{cov}(\Sigma(0)) + \ee^{-\mfc_\rmm t} \sfH_\rmm(m(0)),
\]
with rates $\mfc_\mafo{cov}, \mfc_\rmm$ that depend explicitly on $\alpha, \beta$, and the energy level $\sfE_1(\sfp(0))\leq E$.

A \emph{refined analysis} follows, where the decay is sharpened by tracking the evolution of eigenvalues of the normalized covariance matrix $B = \Gamma^{-1/2}\Sigma\Gamma^{-1/2}$. This leads to improved decay rates:
\[
\sfH_\mafo{cov}(\Sigma(t)) \le C \, \ee^{-\upnu_\mafo{cov} t}, \quad
\sfH_\rmm(m(t)) \le C \, \ee^{-\upnu_\rmm t},
\]
with rates $\upnu_\mafo{cov} = 2\alpha \nu_\mafo{min}(\Gamma^{-1}) + \beta$ and $\upnu_\rmm = 2\alpha \nu_\mafo{min}(\Gamma^{-1}) + 2\beta$,
which are independent of the energy level (initial condition) $E$.
Here, $\alpha$ is again the scaling parameter of the Otto-Wasserstein geometry and $\beta$ that of the Hellinger (Fisher-Rao) geometry. We see that while the Otto-Wasserstein part of the decay rate is affected by $\nu_\mafo{min}(\Gamma^{-1})$, the Hellinger part is not.

Finally, the results are generalized to \emph{log-$\uplambda$-concave target 
measures} $\target$, which are not necessarily scaled Gaussians. In this case,
exponential decay is still achieved via energy-dissipation inequalities 
involving the metric slope and Gr\"onwall's lemma, yielding a exponential 
decay result, which matches standard results for the pure Wasserstein and 
Hellinger settings in the 
literature~\cite{chenGradientFlowsSampling2023, 
lambertVariationalInferenceWasserstein2022}.

In Section~\ref{sec:NumericalAlgorithms}, we introduce a discrete-time gradient 
descent algorithm tailored to the $\HK_{\alpha,\beta}$-Boltzmann gradient flow 
on Gaussian measures. This method approximates the continuous-time evolution 
given by equation~\eqref{eq:HK.Gauss.mass-shape-ode-all} by interleaving three 
key updates: (i) a Fisher-Rao step (spherical Hellinger over Gaussian probability measures), which updates the covariance and mean using Euler discretization of system~\eqref{eq:Fisher-Rao-Gaussian}; (ii) a Bures-Wasserstein step (Otto-Wasserstein over Gaussian probability measures), which discretizes the Bures-Wasserstein ODE~\eqref{eq:Bures-Wasserstein-ODE}; and (iii) a mass update step induced by the Hellinger dissipation. The resulting Algorithm~\ref{alg:JKO} incorporates Monte Carlo estimation to handle non-Gaussian target measures and is applied to both Gaussian target flow and Bayesian logistic regression. Numerical results show interesting convergence behavior when compared to pure Wasserstein or Fisher–Rao flows.
For example, when the initial measure is far away from the target, the Bures-Wasserstein iterations make quick progress, while the Fisher-Rao iterations stagnate due to inaccurate covariance estimation. However, asymptotically, Fisher-Rao iterations enjoy faster convergence near equilibrium.

\section{ODE system for \texorpdfstring{$(\Sigma,m,\kappa)$}{(Si,m,ka)}}
\label{se:DerivODE}

\subsection{Working with a simpler parametrization} 
\label{su:SimplerCoord}

We begin our discussion with a simpler parametrization for $\mfG$ that avoids the inversion of the covariance matrix. Observe that scaled Gaussians can be alternatively expressed as
\[
\rho(x)= \Upphi(A,b,c)(x) :=\exp(B_\sfq(x)), \quad \text{where}\quad B_\sfq(x)=
c+ b\cdot x-\frac12 x\cdot Ax .
\]
Note that by using the simple coordinates $\sfq=(A,b,c)\in \mfQ:=\R^{d\ti d}_\spd\ti \R^d \ti \R$ instead of $\mfP$, we are no longer using the \emph{usual} variables $(\Sigma,m,\kappa)\in\mfP$ for parametrizing Gaussians. Nevertheless, the parametrization $\Upphi: \mfQ \to \mfG$ is still a diffeomorphism.

To return to $(\Sigma, m,\kappa)\in \mfP \EEE$, 
we have the transformation $\sfT:\mfQ\to \mfP$ with
\begin{equation}
  \label{eq:trafoQP}
  \Sigma = A^{-1}, \quad m= A^{-1} b, \quad \kappa = \frac{(2\pi)^{d/2}}{ (\det
    A)^{1/2}} \, \exp\big( c+\frac12 b\cdot A^{-1} b\big). 
\end{equation}
The parametrization
$(-\frac12 A, b)=(-\frac12 \Sigma^{-1}, \Sigma^{-1} m)$
is also referred to as the \emph{natural parametrization} of the (normalized) Gaussian measures \cite{amariMethodsInformationGeometry2000}.
Applying the transformation $\rho(t,x)=\ee^{B(t,x)}$ to
\eqref{eq:I.HK.PDE} with $\target(x)= \ee^{\ol B(x)}$, we obtain
\begin{equation}
  \label{eq:Hopf.GFE}
  \dot B = \alpha\big( \Delta(B{-}\ol B) + \nabla B\cdot \nabla(B{-} 
   \ol B)\big) -   \beta\,(B- \ol B). 
\end{equation}
We can easily see that for bilinear $\ol B$ and bilinear initial condition $B(0,\cdot)$, there is a bilinear solution given explicitly by
\[
B(t,x)=B_{\sfq(t)}(x) = c(t) + b(t)\cdot x -\frac12 x \cdot A(t) x ,
\]
where $t\mapsto \sfq(t) =(A(t),b(t),c(t)) \in \mfQ$ satisfies the following system of ODEs:
\begin{subequations}
\label{eq:ODEs.mfQ}
\begin{align}
\label{eq:ODEs.mfQ.A}
\dot A&= \alpha \big(\, \ol A A + A\ol A - 2 A^2 \big) + \beta( \ol A{-} A),
\\
\dot b&=  \alpha \big( A \ol b + \ol A b - 2 A b \big) + \beta \,( \ol b - b),
\\
\dot c&= \alpha\big( \trace(\, \ol A{-} A) + b\cdot( b{-} \ol b)\big) 
 + \beta \,( \ol c - c)
\end{align}
\end{subequations}
subject to the initial condition $\sfq(0)=\sfq_0=(A_0,b_0,c_0)\in \mfQ$.

The global existence of the system \eqref{eq:ODEs.mfQ} follows from the standard theory of nonlinear differential equations. For completeness, a discussion on the well-posedness of \eqref{eq:ODEs.mfQ} is performed in Appendix~\ref{se:ODEs.mfQ}. We summarize the current findings in the following. 

\begin{proposition}[Gaussian solutions for \eqref{eq:I.HK.PDE}]
\label{pr:Gauss.Sol} The unique solution $t\to \rho(t)$ of equation \eqref{eq:I.HK.PDE}
with initial condition $\rho(0,\cdot)= \Upphi(\sfq_0) \in \mfG\subset
\calM(\R^d)$  is given by $\rho(t,\cdot)= \Upphi(\sfq(t))$, where $t\mapsto \sfq(t) \in
\mfQ$ is the unique solution of \eqref{eq:ODEs.mfQ} with $\sfq(0)=\sfq_0\in\mfQ$. 
\end{proposition}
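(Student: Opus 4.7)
The plan is to verify the stated correspondence by direct substitution plus a uniqueness argument. First I would rewrite \eqref{eq:I.HK.PDE} in the variable $B = \log \rho$ (with $\bar B = \log \target$), obtaining equation \eqref{eq:Hopf.GFE}. This is a short computation using $\dot\rho = \rho\dot B$, $\nabla \rho = \rho \nabla B$, and the identity $\rho \nabla \log(\rho/\target) = \rho\nabla(B-\bar B)$ so that $\DIV(\rho \nabla \log(\rho/\target)) = \rho\bigl(\Delta(B-\bar B) + \nabla B\cdot \nabla(B-\bar B)\bigr)$.

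Next I would insert the bilinear ansatz $B(t,x) = B_{\sfq(t)}(x) = c(t) + b(t)\cdot x - \tfrac12 x\cdot A(t)x$, and likewise write $\bar B(x) = \bar c + \bar b\cdot x - \tfrac12 x\cdot \bar A x$ coming from $\target=\Upphi(\bar A,\bar b,\bar c)\in\mfG$. The crucial observation is that the right-hand side of \eqref{eq:Hopf.GFE} stays quadratic in $x$: since $\nabla(B-\bar B) = (b-\bar b) - (A-\bar A)x$ is affine in $x$ and $\nabla B = b - Ax$ likewise, the product $\nabla B\cdot\nabla(B-\bar B)$ is quadratic; $\Delta(B-\bar B) = -\trace(A-\bar A)$ is constant; and the Hellinger term $-\beta(B-\bar B)$ is again quadratic. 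Thus the bilinear manifold is formally invariant, and it remains to match the constant, linear-in-$x$, and symmetric-quadratic-in-$x$ coefficients on both sides. Equating coefficients of $-\tfrac12 x\cdot(\cdot)x$ yields the $\dot A$ equation (with the symmetrization $\bar A A + A\bar A - 2 A^2$ reflecting that only the symmetric part of $A\bar A$ enters); equating coefficients of $x$ yields the $\dot b$ equation; equating constants yields the $\dot c$ equation. This reproduces \eqref{eq:ODEs.mfQ} exactly.

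With the ODE system in hand, I would invoke the well-posedness result for \eqref{eq:ODEs.mfQ} on $\mfQ$ (Appendix~\ref{se:ODEs.mfQ}), which provides a unique global solution $t\mapsto \sfq(t)\in\mfQ$ with $A(t)\in\R^{d\ti d}_\spd$ for all $t\ge 0$. Consequently $t\mapsto \Upphi(\sfq(t))$ is a well-defined classical trajectory in $\mfG$ solving \eqref{eq:I.HK.PDE} pointwise.

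The final step is to upgrade this to ``the'' solution of \eqref{eq:I.HK.PDE}. Here I would appeal to uniqueness of solutions to the HK--Boltzmann gradient-flow equation for the given (smooth, rapidly decaying) initial datum — either via the general EDI/curve-of-maximal-slope uniqueness in $(\calM(\R^d),\HK_{\alpha,\beta})$ or, more directly, via uniqueness of the log-transformed equation \eqref{eq:Hopf.GFE} in a class containing both the bilinear solution and any competitor. The bilinear solution then coincides with the unique PDE solution, giving $\rho(t,\cdot) = \Upphi(\sfq(t))$ for all $t>0$. I expect the only real subtlety to be this last uniqueness/identification step: the construction and the coefficient matching are mechanical, but formally justifying that no non-Gaussian solution branches off requires either citing a uniqueness theorem for \eqref{eq:I.HK.PDE} in a sufficiently large class, or (equivalently) a Gronwall-type argument at the level of \eqref{eq:Hopf.GFE} between two candidate solutions agreeing at $t=0$.
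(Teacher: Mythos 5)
Your proposal follows essentially the same route as the paper: the logarithmic substitution $\rho=\ee^{B}$ yielding \eqref{eq:Hopf.GFE}, the bilinear ansatz with coefficient matching to recover \eqref{eq:ODEs.mfQ}, and the appeal to Appendix~\ref{se:ODEs.mfQ} for global well-posedness of the ODE system. In fact you are somewhat more careful than the paper, which leaves the final PDE-level uniqueness step (ruling out non-Gaussian solutions branching off the same initial datum) implicit rather than citing a uniqueness theorem for \eqref{eq:I.HK.PDE}; your identification of this as the only genuine subtlety is accurate.
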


\subsection{Returning to standard coordinates} 
\label{su:ReturnCoord}

We now use the transformation \eqref{eq:trafoQP} to generate the ODEs in the parameter space $\mfP$. 

\begin{proposition}[Induced GFE on $\mfP$]
The Gaussian solutions 
for \eqref{eq:I.HK.PDE} are uniquely determined in the form $\rho(t) =
\kappa (t)\sfG(\Sigma(t),m(t))$, where $t\mapsto \sfp(t)=(\Sigma(t), m(t), \kappa(t))\in\mfP$ solves
the following system of ODEs:
\begin{subequations}
\label{eq:ODE.mfP}
\begin{align}
\dot \Sigma&= \alpha \big(2 I - \Gamma^{-1}\Sigma - \Sigma \Gamma^{-1} \big) 
 + \beta\big( \Sigma {-}\Sigma \Gamma^{-1} \Sigma \big),\label{eq:ODE.mfP.sigma}
\\[0.3em]
\dot m&=  \alpha \, \Gamma^{-1} (n - m ) +
\beta \,\Sigma \Gamma^{-1}(n- m),
\\
\dot \kappa &=  \beta \,\frac\kappa2 \left( \trace( I{-}
  \Gamma^{-1}\Sigma) -(n{-}m)\cdot \Gamma^{-1}(n{-}m)
  -2\log\Big(\frac\kappa\varkappa\Big) +
  \log\Bigl(\frac{\det\Sigma}{\det\Gamma}\Bigr) \right),
\end{align}
where $\target=\varkappa \sfG(\Gamma,n)$ is the target measure. 
\end{subequations}
\end{proposition}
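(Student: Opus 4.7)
The plan is to combine Proposition~\ref{pr:Gauss.Sol} with the explicit diffeomorphism $\sfT\colon \mfQ\to \mfP$ from \eqref{eq:trafoQP}. By Proposition~\ref{pr:Gauss.Sol}, any Gaussian solution of \eqref{eq:I.HK.PDE} is of the form $\rho(t,\cdot)=\Upphi(\sfq(t))$ with $\sfq(t)=(A(t),b(t),c(t))$ solving \eqref{eq:ODEs.mfQ}. Setting $\sfp(t):=\sfT(\sfq(t))$ gives the unique representation $\rho(t)=\kappa(t)\,\sfG(\Sigma(t),m(t))$, so the task reduces to differentiating the three components of $\sfT$ along \eqref{eq:ODEs.mfQ} and re-expressing $(\ol A,\ol b,\ol c)=\sfT^{-1}(\Gamma,n,\varkappa)$ in terms of the target data, namely $\ol A=\Gamma^{-1}$, $\ol b=\Gamma^{-1}n$, and
\[
\ol c = \log\varkappa - \tfrac{d}{2}\log(2\pi) - \tfrac12\log\det\Gamma - \tfrac12\, n\cdot \Gamma^{-1}n.
\]

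For the covariance, I would differentiate $\Sigma=A^{-1}$, obtaining $\dot\Sigma=-\Sigma\dot A\Sigma$, and substitute \eqref{eq:ODEs.mfQ.A}. Using $\Sigma A=I$ the quadratic term $-2A^2$ produces $2I$, the cross terms $\ol A A+A\ol A$ give $-\Sigma\Gamma^{-1}-\Gamma^{-1}\Sigma$, and the $\beta$-part $\ol A-A$ yields $\Sigma-\Sigma\Gamma^{-1}\Sigma$, matching \eqref{eq:ODE.mfP.sigma}. For the mean, write $m=\Sigma b$ so that $\dot m=\dot\Sigma\, b+\Sigma\dot b$; inserting the $\dot b$-equation and using $\Sigma b=m$, $\Sigma\ol b=\Sigma\Gamma^{-1}n$ together with the already computed $\dot\Sigma$ shows that all $A$-quadratic contributions collapse into the compact form $\alpha\,\Gamma^{-1}(n-m)+\beta\,\Sigma\Gamma^{-1}(n-m)$.

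The mass component is the real work. Logarithmic differentiation of $\kappa$ gives
\[
\frac{\dot\kappa}{\kappa}=-\tfrac12\trace(\Sigma\dot A)+\dot c+\dot b\cdot\Sigma b-\tfrac12\, b\cdot \Sigma\dot A\Sigma b.
\]
After plugging in \eqref{eq:ODEs.mfQ}, I would first verify that the $\alpha$-block vanishes identically — this is the expected consistency check, reflecting that the Otto--Wasserstein part of $\HK_{\alpha,\beta}$ preserves total mass. The $\beta$-block gives $-\tfrac12\trace(\Sigma(\ol A-A))+(\ol c-c)+(\ol b-b)\cdot m-\tfrac12\, b\cdot\Sigma(\ol A-A)\Sigma b$; using $\Sigma b=m$, $\ol A=\Gamma^{-1}$, and $\ol b=\Gamma^{-1}n$, the three non-trace terms assemble into the quadratic form $(n-m)\cdot\Gamma^{-1}(n-m)$ (up to the $\ol c-c$ piece). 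Finally, substituting the explicit expressions for $c$ and $\ol c$ in terms of $(\kappa,\Sigma,m)$ and $(\varkappa,\Gamma,n)$ rewrites $\ol c-c$ as $-\log(\kappa/\varkappa)+\tfrac12\log(\det\Sigma/\det\Gamma)+\tfrac12\, m\cdot\Sigma^{-1}m-\tfrac12\, n\cdot\Gamma^{-1}n$, so that after combination one recovers exactly the bracket in the stated $\dot\kappa$-equation.

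The main obstacle is this last cancellation for $\dot\kappa$: one must simultaneously track the trace term, the quadratic forms in $b$ and $\ol b$, and the highly non-linear dependence of $c$ on $(\Sigma,m,\kappa)$, and observe that all extra quadratic-in-$b$ pieces combine cleanly into $(n-m)\cdot\Gamma^{-1}(n-m)$ via the identity $b\cdot\Sigma b=m\cdot\Sigma^{-1}m$. Once the $\alpha$-terms are shown to vanish and the $\beta$-terms are organised with this identity, the announced system \eqref{eq:ODE.mfP} follows. Uniqueness is inherited from Proposition~\ref{pr:Gauss.Sol} together with the fact that $\sfT$ is a diffeomorphism.
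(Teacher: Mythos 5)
Your proposal is correct and follows essentially the same route as the paper: both transport the $\mfQ$-system \eqref{eq:ODEs.mfQ} through the diffeomorphism \eqref{eq:trafoQP}, with the only real work being the logarithmic differentiation of $\kappa$ and the verification that the $\alpha$-contributions cancel (mass conservation of the transport part) while the $\beta$-terms reassemble into the stated bracket via $b\cdot\Sigma b=m\cdot\Sigma^{-1}m$. You merely carry out in the pure $(A,b,c)$-coordinates what the paper does with the mixed representation $\kappa=\sqrt{\det(2\pi\Sigma)}\,\exp\bigl(c+\tfrac12 b\cdot m\bigr)$; the computations are identical in substance and your cancellations check out.
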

Note that in the equation for $\dot\kappa$, the contribution proportional to 
$\alpha $ vanishes as the transport term is mass preserving. \smallskip

\noindent\begin{proof}
The equations for $\dot\Sigma$ and $\dot m$ follow by straightforward
substitution. The equation for $\dot\kappa$ is obtained by observing the relation $\kappa = \sqrt{\det (2\pi\Sigma)} \exp( c+\frac12 b\cdot
m) $ and $D (\det)(\Sigma)[A] = \trace\big(\mafo{cof}(\Sigma) A\big) = (\det
\Sigma) \,\trace\big(\Sigma^{-1}A\big)$, giving 
\[
\dot \kappa = \frac{\kappa}{2} \,\Big(\trace\big(\Sigma^{-1} \dot \Sigma\big) +
\dot c + \big(\dot b\cdot m + b\cdot \dot m \big)  \Big). 
\] 
Inserting the established relations gives the desired result. 
\end{proof}
Several remarks are in order.
\begin{remark}[Explicit solutions]
\label{re:Explicit.A(t)}
It turns out that it is possible to solve equation~\eqref{eq:ODEs.mfQ.A} and, consequently, \eqref{eq:ODE.mfP.sigma} explicitly. 

Indeed, consider the transformation $A(t)=\ol A + \ee^{-Ct}B(t)\ee^{-C
  t}$ with $C= \alpha \ol A+\frac\beta2 I$, which yields the equation $\dot B = -2\alpha B \ee^{-2Ct} B$ for $B$. Setting $D=B^{-1} $ leads to $\dot D= 2\alpha \ee^{-2Ct}$. Solving for $D$ then gives
\[
	D(t)= D(0)+\alpha C^ {-1}(I{-}\ee^{-2Ct})=  D(0)+\alpha C^{-1} - \alpha \ee^{-Ct}C^{-1} \ee^{-Ct}.
\]
From this, we obtain the explicit solution $A$ to \eqref{eq:ODEs.mfQ.A} 
in the form
\[
	A(t) = \ol A + \ee^{-C t}\big( (A_0{-}\ol A)^{-1} {+} \alpha C^{-1} {-} \alpha \ee^{-Ct}C^{-1} \ee^{-Ct}\big)^{-1} \ee^{-Ct} \quad \text{with}\quad C= \alpha \ol A+\tfrac\beta2 I,
\]
and therefore also an explicit form for $\Sigma=A^{-1}$.

When $\alpha\beta=0$, the expressions simplify: In the pure transport case ($\beta=0$), we have
\[
\Sigma(t)= \Gamma + \ee^{-\alpha t \Gamma^{-1}}\bigl(\Sigma_0- \Gamma\bigr)
\ee^{- \alpha t \Gamma^{-1}},\quad A(t) = \ol A \Big( \ol A +
\ee^{-\alpha t \ol A} \big( \ol A \,A_0^{-1}\ol A - \ol A\big) \ee^{-\alpha t
  \ol A} \Big)^{-1} \ol A.
\]
While in the pure growth case ($\alpha = 0$), we have 
\[
A(t) = \ol A + \ee^{-\beta t}( A_0- \ol A) \quad \text{or} \quad 
\Sigma(t) = \Gamma\Big( \Gamma + \ee^{-\beta t} \big( \Gamma \Sigma_0^{-1}
\Gamma - \Gamma\big)\Big)^{-1} \Gamma. 
\]
\end{remark}

\begin{remark}[Long-time asymptotics]\label{re:Explicit.A(t).decay}
Observe that $A(t)$ converges to $\ol A$ with
\[
	A(t)= \ol A + \ee^{-C t} B_* \ee^{-Ct} + O(|\ee^{-Ct}|^4)\quad 
    \text{for $t\to \infty$.}
\]
In the later sections, we are interested in the long-time asymptotics for 
$\Sigma= A^{-1}$ with $\ol A=\Gamma^{-1}$. Hence, we find the expansion 
\begin{equation}
    \label{eq:SigmaConverges}
    \Sigma(t) = \Gamma - \Gamma \ee^{-C t} B_* \ee^{-C t} \Gamma + 
    O(|\ee^{-Ct}|^4) \quad \text{for $t\to \infty$}, \quad \text{with}\quad C=\alpha\Gamma^{-1}+\tfrac\beta2I.
\end{equation}
In particular, we have the decay estimate $|\Sigma(t){-} \Gamma|\leq C 
\ee^{-\nu_\mafo{opt} t} $ with the optimal decay rate 
$\nu_\mafo{opt} = \beta+2\alpha \nu_{\min}(\Gamma^{-1}) >0$ given as smallest 
eigenvalue of $2\alpha \Gamma^{-1} {+} \beta I$. 
\end{remark}

\section{Reduced gradient system}
\label{se:DerivGradStructure}

\subsection{General reduction of gradient structures}

Reducing a gradient system to an invariant submanifold is trivial when
the Riemannian tensor is given. We summarize the result in the following
statement. 

\begin{theorem}[Reduction of Riemannian GS]\label{th:ReductRiem}
Let the gradient system $(\calM,\calE,\bbG)$ be given in terms of a family of symmetric and positive definite Riemannian operators $\bbG(u): \rmT_u\calM\to \rmT^*_u \calM$. 
Assume that the smooth submanifold $\calN\subset \calM$ is invariant under the 
gradient flow $\bbG(u)\dot u =-\rmD\calE(u) \in \rmT^*_u \calM$ (where 
$\rmD \calE$ denotes the classical Fr\'echet derivative). 

Then, the restricted flow on $\calN$ is again a gradient system and it is given 
by the reduced gradient system $(\calN,\calE|_\calN, \bbG^\mathrm{red})$, where 
$\bbG^\text{red}$ is simply the restriction of $\bbG$ to $\calN$, namely
\[
\forall\,w\in \calN,\ \forall\, v \in \rmT_w\calN: \quad 
\langle \bbG^\mathrm{red}(w)v,v \rangle_{\rmT_w\calN} = \langle \bbG(w)v,v
\rangle_{\rmT_w\calM}. 
\]
\end{theorem}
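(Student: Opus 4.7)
The plan is to verify, directly from the defining properties, that the restriction to $\calN$ of any solution of the gradient-flow equation on $\calM$ satisfies the gradient-flow equation of the reduced system, after checking that $(\calN,\calE|_\calN,\bbG^\mathrm{red})$ is itself a bona fide gradient system. The argument is essentially a definitional unpacking, so I would break it into three short steps.

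First, I would check that $\bbG^\mathrm{red}(w)$ is well defined as a symmetric positive-definite operator $\rmT_w\calN\to \rmT_w^*\calN$. Polarizing the quadratic form in the statement gives a symmetric bilinear form on $\rmT_w\calN\times\rmT_w\calN$ inherited from $\bbG(w)$, which is positive definite because $\rmT_w\calN\subset\rmT_w\calM$ and $\bbG(w)$ is positive definite; the Riesz representation with respect to this bilinear form then produces the operator $\bbG^\mathrm{red}(w)$. Consequently the reduced gradient-flow equation $\bbG^\mathrm{red}(w)\dot w=-\rmD(\calE|_\calN)(w)$ has a unique local solution from every initial point $w_0\in \calN$.

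Second, let $u(\cdot)$ be the unique solution of $\bbG(u)\dot u=-\rmD\calE(u)$ with $u(0)\in\calN$. By the invariance hypothesis, $u(t)\in\calN$ for all $t$, hence $\dot u(t)\in\rmT_{u(t)}\calN$. Testing the equation on $\calM$ against an arbitrary $v\in \rmT_{u(t)}\calN\subset \rmT_{u(t)}\calM$ gives
\[
\langle \bbG(u)\dot u,v\rangle_{\rmT_u\calM}
= -\langle \rmD\calE(u),v\rangle.
\]
Since both $\dot u$ and $v$ lie in $\rmT_u\calN$, the polarized form of the definition of $\bbG^\mathrm{red}$ identifies the left-hand side with $\langle \bbG^\mathrm{red}(u)\dot u,v\rangle_{\rmT_u\calN}$. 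On the right-hand side, the chain rule applied to $\calE|_\calN=\calE\circ\iota_\calN$ (with $\iota_\calN$ the inclusion) yields $\langle \rmD\calE(u),v\rangle=\langle \rmD(\calE|_\calN)(u),v\rangle$ for every $v\in\rmT_u\calN$. Varying $v$ therefore gives exactly the reduced gradient-flow equation on $\calN$.

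Third, I would close the loop by invoking uniqueness of solutions in both systems (guaranteed by the positive definiteness of $\bbG$ and $\bbG^\mathrm{red}$, respectively) to conclude that the restricted flow and the reduced gradient flow coincide on $\calN$. I do not anticipate a genuine obstacle: all of the substantive content — that trajectories stay on $\calN$ — is absorbed into the invariance hypothesis, and the only point deserving minor care is the passage from the quadratic-form definition of $\bbG^\mathrm{red}$ to its interpretation as an operator, handled cleanly by polarization.
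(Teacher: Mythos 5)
Your argument is correct and is precisely the ``trivial'' definitional unpacking the paper has in mind: it states this theorem without proof, remarking only that reduction is immediate when the Riemannian tensor (rather than the Onsager operator) is given, and the explicit formula $\bbG^\mathrm{red}(w)=\rmD j(w)^*\bbG(j(w))\rmD j(w)$ it records afterwards is exactly your polarized restriction. Your three steps --- polarization to define the restricted metric, testing the flow equation against $v\in\rmT_w\calN$ using $\dot u\in\rmT_u\calN$ from invariance, and the chain rule for $\calE\circ j$ --- supply the details the paper omits, with no gap.
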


To clarify the situation with the next result, it is helpful to introduce a 
notation for the embedding of $\calN$ into $\calM$, namely $j:\calN\to \calM$. 
Now, $\rmD j(w): \rmT_w \calN \to \rmT_{j(w)}\calM$ and $\rmD j(w)^* : 
\rmT^*_{j(w)} \calM \to \rmT^*_w\calN$ (again $\rmD$ denotes Fr\'echet 
derivatives). With this, $\bbG^\text{red}$ can be expressed more explicitly 
in the form  
\[
\bbG^\text{red}(w) = \rmD j(w)^* \bbG(j(w)) \rmD j(w): \rmT_w \calN \to
\rmT^*_w \calN. 
\]
The situation is slightly more involved if the Onsager operators 
$\bbK(u)=\bbG(u)^{-1}$ are given instead, and the inversion is not desirable. 
On the formal level, the reduction means taking a Schur complement, 
see Remark \ref{re:Schur}. 

\begin{theorem}[Reduction of Onsager GS]\label{th:ReductOnsag} 
Let the gradient system $(\calM,\calE,\bbK)$ be given in terms of a family 
of symmetric and positive definite Onsager operators $\bbK(u): \rmT^*_u 
\calM\to \rmT_u \calM$. Assume that the smooth submanifold $\calN\subset 
\calM$ is invariant under the gradient flow $\dot u =-\bbK(u)\rmD\calE(u)$. 

Then, the restricted flow on $\calN$ is again a gradient system, and it is 
given by the reduced gradient system $(\calN,\calE|_\calN, \bbK^\mathrm{red})$, 
where the reduced Onsager operator is given by 
\[
\forall\,w\in \calN,\ \forall\, \eta \in \rmT^*_w\calN: \quad 
\langle \eta, \bbK^\mathrm{red}(w)\eta\rangle_{\rmT_w\calN} = \inf\bigset{\langle
\xi,\bbK(j(w))\xi\rangle_{\rmT_{j(w)}\calM}} { \rmD j(w)^*\xi = \eta},
\]
where $j:\calN\to \calM$ is the embedding mapping. 
\end{theorem}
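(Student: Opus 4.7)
The plan is to reduce the Onsager statement to the already-proven Riemannian statement (Theorem~\ref{th:ReductRiem}) by treating $\bbK$ as the formal inverse of a Riemannian operator $\bbG:=\bbK^{-1}$. Writing $L:=\rmD j(w):\rmT_w\calN\to \rmT_{j(w)}\calM$, Theorem~\ref{th:ReductRiem} yields the reduced Riemannian operator $\bbG^\mathrm{red}(w)=L^*\bbG(j(w))L$. The task then splits into two parts: (i) identify the infimum on the right-hand side of the claimed formula with the quadratic form of $(\bbG^\mathrm{red}(w))^{-1}$, and (ii) verify that the restricted dynamics coincides with $\dot w=-\bbK^\mathrm{red}(w)\rmD(\calE|_\calN)(w)$.

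For part (i), I would analyse the strictly convex quadratic constrained problem
\[
\min\bigset{\langle \xi,\bbK(j(w))\xi\rangle}{L^*\xi=\eta}
\]
by Lagrange multipliers $\lambda\in \rmT_w\calN$. The first-order conditions give $\bbK\xi=L\lambda$, equivalently $\xi=\bbG L\lambda$, together with $L^*\xi=\eta$. Substituting yields $(L^*\bbG L)\lambda=\eta$, so $\lambda=(\bbG^\mathrm{red}(w))^{-1}\eta$, and evaluating the objective at the unique minimiser gives
\[
\langle \xi,\bbK\xi\rangle=\langle \xi,L\lambda\rangle=\langle L^*\xi,\lambda\rangle=\langle \eta,(\bbG^\mathrm{red}(w))^{-1}\eta\rangle,
\]
which is exactly $\langle \eta,\bbK^\mathrm{red}(w)\eta\rangle$ under the definition in the theorem. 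Hence $\bbK^\mathrm{red}(w)=(\bbG^\mathrm{red}(w))^{-1}$, which is precisely the Schur-complement relation alluded to in Remark~\ref{re:Schur}.

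For part (ii), invariance of $\calN$ lets me write any solution as $u(t)=j(w(t))$ with tangent decomposition $\dot u=L\dot w$. Inserting this into $\dot u=-\bbK(u)\rmD\calE(u)$, applying the formal inverse $\bbG$, and then pairing with $L^*$ gives $L^*\bbG L\,\dot w=-L^*\rmD\calE(j(w))=-\rmD(\calE|_\calN)(w)$ via the chain rule. By (i), this is the reduced gradient flow $\dot w=-\bbK^\mathrm{red}(w)\rmD(\calE|_\calN)(w)$, as desired.

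The main obstacle is that the identification $\bbG=\bbK^{-1}$ is purely formal in the infinite-dimensional setting of interest (where $\bbK(\rho)\xi=-\alpha\DIV(\rho\nabla\xi)+\beta\rho\xi$ is a differential operator), so the manipulations in part (ii) must be read at the level of the symmetric bilinear forms rather than through an actual inversion. The variational formula in the statement is precisely the intrinsic object that circumvents this: it only requires pairing with $\bbK$, uses strict positivity of $\bbK$ to guarantee a unique minimiser, and uses the feasibility of the constraint $L^*\xi=\eta$ (which holds for any smooth embedding $j$ because $L^*$ is surjective). I would therefore present part (ii) directly, avoiding inversion, by verifying the identity $L\bbK^\mathrm{red}(w)L^*=\bbK(j(w))$ restricted to $\mathrm{im}(L)$—an identity that follows cleanly from the Lagrange-multiplier analysis above and is all that is needed because, by invariance, $\bbK(j(w))\rmD\calE(j(w))$ already lies in $\mathrm{im}(L)=\rmT_{j(w)}\calN$.
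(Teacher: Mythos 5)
Your argument is correct and is essentially the paper's own route: the printed proof is only a citation to \cite[Sec.\,6.1]{MaaMie20MCRS}, but the Lagrange-multiplier system $\bbK(j(w))\xi = \rmD j(w)\lambda$, $\rmD j(w)^*\xi=\eta$ and the resulting Schur-complement identity $\bbK^\mathrm{red}(w)=\bigl(\rmD j(w)^*\,\bbG(j(w))\,\rmD j(w)\bigr)^{-1}$ that you derive are exactly what the paper records immediately afterwards in Proposition~\ref{pr:ReducedOnsager} and Remark~\ref{re:Schur}. Your closing remark --- that invariance places $\bbK(j(w))\rmD\calE(j(w))$ in the image of $\rmD j(w)$, so the identity $L\bbK^\mathrm{red}(w)L^*\zeta=\bbK(j(w))\zeta$ is only needed on that fibre and the formal inversion $\bbG=\bbK^{-1}$ can be avoided --- is the right way to make the reduction rigorous at the level of bilinear forms.
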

\begin{proof}
The result is the content of the theory developed in \cite[Sec.\,6.1]{MaaMie20MCRS}. 
\end{proof}

In both results above, we can use a parameter manifold $\mfP$ instead of
the submanifold $\calN$, i.e., we have an invertible map $\Uppsi: \mfP \to \calN
\subset \calM$. We can then define the reduced gradient system with the parameter
manifolds $\mfP$ as the state space. The reduced energy functional is given by 
$\sfE=\calE\circ \Uppsi; \ \sfp\mapsto \calE(\Uppsi(\sfp))$. 

In the Riemannian case, we obtain 
the reduced gradient system
\[
(\mfP,\sfE, \bbG^\text{red}) \quad\text{with}\quad \bbG^\text{red}(\sfp)=\rmD \Uppsi(\sfp)^*
\bbG(\Uppsi(\sfp)) \rmD \Uppsi(\sfp): \rmT_\sfp\mfP\to \rmT^*_\sfp\mfP.
\]
For the Onsager case, we obtain the corresponding reduced gradient system in the form 
\begin{align*}
(\mfP,\sfE, \bbK^\mathrm{red}) \quad \text{with } 
\langle \eta,\bbK^\mathrm{red}(\sfp)\eta\rangle_{\rmT_\sfp\mfP} = \inf\bigset{\langle
  \xi,\bbK(\Uppsi(\sfp))\xi\rangle_{\rmT_{\Uppsi(\sfp)}\calM} }{ \rmD\Uppsi(\sfp)^*\xi=\eta} . 
\end{align*}

The calculation of $\bbK^\mathrm{red}$ can be performed using the standard Lagrange theory for constraint minimization: For fixed $\sfp \in \mfP$ and $\eta \in
\rmT^*_\sfp\mfP$, one defines the  Lagrange function
\[
\mathfrak{L}(\xi,\lambda) := \frac{1}{2}\big\langle \xi,
  \bbK(\Uppsi(\sfp)) \xi \big\rangle_{\rmT_{\Uppsi(\sfp)}\calM} + \langle \rmD
  \Uppsi(\sfp)^* \xi {-}\eta,\lambda\rangle_{\rmT_\sfp\mfP}.
\]
The optimal $\xi$ can be found as the critical point of $\mathfrak L$, i.e., by solving the reduction equation   
\begin{equation}
  \label{eq:AbstrRedEqn}
\bbK(\Uppsi(\sfp)) \xi = \rmD\Uppsi(\sfp) \lambda \in \rmT_{\Uppsi(\sfp)}\calM \quad \text{and} \quad 
\rmD\Uppsi(\sfp)^* \xi=\eta \in \rmT^*_\sfp\mfP  
\end{equation}
for $(\xi,\lambda) \in \rmT^*_{\Uppsi(\sfp)}\calM\ti \rmT_\sfp\mfP$, 
where $\lambda \in \rmT_\sfp\mfP$ is the Lagrange multiplier. 

\medskip
In summary, we obtain the following result, which will be used to derive the reduced gradient structure on the parameter space $\mfP$.

\begin{proposition}[Formula for reduced Onsager operator]
\label{pr:ReducedOnsager}
Under the above notations and assumptions, the reduced Onsager operator $ \bbK^\mathrm{red}$ is given by  
\begin{subequations}
 \label{eq:ReducOnsager}
\begin{equation}
    \label{eq:ReducOnsager.a}
    \langle \eta , \bbK^\mathrm{red}(\sfp) \eta\rangle 
      = \langle \eta, \ol\lambda \rangle 
      = \langle \ol\xi, \bbK(\Uppsi(\sfp)) \ol\xi \rangle,
\end{equation}
where $(\ol\xi,\ol\lambda)\in \rmT^*_{\Uppsi(\sfp)}\calM\ti \rmT_\sfp\mfP$ is a solution of 
\begin{equation}
    \label{eq:ReducOnsager.b}
\bma{cc} \bbK(\Uppsi(\sfp)) & -\rmD\Uppsi(\sfp)\\ -\rmD\Uppsi(\sfp)^* &0\ema \binom{\ol\xi}{\ol\lambda} = \binom{0}{-\eta}. 
\end{equation}
\end{subequations}
\end{proposition}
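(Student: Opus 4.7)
The plan is to derive the formula by solving the constrained quadratic minimization in the variational characterization of $\bbK^\mathrm{red}$ provided by Theorem~\ref{th:ReductOnsag} (applied through the parametrization $\Uppsi:\mfP\to\calN$) via the standard Lagrange multiplier technique. More precisely, the theorem states
\[
\langle \eta, \bbK^\mathrm{red}(\sfp)\eta\rangle = \inf\bigset{\langle \xi,\bbK(\Uppsi(\sfp))\xi\rangle}{\rmD\Uppsi(\sfp)^*\xi=\eta},
\]
and the right-hand side is a quadratic minimization with a linear equality constraint, which is exactly the setting in which Lagrange's method is exact and lossless.

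First, I would observe that since $\bbK(\Uppsi(\sfp))$ is symmetric and positive definite, the functional $\xi\mapsto\tfrac12\langle\xi,\bbK(\Uppsi(\sfp))\xi\rangle$ is strictly convex, and since $\Uppsi$ is a smooth parametrization, $\rmD\Uppsi(\sfp)$ is injective and $\rmD\Uppsi(\sfp)^*$ is surjective, so the constraint $\rmD\Uppsi(\sfp)^*\xi=\eta$ is feasible for every $\eta\in\rmT^*_\sfp\mfP$. Consequently, a unique minimizer $\ol\xi$ exists, with a unique Lagrange multiplier $\ol\lambda\in\rmT_\sfp\mfP$. Writing the stationarity condition $\partial_\xi\mathfrak{L}(\ol\xi,\ol\lambda)=0$ and the constraint in block-matrix form (with the sign convention of the paper, which absorbs minuses into the Lagrangian term) immediately yields
\[
\bbK(\Uppsi(\sfp))\ol\xi - \rmD\Uppsi(\sfp)\ol\lambda = 0, \qquad -\rmD\Uppsi(\sfp)^*\ol\xi = -\eta,
\]
which is exactly the saddle-point system~\eqref{eq:ReducOnsager.b}.

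Next, I would compute the optimal value. Substituting the first optimality equation $\bbK(\Uppsi(\sfp))\ol\xi=\rmD\Uppsi(\sfp)\ol\lambda$ into the quadratic and using the duality pairing between $\rmD\Uppsi(\sfp)$ and its adjoint,
\[
\langle \ol\xi,\bbK(\Uppsi(\sfp))\ol\xi\rangle = \langle \ol\xi,\rmD\Uppsi(\sfp)\ol\lambda\rangle = \langle \rmD\Uppsi(\sfp)^*\ol\xi,\ol\lambda\rangle = \langle \eta,\ol\lambda\rangle.
\]
Combining this with the variational formula of Theorem~\ref{th:ReductOnsag} gives the full chain of equalities \eqref{eq:ReducOnsager.a} and completes the proof.

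The main obstacle, at least at a formal level, is ensuring that the saddle-point system has a unique solution; this is handled by the injectivity of $\rmD\Uppsi(\sfp)$ (so $\ol\lambda$ is unique) and the positive definiteness of $\bbK$ (so $\ol\xi$ is unique). In an infinite-dimensional setting this would require a genuine inf-sup/Babu\v{s}ka--Brezzi argument, but since the paper treats the Onsager operators formally and the subsequent computations are done in the finite-dimensional parameter space $\mfP$, no functional-analytic subtleties need to be addressed, and the proof reduces to the algebraic manipulation sketched above.
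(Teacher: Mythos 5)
Your proposal is correct and follows essentially the same route as the paper: the paper's justification for Proposition~\ref{pr:ReducedOnsager} is precisely the Lagrange-multiplier derivation preceding it (the Lagrangian $\mathfrak{L}$, the stationarity system \eqref{eq:AbstrRedEqn}, and the substitution $\langle \ol\xi,\bbK\ol\xi\rangle=\langle\rmD\Uppsi(\sfp)^*\ol\xi,\ol\lambda\rangle=\langle\eta,\ol\lambda\rangle$). Your additional remarks on strict convexity, injectivity of $\rmD\Uppsi(\sfp)$, and the resulting uniqueness of the saddle point are a welcome but minor elaboration of what the paper leaves implicit.
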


\begin{remark}[Reduction in terms of the Schur complement] 
\label{re:Schur}
As a special case of formula \eqref{eq:ReducOnsager}, we consider the simplified case of $X=X_1\ti X_2$ with $X_1\ti\{0\}$ being an invariant subspace for the gradient system $(X,\calE,\bbK)$, where 
\[
	X=X_1\ti X_2 \quad \text{and} \quad \bbK(u) = \bma{@{}cc@{}} K_{11}(u)& K_{12}(u) \\ 
	K_{21}(u)& K_{22}(u) \ema\!{:}\,X_1^*\ti X_2^* \to X_1\ti X_2.
\]
Writing $u=(u_1,u_2)$ and $\rmD \calE(u)=(\rmD_1 \calE(u),\rmD_2 \calE(u))$, the gradient-flow equation reads
\begin{align*}
\dot u_1&= -\big(K_{11}(u)\rmD_1 \calE(u){+} K_{12}(u)\rmD_2 \calE(u)\big), \quad 
\dot u_2= -\big(K_{21}(u)\rmD_1 \calE(u){+} K_{22}(u)\rmD_2 \calE(u)\big).
\end{align*}
The invariance condition of $X_1\ti\{0\}$ means
\[
\dot u_2=0 = K_{21}(u_1,0) \rmD_1 \calE(u_1,0) + K_{22}(u_1,0) \rmD_2 \calE(u_1,0),
\]
which gives $\rmD_2 \calE(u_1,0)= - K_{22}(u_1,0)^{-1}  K_{21}(u_1,0) \rmD_1
\calE(u_1,0)$.  Inserting this relation
into the equation for $\dot u_1$ and writing $\sfE(u_1)=\calE(u_1,0)$, we obtain $\dot u_1 = - \bbK^\mathrm{red}(u_1) \rmD\sfE(u_1)$ with
\[
	\bbK^\mathrm{red}(u_1) = K_{11}(u_1,0) - K_{12}(u_1,0) \big(K_{22}(u_1,0)\big)^{-1} K_{21}(u_1,0),
\]
i.e., $\bbK^\mathrm{red}(u_1)$ is the Schur complement of $\bbK(u_1,0)$
when eliminating the $X_2$ component. 
\end{remark}

\subsection{Reduced Onsager operators}
\label{su:Reduct.HK}

If we apply the abstract theory from above to the Gaussian family 
$\Uppsi(\Sigma,m,\kappa)=\kappa \sfG(\Sigma,m)$, we can again take advantage 
of the property that the Onsager operator $\bbK_{\alpha,\beta}(\rho)$ is the 
linear combination $\alpha \bbK_\Ot(\rho) + \beta \bbK_\mafo{He}(\rho)$, where 
both operators $ \bbK_\Ot(\rho)$ and $\bbK_\mafo{He}(\rho)$ are linear in 
$\rho$. Moreover, the Gaussian family has the property that 
$\rmD \Uppsi(\sfp)[\wh \sfp]$ is always the product of a Gaussian and a 
quadratic polynomial in $x \in \R^d$. This characteristic is exactly mirrored 
in the Onsager operator $\bbK_{\alpha,\beta}$ in the sense that 
$\bbK_{\alpha,\beta}(\Uppsi(\sfp)) \xi$ is again a product of a Gaussian and 
a quadratic polynomial if $\xi$ is a quadratic polynomial. These two facts 
will lead to the surprising result that the reduced Onsager operator is again 
a linear combination, namely
\[
	\bbK^\mathrm{red}(\sfp)=\alpha \,\bbK^\mathrm{red}_\mafo{tra}(\sfp) 
                      + \beta \,\bbK^\mathrm{red}_\mafo{He}(\sfp), 
\]
see Theorem~\ref{th:ReducedGradStr} below. 

Before doing the calculation, we provide some details on the 
derivative $\rmD\Uppsi$ in the following remark,
which will also be useful in Section~\ref{se:ExtenNonGauss}, where we consider non-Gaussian target measures $\target\in\calM(\bbR^d)$.

\begin{remark}\label{rem:psi-derivative}
	Recall the map $\Uppsi(\Sigma,m,\kappa) = \kappa \sfG(\Sigma,m)$. Simple computations yield
	\begin{align*}
		\rmD_\Sigma \sfG(\Sigma,m)[\wh \Sigma] &= \frac{1}{2}\left[ \trace\bigl[\wh\Sigma \bigl(\Sigma^{-1}(x-m)\otimes\Sigma^{-1} (x-m)\bigr)\bigr] - \trace\bigl[\wh\Sigma\Sigma^{-1}\bigr] \right]\sfG(\Sigma,m), \\
		\rmD_m \sfG(\Sigma,m)[\wh m] &= \langle \wh m,\Sigma^{-1}(x-m)\rangle\, \sfG(\Sigma,m),
	\end{align*}
	which, due to the identities
	\begin{align*}
		\nabla_x \sfG(\Sigma,m;x) &= -\Sigma^{-1}(x-m)\,\sfG(\Sigma,m),  \\
		\nabla_x^2 \sfG(\Sigma,m;x) &= \Bigl[\Sigma^{-1}(x-m)\otimes \Sigma^{-1}(x-m) - \Sigma^{-1}\Bigr]\sfG(\Sigma,m),
	\end{align*}
	may be equivalently expressed as
	\begin{align*}
		\rmD_\Sigma \sfG(\Sigma,m)[\wh \Sigma] &= \frac{1}{2}\trace\bigl[\wh\Sigma \nabla_x^2 \sfG(\Sigma,m)\bigr] \\
		\rmD_m \sfG(\Sigma,m)[\wh m] &= -\langle \wh m,\nabla_x \sfG(\Sigma,m)\rangle .
	\end{align*}
	Consequently, we obtain
	\[
		\rmD\Uppsi(\Sigma,m,\kappa)[\wh \Sigma,\wh m,\wh\kappa] =
			\frac{1}{2}\trace\bigl[\wh\Sigma \nabla_x^2 \Uppsi(\Sigma,m)\bigr] 
			-\langle \wh m,\nabla_x \Uppsi(\Sigma,m)\rangle +
			\wh\kappa \sfG(\Sigma,m).
	\]
\end{remark}

We now return to the derivation of the reduced Onsager operator $\bbK^\mathrm{red}$ and use formula \eqref{eq:ReducOnsager} provided by Proposition~\ref{pr:ReducedOnsager}, which takes the specific form 
\[
	\bbK_{\alpha,\beta}\big(\kappa \sfG(\Sigma,m)\big) \ol\xi = \rmD \Uppsi(\Sigma,m,\kappa) (\wh\Sigma,\wh m, \wh \kappa) , \quad \rmD \Uppsi(\Sigma,m,\kappa)^*\xi= (S,\mu,k) \in \rmT^*_\sfp\mfP,
\]
where $ \sfp = (\Sigma,m,\kappa)\in \mfP$ , 
$\ol\lambda=(\wh\Sigma,\wh m,\wh\kappa)$, and $\eta=(S,\mu,k)$. 

We will show that it is possible to find a solution $\ol\xi$ that is a quadratic
polynomial. Hence, it lies in a finite-dimensional set with exactly the same 
dimension as $\rmT^*_\sfp\mfP $, namely $\frac12d(d{+}1) + d +1$.  
Hence, we can reduce the analysis to an
algebraic calculation for finding the coefficients.

More precisely, the equation $\rmD \Uppsi(\sfp)^* \xi= \eta$ means
\[
	\big\langle \xi,\rmD\Uppsi(\sfp) \wh\sfp
    \big\rangle_{\rmT_{\Uppsi(\sfp)}\rmL^2(\R^d)} = \langle \eta, 
    \wh\sfp \rangle_{\rmT_\sfp \mfP} 
\quad \text{for all } \wh \sfp \in \rmT_\sfp\mfP,
\]
and the equation $\bbK_{\alpha,\beta}(\Uppsi(\sfp))\ol\xi = \rmD\Uppsi(\sfp) \ol\lambda$ 
implies that $\ol\xi$ is quadratic, allowing us to use the ansatz
\[
\ol\xi(x) =\xi_{A,b,c}(x):= (x{-}m){\cdot} A(x{-}m) + b{\cdot} (x{-}m) + c.
\]
Using the explicit form of $\rmD\Uppsi(\sfp) \wh \sfp$ with $\wh \sfp = 
(\wh \Sigma, \wh m, \wh\kappa)$, we have to satisfy
\begin{align*}
\int_{\R^d} \xi_{A,b,c}(x)\Big[ \wh\kappa + \kappa \,\wh m\cdot
\Sigma^{-1}(x{-}m)& + \frac\kappa2\big((x{-}m){\cdot} \Sigma^{-1}\wh\Sigma
\Sigma^{-1}(x{-}m) {-} \Sigma^{-1}{\mdot} \wh \Sigma\big) \Big] \sfG(\Sigma,m) \dd x \\
& = S{:}\wh\Sigma + \mu{\cdot}\wh m + k\,\wh\kappa \quad \text{for all } (\wh
\Sigma, \wh m, \wh\kappa) \in \rmT_\sfp\mfP. 
\end{align*} 
Substituting $x=m+\Sigma^{1/2}y$ and using the Gaussian integrals
\eqref{eq:A.DD02} and \eqref{eq:A.DD4}, we obtain 
the following linear system for determining $(A,b,c)$, and hence 
$\ol\xi = \xi_{A,b,c}$: 
\begin{align*}
	\wh\kappa\big( \trace(A\Sigma){+}c\big) + \wh m\cdot(\kappa b) + \kappa \trace(A\wh\Sigma) = S{:}\wh\Sigma + \mu{\cdot}\wh m + k\,\wh\kappa 
\end{align*}
Thus, we have found the unique solution $\ol\xi=\Xi_{S,\mu,k}= \xi_{A,b,c}$ with 
\[
	A= \frac1\kappa S, \qquad 
	b=\frac1\kappa\,\mu,\qquad 
	c= k-\frac{1}{\kappa}\,\trace(S\Sigma). 
\]
Using \eqref{eq:ReducOnsager.a}, the reduced Onsager operator $\bbK^\mathrm{red}$ 
is now defined via
\[
\Big\langle \bma{@{}c@{}}S\\[-0.4em]\mu\\[-0.2em] k \ema\!, 
 \bbK^\mathrm{red}(\sfp)\!
\bma{@{}c@{}}S\\[-0.4em]\mu\\[-0.2em] k \ema \Big\rangle_{\rmT_\sfp \mfP} 
:= \big\langle \Xi_{S,\mu,k} ,
\bbK_{\alpha,\beta}\big(\Uppsi(\sfp)\big) \Xi_{S,\mu,k} 
\big\rangle_{\rmT_{\Uppsi(\sfp)}\rmL^2(\R^d)} .
\]

Since the linear mappings $(S,\mu,k) \mapsto (A,b,c) \mapsto \xi_{A,b,c} $ are completely independent of $\alpha$ and $\beta$, we can construct $\bbK^\mathrm{red}$ by doing the reduction for $\bbK_\Ot$ and $\bbK_{\He}$ independently, and then combining the result.

\begin{lemma}\label{lem:reduced-Onsager}
	For $\sfp=(\Sigma,m,\kappa)\in\mfP$, the reduced Onsager operator $\bbK_{\alpha,\beta}^\mathrm{red}(\sfp)$ is given by
\begin{align}
	\begin{aligned}
	\bbK_{\alpha,\beta}^\mathrm{red}(\sfp) &= \alpha \bbK_\Ot^\mathrm{red}(\sfp) + \beta \bbK_{\He}^\mathrm{red}(\sfp) \\[0.3em]
	&= \alpha \!\bma{ccc}\frac2{\kappa}\big(
\Box \Sigma + \Sigma \Box\big) &0&0 \\ 0 & \frac1\kappa\,\Box & 0\\ 0&0& 0 \ema + \beta \!\bma{ccc}\frac2\kappa\, \Sigma \Box
\Sigma &0&0 \\ 0 & \frac1\kappa\,\Sigma \Box& 0\\ 0&0& \kappa \Box\ema.
	\end{aligned}
\end{align}
\end{lemma}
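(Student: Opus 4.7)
The plan is to evaluate the quadratic form $\langle \Xi_{S,\mu,k}, \bbK_{\alpha,\beta}(\Uppsi(\sfp))\Xi_{S,\mu,k}\rangle$ explicitly, using the closed-form coefficients $A = S/\kappa$, $b = \mu/\kappa$, $c = k - \trace(S\Sigma)/\kappa$ for $\ol\xi = \xi_{A,b,c}$ already derived in the paragraphs preceding the statement, and then read off the resulting block-matrix expression. Since the map $(S,\mu,k) \mapsto \xi_{A,b,c}$ is independent of $\alpha$ and $\beta$, and since $\bbK_{\alpha,\beta} = \alpha \bbK_\Ot + \beta \bbK_\He$ splits linearly, I would treat the two contributions separately and then sum them.

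For the Otto contribution I would use the Dirichlet-form expression $\langle \xi, \bbK_\Ot(\rho)\xi\rangle = \int_{\R^d}\rho\,|\nabla \xi|^2\dd x$. With $\nabla \xi_{A,b,c}(x) = b + 2A(x-m)$, the square $|\nabla \xi|^2$ expands into three pieces: the cross term is odd in $x-m$ and vanishes under integration against $\kappa \sfG(\Sigma,m)$, while the two surviving pieces yield $\kappa|b|^2$ and $4\kappa\trace(A\Sigma A)$ by the second-moment identity \eqref{eq:A.DD02}. Substituting $A = S/\kappa$, $b = \mu/\kappa$ produces $\tfrac{1}{\kappa}|\mu|^2 + \tfrac{4}{\kappa}\trace(S\Sigma S)$, which matches the quadratic form defined by the proposed $\bbK_\Ot^\mathrm{red}(\sfp)$. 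The absence of any dependence on $c$ (hence on $k$) reflects the vanishing $(k,k)$-entry and is the expected fact that pure transport preserves mass.

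For the Hellinger contribution I would use $\langle \xi, \bbK_\He(\rho)\xi\rangle = \int_{\R^d}\rho\, \xi^2\dd x$. Writing $y = x-m$ and expanding $\xi^2 = (c + b\cdot y + y\cdot A y)^2$ produces six monomials in $y$; the two odd-order ones integrate to zero, and the remaining four are handled by \eqref{eq:A.DD02} and the fourth-order Isserlis-type identity \eqref{eq:A.DD4}. Collecting terms gives
\begin{equation*}
\int_{\R^d}\rho\,\xi^2\dd x = \kappa\Bigl[\bigl(c + \trace(A\Sigma)\bigr)^2 + b\cdot \Sigma b + 2\,\trace\bigl((A\Sigma)^2\bigr)\Bigr].
\end{equation*}
The decisive simplification is that the coefficient $c$ was chosen precisely so that $c + \trace(A\Sigma) = k$, which eliminates the would-be cross-term between $S$ and $k$; resubstituting then yields $\kappa k^2 + \tfrac{1}{\kappa}\mu\cdot \Sigma \mu + \tfrac{2}{\kappa}\trace\bigl((S\Sigma)^2\bigr)$, matching the quadratic form associated with the proposed $\bbK_\He^\mathrm{red}(\sfp)$.

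The step I expect to be most delicate is the verification of the block-diagonal structure: no couplings may arise between the $S$-, $\mu$-, and $k$-directions. For the Otto part this is automatic from Gaussian parity, but for the Hellinger part it rests on the very specific form of $c$. Once both contributions are assembled, taking the weighted sum $\alpha\cdot(\text{Otto})+\beta\cdot(\text{Hellinger})$ and reading off each entry with respect to the Frobenius inner product on $\R^{d\ti d}_\spd$ and the Euclidean/scalar inner products on $\R^d$ and $\R$ delivers the claimed formula for $\bbK^\mathrm{red}_{\alpha,\beta}(\sfp)$.
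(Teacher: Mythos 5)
Your proposal is correct and follows essentially the same route as the paper's proof: evaluate the quadratic forms $\int\rho\,|\nabla\Xi_{S,\mu,k}|^2\dd x$ and $\int\rho\,\Xi_{S,\mu,k}^2\dd x$ using the explicit coefficients $A=S/\kappa$, $b=\mu/\kappa$, $c=k-\trace(S\Sigma)/\kappa$, reduce to centered Gaussian moments via \eqref{eq:A.DD02} and \eqref{eq:A.DD4}, and read off the blocks. Your added remark that the choice of $c$ is exactly what makes $c+\trace(A\Sigma)=k$ and hence kills the $S$--$k$ coupling is a point the paper uses only implicitly, but the computation itself is the same.
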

\begin{proof}
	The calculation is straightforward. Let $\eta = (S,\mu,k)^\top$. Then, for the Otto-Onsager operator, we deduce
\begin{align*}
&\langle \eta, \bbK_\Ot^\text{red}(\sfp)\eta\rangle_{\rmT^*_\sfp\mfP} 
= \big\langle \Xi_{S,\mu,k} ,
\bbK_\Ot\big(\kappa \, \sfG(\Sigma,m)\big) \Xi_{S,\mu,k} 
\big\rangle_{\rmT_{\kappa \sfG(\Sigma,m)}\rmL^2(\R^d)} 
\\
&\qquad= \int_{\R^d}\!\! \kappa \, \sfG(\Sigma,m;x) \big|\nabla \Xi_{S,\mu,k}(x)\big|^2 \dd x
= \kappa \int_{\R^d}\!  \sfG(\Sigma,0;y)\big| 2Ay+b|^2 \dd y 
\\
&\qquad=\kappa \int_{\R^d}\!  \sfG(\Sigma,0;y)\big(4 y{\cdot} A^2y+4Ab{\cdot}y +|b|^2\big) \dd y 
= \kappa \big(4\trace(\Sigma^{1/2}A^2\Sigma^{1/2}) + |b|^2 \big)  
\\
&\qquad= \frac4\kappa \trace(S^2\Sigma) + \frac1\kappa |\mu|^2,
\end{align*}
from which we obtain $\bbK_\Ot^\text{red}(\sfp)$. Similarly, we compute
\begin{align*}
&\langle \eta,\bbK_{\He}^\text{red}(\sfp)\eta\rangle_{\rmT^*_\sfp\mfP} 
= \big\langle \Xi_{S,\mu,k} ,
\bbK_{\He}\big(\kappa \sfG(\Sigma,m)\big) \Xi_{S,\mu,k} 
\big\rangle_{\rmT_{\kappa \sfG(\Sigma,m)}\rmL^2(\R^d)} 
\\
&\qquad= \int_{\R^d} \!\!\kappa \sfG(\Sigma,m;x) \big(\Xi_{S,\mu,k}(x)\big)^2 \dd x
= \kappa\int_{\R^d}  \! \sfG(\Sigma,0;y) \big(y{\cdot}Ay + b{\cdot} y +c\big)^2 \dd x
\\
&\qquad= \kappa \Big(2\big|\Sigma^{1/2}A\Sigma^{1/2}\big|^2 
     + \big|\Sigma^{1/2}b\big|^2 +\big(\trace(A\Sigma)+
c\big)^2  \Big)  
\\
&\qquad= \frac2\kappa |\Sigma^{1/2}S\Sigma^{1/2}|^2 + \frac1\kappa
|\Sigma^{1/2}\mu|^2 + \kappa k^2,\qquad \eta = (S,\mu,k)^\top,
\end{align*}
where we used the matrix norm notation $|\cdot|^2:=\|\cdot\|_F^2$ for the Frobenius norm. This gives the reduced Hellinger-Onsager operator $\bbK_{\He}^\text{red}$ as asserted.
\end{proof}

A remarkable fact is that $\bbK^\mathrm{red}_{\alpha,\beta}$ is again of the additive form with coefficients $\alpha$ and $\beta$. For general Schur complements as in Remark \ref{re:Schur}, it cannot be expected that $\bbK^\mathrm{red}$ retains an additive structure. 

\begin{remark}\label{rem:}
From the reduced Onsager operators $\bbK_\Ot^\textrm{red}$ and $\bbK_\He^\textrm{red}$, one obtains dynamical transport costs taking the following form for $j\in\{\Ot,\He\}$:
\[
	\mfd_j^2(\sfp_0,\sfp_1) = \inf_{(\sfp,\eta)}\left\{\int_0^1 \langle\eta, \bbK_j^\mathrm{red}(\sfp)\eta\rangle_{\rmT^*_\sfp\mfP}\,\dd s :\; \dot\sfp = \bbK_j^\mathrm{red}(\sfp)\eta,\;\;\sfp(0)=\sfp_0,\;\sfp(1)=\sfp_1\right\}.
\]
Those represent the dynamical counterparts of the 2-Wasserstein and Hellinger distances restricted to the parameter space $\mfP$, respectively.
\end{remark}
\EEE

\subsection{Reduced gradient system} 

Based on the Lemma~\ref{lem:reduced-Onsager}, we are now left to derive the reduced driving energy, which we denote by $\sfE$. To do so, we simply evaluate the relative Boltzmann entropy in terms of the parametrization variables $\sfp=(\Sigma,m,\kappa)\in \mfP$ to obtain
\begin{equation}
\label{eq:relEntrRed}
\sfE(\sfp) = \calH_\rmB\big(\kappa \sfG(\Sigma,m) \big|
\varkappa \sfG(\Gamma,n)\big) = \kappa\, \sfH(\Sigma,m|\Gamma,n) + \LB\left(\frac\kappa\varkappa \right) \varkappa,
\end{equation}
with
\[
	\sfH(\Sigma,m|\Gamma,n)= \frac12(m{-}n)\cdot \Gamma^{-1}(m{-}n) +\frac12 \trace\big(\Gamma^{-1}\Sigma - I\big) - \frac12 \log\left( \frac{\det\Sigma}{\det\Gamma} \right).
\]
Note that $\sfH$ is simply the relative Boltzmann entropy
$\calH_\rmB\big( \sfG(\Sigma,m) \big|\sfG(\Gamma,n)\big)$ between two normalized Gaussians, and hence $\sfH(\Sigma,m|\Gamma,n)\geq 0$ with equality if and only if $\Sigma = \Gamma$ and $m=n$ (see also \cite[Eqn.\,(F59)]{FloSch24REMI}). Moreover, due to the strict concavity of the map $\bbR_{>0}^{d\ti d}\ni \Sigma\mapsto \log \det \Sigma$, we find that
	\[
		(\Sigma,m)\mapsto \sfH(\Sigma,m|\Gamma, n)\qquad\text{is strictly (jointly) convex}.
	\]
Its differential is given by
\begin{equation}
  \label{eq:rmDsfE}
  \rmD \sfE(\sfp) = \bma{c} \rmD_\Sigma \sfE(\Sigma, m,\kappa) \\
 \rmD_m \sfE(\Sigma, m,\kappa) \\
 \rmD_\kappa \sfE(\Sigma, m,\kappa) \ema
 = \bma{c} \frac\kappa2 \big( \Gamma^{-1} - \Sigma^{-1}\big) \\
\kappa\, \Gamma^{-1} (m{-}n) \\
\sfH(\Sigma,m|\Gamma,n) + \log(\kappa/\varkappa)
\ema.
\end{equation}

This leads to the following main result concerning the reduced gradient structure. 

\begin{theorem}
 \label{th:ReducedGradStr}
The system of ODEs \eqref{eq:ODE.mfP} is the gradient-flow equation of the 
gradient system $(\mfP,\sfE,\bbK^\mathrm{red}_{\alpha,\beta})$. 
\end{theorem}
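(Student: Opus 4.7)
The plan is to prove the theorem by direct verification: I compute $-\bbK^\mathrm{red}_{\alpha,\beta}(\sfp)\,\rmD\sfE(\sfp)$ using Lemma~\ref{lem:reduced-Onsager} together with the explicit formula \eqref{eq:rmDsfE} and check component-by-component that the result reproduces the right-hand side of \eqref{eq:ODE.mfP}. Conceptually, the existence of a reduced gradient structure on $\mfG$ is already guaranteed by Theorem~\ref{th:ReductOnsag} combined with the invariance of $\mfG$ under \eqref{eq:I.HK.PDE} established in Proposition~\ref{pr:Gauss.Sol}; what remains in the present statement is only to identify this abstract reduction with the concrete ODE, so the whole argument is a short algebraic matching exercise.

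I would organize the verification into three blocks, one for each component of $\sfp=(\Sigma,m,\kappa)$. For the $\Sigma$-block, I substitute $S := \rmD_\Sigma \sfE(\sfp) = \tfrac{\kappa}{2}(\Gamma^{-1}-\Sigma^{-1})$ into $\tfrac{2}{\kappa}(S\Sigma + \Sigma S)$ and into $\tfrac{2}{\kappa}\Sigma S\Sigma$; the $\kappa$-prefactors cancel and one immediately obtains $\Gamma^{-1}\Sigma+\Sigma\Gamma^{-1}-2I$ and $\Sigma\Gamma^{-1}\Sigma-\Sigma$, whose linear combination with coefficients $-\alpha$ and $-\beta$ is exactly \eqref{eq:ODE.mfP.sigma}. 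The $m$-block is analogous: using $\rmD_m\sfE(\sfp) = \kappa\,\Gamma^{-1}(m-n)$, the Otto and Hellinger contributions simplify to $\Gamma^{-1}(m-n)$ and $\Sigma\Gamma^{-1}(m-n)$, respectively, which after negation match $\dot m = \alpha\,\Gamma^{-1}(n-m) + \beta\,\Sigma\Gamma^{-1}(n-m)$.

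The $\kappa$-block is the only piece where nontrivial simplification is required, and I expect it to be the main (mild) obstacle. The Otto contribution vanishes because the third row of $\bbK_\Ot^\mathrm{red}$ is zero, which recovers the mass-conservation property of pure transport. The Hellinger contribution yields $\dot\kappa = -\beta\kappa\bigl(\sfH(\Sigma,m|\Gamma,n) + \log(\kappa/\varkappa)\bigr)$, and matching this against the target formula in \eqref{eq:ODE.mfP} requires expanding $\sfH$ via its closed-form expression and collecting the trace term, the quadratic form in $m-n$, the $\log\det$ term, and the logarithmic mass term with correct signs. This is purely bookkeeping but is where a careful check is warranted; once the expansion is done, the equality holds by inspection, completing the identification of \eqref{eq:ODE.mfP} as the gradient-flow equation of $(\mfP,\sfE,\bbK^\mathrm{red}_{\alpha,\beta})$.
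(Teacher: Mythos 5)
Your proposal is correct and coincides with the first of the two routes the paper itself indicates for this theorem: insert the explicit formulas for $\bbK^\mathrm{red}_{\alpha,\beta}(\sfp)$ from Lemma~\ref{lem:reduced-Onsager} and for $\rmD\sfE(\sfp)$ from \eqref{eq:rmDsfE}, and verify component-by-component that $-\bbK^\mathrm{red}_{\alpha,\beta}(\sfp)\rmD\sfE(\sfp)$ reproduces \eqref{eq:ODE.mfP} (equivalently \eqref{eq:ODE22.mfP}). All three blocks of your verification, including the cancellation of the $\kappa$-prefactors and the expansion of $\sfH$ in the mass equation, check out.
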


The proof can be done in two ways: We can insert the given formulas for $\bbK^\mathrm{red}_{\alpha,\beta}(\sfp)$ and for $\rmD \sfE(\sfp)$ and 
observe that \eqref{eq:ODE.mfP} has the form 
$\dot \sfp= -\bbK^\mathrm{red}_{\alpha,\beta}(\sfp)\rmD\sfE(\sfp)$, which reads
\begin{subequations}
\label{eq:ODE22.mfP}
\begin{align}
\dot \Sigma&= \alpha \big(2 I - \Gamma^{-1}\Sigma - \Sigma \Gamma^{-1} \big) 
 + \beta\big( \Sigma -\Sigma \Gamma^{-1} \Sigma \big),\label{eq:ODE22.mfP.sigma}
\\[0.3em]
\dot m &= -\bigl(\alpha +\beta\Sigma\bigr) \Gamma^{-1} (m {-} n ),
\label{eq:ODE22.mfP.m}\\
\dot \kappa &= - \beta \kappa\big(\log(\kappa/\varkappa) + \sfH(\Sigma,m|\Gamma,n)\big) \label{eq:ODE22.mfP.kappa}. 
\end{align}
\end{subequations}
Alternatively, it is a consequence of the general Theorem~\ref{th:ReductOnsag} applied to our specific case. 

In Theorem \ref{thm:HK.Gauss.mass-shape-ode} below, we will establish the system of ODEs for cases where the target measure may not be Gaussian.
\begin{remark}\label{rem:explicit.kappa}
	The solution $\kappa$ to \eqref{eq:ODE22.mfP.kappa} can be explicitly computed to obtain
	\[
		\kappa(t) = \varkappa \left(\frac{\kappa_0}{\varkappa}\right)^{\ee^{-\beta t}}\exp\left(-\int_0^t \ee^{-\beta(t-s)} \sfH(\Sigma(s),m(s)|\Gamma,n)\dd s\right).
	\]
\end{remark}

In view of Remark~\ref{rem:explicit.kappa}, it suffices to study the system of equations \eqref{eq:ODE22.mfP.sigma}--\eqref{eq:ODE22.mfP.m} for $(\Sigma,m)$. It turns out that this system can be derived as a reduced gradient flow from the evolution of normalized Gaussians in $\calP(\bbR^d)$ as described in the following.

\subsection{Reduction to normalized Gaussians}
\label{se:GaussProba}
In the preceding sections, we analyzed gradient flows in the space of Gaussian measures with variable mass. For normalized Gaussians in $\mfG_1=\Upphi(\mfP_1)\subset\calP(\bbR^d)$, we consider the gradient structure $(\calP(\R^d),\calE , \SHK_{\alpha,\beta})$ instead, where $\SHK_{\alpha,\beta}$ is the \emph{spherical} Hellinger-Kantorovich distance induced by the Onsager operator 
\[
	\bbK^\SHK_{\alpha,\beta}(\rho)\xi = \alpha\bbK_{\Ot} (\rho)\xi + \beta \bbK_{\SHe} (\rho)\xi = - \alpha \DIV\!\big(\rho\nabla \xi\big) + \beta \rho\Big( \xi -{\ts \int_{\R^d} \rho \,\xi\dd x}\Big).
\]
The associated gradient-flow equation then reads
\begin{align}
\label{eq:SHK.PDE}
\dot \rho &= \alpha \DIV\!\big( \nabla \rho + \rho\nabla \log \target\big) 
 - \beta \rho\left( \!\log\left(\frac{\rho}\target\right)
 -\int_{\R^d}\rho\log\left(\frac{\rho}\target\right) \rmd x  \right). 
\end{align}
In \cite{mielke2025hellinger}, it was observed that the solutions to the $\HK_{\alpha,\beta}$-flow \eqref{eq:I.HK.PDE} and the $\SHK_{\alpha,\beta}$-flow \eqref{eq:SHK.PDE} are closely related as given in the following statement.

\begin{lemma}\label{lem:HK.SHK.shape-mass}
 If $t \mapsto \uu(t)$ solves \eqref{eq:I.HK.PDE}, then $t\mapsto \rho(t)= \uu(t)/z(t)$ with $z(t)=\int_{\R^d} \uu(t,x)\dd x$ solves \eqref{eq:SHK.PDE}. Conversely, if $t\mapsto \rho(t)$ solves \eqref{eq:SHK.PDE}, then $t \mapsto \uu(t)=\kappa(t) \rho(t)$ solves \eqref{eq:I.HK.PDE} for a suitable function $t\mapsto \kappa(t)$.
\end{lemma}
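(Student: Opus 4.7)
The plan is to verify the two implications by direct substitution, using two elementary facts throughout: (i) the transport part of \eqref{eq:I.HK.PDE} is in divergence form and hence mass-preserving under $\dd x$ integration, and (ii) since $z := \int_{\R^d}\uu\,\dd x$ (and similarly the scalar $\kappa$ in the converse) are constant in $x$, one has $\nabla\log(\uu/\target) = \nabla\log(\rho/\target)$ whenever $\rho$ and $\uu$ differ by a positive, purely time-dependent scalar factor. These two observations trivialize the transport term, so the entire argument reduces to matching the Hellinger (reaction) terms.

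For the forward direction, I would set $\rho := \uu/z$ and differentiate to obtain $\dot\rho = \dot\uu/z - \rho\,\dot z/z$. Integrating \eqref{eq:I.HK.PDE} against $\dd x$ kills the Otto part and, using $\log(\uu/\target) = \log z + \log(\rho/\target)$ together with $\int\rho\,\dd x = 1$, yields
\[
\dot z/z = -\beta\Bigl(\log z + \int_{\R^d}\rho\log(\rho/\target)\,\dd x\Bigr).
\]
Substituting $\uu = z\rho$ into the right-hand side of \eqref{eq:I.HK.PDE} and dividing by $z$, the Otto term reduces to $\alpha\DIV(\rho\nabla\log(\rho/\target))$, while the pointwise Hellinger term $-\beta\rho(\log z + \log(\rho/\target))$ combines with $-\rho\,\dot z/z$ so that the two $\log z$ contributions cancel exactly, producing the mean-field-centered reaction on the right-hand side of \eqref{eq:SHK.PDE}. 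This cancellation is the essential algebraic point.

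For the converse, I would make the ansatz $\uu := \kappa\rho$ with a strictly positive scalar $t \mapsto \kappa(t)$ still to be determined, so that $\dot\uu = \dot\kappa\,\rho + \kappa\,\dot\rho$. The transport parts of \eqref{eq:I.HK.PDE} and \eqref{eq:SHK.PDE} match automatically by the same mechanism as above, so the requirement that $\uu$ solve \eqref{eq:I.HK.PDE} reduces, after dividing by $\rho$, to the scalar ODE
\[
\dot\kappa = -\beta\,\kappa\Bigl(\log\kappa + \int_{\R^d}\rho\log(\rho/\target)\,\dd x\Bigr).
\]
This is linear in $y := \log\kappa$ with time-dependent source, hence uniquely solvable globally for every initial mass $\kappa(0) > 0$; its Duhamel expression is exactly the explicit formula already written down in Remark~\ref{rem:explicit.kappa}.

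The only nontrivial technical point is the vanishing of the boundary term when integrating $\DIV(\uu\nabla\log(\uu/\target))$ against $\dd x$, which is needed to justify the identity for $\dot z$ in the first step. This is guaranteed by the standard decay/integrability conditions that accompany well-posedness of \eqref{eq:I.HK.PDE} and is automatic in the Gaussian regime relevant to this paper. Everything else is bookkeeping around the identity $\log(\kappa\rho/\target) = \log\kappa + \log(\rho/\target)$ and the constancy of $\log\kappa$ in $x$.
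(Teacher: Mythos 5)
Your argument is correct, and it is the natural direct verification: the paper itself offers no proof of Lemma~\ref{lem:HK.SHK.shape-mass}, deferring instead to the cited reference, so your substitution computation simply supplies what is omitted. The two key cancellations check out: in the forward direction the $\log z$ terms from $\dot u/z$ and $-\rho\,\dot z/z$ cancel and leave exactly the mean-centered reaction term of \eqref{eq:SHK.PDE}; in the converse direction matching the reaction terms forces precisely the scalar ODE $\dot\kappa=-\beta\kappa\bigl(\log\kappa+\int_{\R^d}\rho\log(\rho/\target)\,\dd x\bigr)$, which is consistent with the paper's \eqref{eq:ODE22.mfP.kappa}, \eqref{eq:HK.mass-ode} and Remark~\ref{rem:explicit.kappa}, and is globally solvable after the substitution $y=\log\kappa$. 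Two minor remarks: the transport term displayed in \eqref{eq:SHK.PDE} should read $\DIV(\nabla\rho-\rho\nabla\log\target)=\DIV\bigl(\rho\nabla\log(\rho/\target)\bigr)$ (the sign in the paper's display appears to be a typo), and your identification of it with the transport term of \eqref{eq:I.HK.PDE} is the correct reading; and you are right to flag the vanishing of the boundary term in the identity for $\dot z$ as the only analytic ingredient, which is harmless in the Gaussian class where the lemma is applied.
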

This observation allows us to separately analyze the evolution of the shape (normalized measure $\rho$) and the mass ($z$) of the solution to $\HK$ flow \eqref{eq:I.HK.PDE}, where the shape evolution is governed by the $\SHK$ flow and does not depend on the mass.

Since the manifold $\mfG_1\subset\calP(\bbR^d)$ is invariant under the $\SHK_{\alpha,\beta}$-flow whenever $\target\in\mfG_1$, a gradient structure on $\mfP_1=\R^{d\ti d}_\spd \ti \R^d$ can be derived from $(\calP(\R^d),\calE , \SHK_{\alpha,\beta})$ using the same approach as before to obtain the following result.

\begin{theorem}[Gaussian $\SHK_{\alpha,\beta}$-GFE]\label{thm:reduced-SHK}
The reduced gradient system on $\mfP_1$ takes the form $(\mfP_1,\sfE_1,\bbK_{\SHK}^\mathrm{red})$, where $\sfE_1(\sfp)= \sfH(\sfp| \Gamma,n)$, $\sfp=(\Sigma,m)$, and
\[
	\bbK_{\SHK}^\mathrm{red}(\sfp) = \alpha\bbK_{\Ot}^\mathrm{red} (\sfp) + \beta \bbK_{\SHe}^\mathrm{red} (\sfp) = \alpha  \bma{cc} \!\!2\big( \Box \Sigma {+} \Sigma \Box\big)\!\! &0 \\ 0 & \!1\,\Box\!  \ema + \beta \bma{cc}\!\!2\, \Sigma \Box \Sigma \!\!&0 \\ 0 & \Sigma \Box\!\ema.
\]
The associated equations are obtained from
\eqref{eq:ODE22.mfP} by dropping the last equation:
\begin{subequations}
\label{eq:ODE66.mfP}
\begin{align}
\dot \Sigma&= \alpha \big(2 I - \Gamma^{-1}\Sigma - \Sigma \Gamma^{-1} \big) 
 + \beta\big( \Sigma - \Sigma \Gamma^{-1} \Sigma \big),
\\[0.3em]
\dot m&=  \alpha \, \Gamma^{-1} (n {-} m ) +
\beta \,\Sigma \Gamma^{-1}(n{-} m),
\end{align}
\end{subequations}
\end{theorem}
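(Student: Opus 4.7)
The plan is to carry out the same reduction procedure used for Theorem~\ref{th:ReducedGradStr} (i.e., Proposition~\ref{pr:ReducedOnsager}), now applied to the gradient system $(\calP(\R^d),\calE,\SHK_{\alpha,\beta})$ restricted to the invariant submanifold $\mfG_1 = \sfG(\mfP_1)$. The reduced energy $\sfE_1$ is immediate: evaluating $\calH_\rmB(\sfG(\Sigma,m)|\sfG(\Gamma,n))$ produces exactly $\sfH(\Sigma,m|\Gamma,n)$, i.e.\ the $\kappa=\varkappa=1$ specialization of \eqref{eq:relEntrRed}. Its differential is read off from \eqref{eq:rmDsfE} by deleting the $\kappa$-component, giving $\rmD\sfE_1(\Sigma,m) = \bigl(\tfrac12(\Gamma^{-1}{-}\Sigma^{-1}),\,\Gamma^{-1}(m{-}n)\bigr)$.

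The substantive task is the computation of $\bbK_\SHK^\mathrm{red}$. I would reuse the quadratic ansatz $\ol\xi = \xi_{A,b,c}$ from the proof of Lemma~\ref{lem:reduced-Onsager}. Since we now parametrize over $\mfP_1$, the constraint $\rmD \sfG(\Sigma,m)^*\ol\xi = (S,\mu)$ involves only two conditions, and since $\int \sfG\,\dd x \equiv 1$ forces $\rmD\sfG$ to map into mean-zero tangent vectors, the constant $c$ in $\xi_{A,b,c}$ remains unconstrained. This gauge freedom is exactly the parameter-level reflection of the fact that constants lie in the kernel of $\bbK_\SHe$. Matching the coefficients (with $\kappa=1$) still yields $A=S$ and $b=\mu$, and one may fix $c=0$ without loss of generality.

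Next I would evaluate the two quadratic forms on this $\ol\xi$. The Otto part $\int \sfG\,|\nabla \xi_{A,b,0}|^2 \dd x = 4\trace(S^2\Sigma) + |\mu|^2$ is the $\kappa=1$ case of the calculation already performed in Lemma~\ref{lem:reduced-Onsager} and directly produces the stated Otto block. For the spherical Hellinger part, the fourth-moment identity $\int \sfG(\Sigma,0;y)(y{\cdot}Ay)^2 \dd y = 2|\Sigma^{1/2}A\Sigma^{1/2}|^2 + (\trace(A\Sigma))^2$ yields
\[
\int \sfG\,\xi^2 \dd x - \Bigl(\int \sfG\,\xi \dd x\Bigr)^2 = 2|\Sigma^{1/2}S\Sigma^{1/2}|^2 + |\Sigma^{1/2}\mu|^2,
\]
the $c$- and $\trace(S\Sigma)$-dependent terms cancelling exactly between $\int \sfG\,\xi^2\dd x$ and $(\int \sfG\,\xi\dd x)^2$. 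This matches the announced $\bbK_\SHe^\mathrm{red}$ block.

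Finally, the ODE system follows by direct substitution of $\bbK_\SHK^\mathrm{red}(\sfp)$ and $\rmD \sfE_1(\sfp)$ into $\dot\sfp = -\bbK_\SHK^\mathrm{red}(\sfp)\rmD \sfE_1(\sfp)$; the resulting equations reproduce the first two lines of \eqref{eq:ODE22.mfP} verbatim. As an independent consistency check, Lemma~\ref{lem:HK.SHK.shape-mass} guarantees that the shape evolution is intrinsic and independent of the mass, so these must agree with the $(\Sigma,m)$-equations already derived in Theorem~\ref{th:ReducedGradStr}. The only delicate point in the argument is the gauge choice in $\ol\xi$; but because $\bbK_\SHe$ annihilates constants, the gauge automatically drops out of the final quadratic form, so this is a technicality rather than a genuine obstacle.
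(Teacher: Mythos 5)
Your proposal is correct and follows exactly the reduction route the paper itself invokes (it states the theorem with only a pointer to ``the same approach as before,'' i.e.\ Proposition~\ref{pr:ReducedOnsager} and the computation of Lemma~\ref{lem:reduced-Onsager}); your explicit verification that the unconstrained constant $c$ is harmless because constants lie in the kernel of $\bbK_\SHe$, and that the $\trace(S\Sigma)$- and $c$-terms cancel between $\int\sfG\,\xi^2$ and $(\int\sfG\,\xi)^2$, is precisely the detail the paper leaves to the reader. The resulting blocks and ODEs match the statement.
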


\subsection{Extension to non-Gaussian target measure $\target$}
\label{se:ExtenNonGauss}

In the previous sections, we have considered the driving energy $\calH_\rmB(\,\cdot\,|\target)$ for a Gaussian target measure $\target$. We now consider the general case when the target measure $\target$ is not necessarily Gaussian,
which is the case in various applications.
We first show a simple formal calculation to provide the ODE for the $\HK_{\alpha,\beta}$-flow
for a general target measure $\target\in \calM(\bbR^d)$.
Note that the formal gradient-flow equations have been derived for the pure Wasserstein case in \cite{lambertVariationalInferenceWasserstein2022} and for the pure Fisher-Rao (i.e., spherical Hellinger) case in \cite{chenGradientFlowsSampling2023}.
Using our reduced Onsager structure on the parameter space, we can easily write down the ODE for the reduced Gaussian $\HK_{\alpha,\beta}$-flow.

For a general target measure $\target\in \calM(\bbR^d)$, the reduced relative Boltzmann entropy takes the following form: For $\sfp=(\Sigma,m,\kappa)\in\mfP$,
\begin{align*}
	\sfE(\sfp) &= \calH_\rmB\big(\kappa \sfG(\Sigma,m) \big|\target\big) = |\target|\lambda_\rmB\Big(\frac{\kappa}{|\target|}\Big) + \kappa \,\calH_\rmB\Big(\sfG(\Sigma,m)\,\Big|\,\frac{\target}{|\target|}\Big).
\end{align*}
For target measures $\target$ with Lebesgue density, the differential $\rmD\sfE$ can be computed using Remark~\ref{rem:psi-derivative} to obtain
\begin{align*}
	\rmD_\Sigma\sfE(\Sigma,m,\kappa) &= -\frac{\kappa}{2}\left(\Sigma^{-1} + \int_{\R^d} \nabla_x^2 \log \target\,\,\sfG(\Sigma,m)\dd x \right), \\
	\rmD_m\sfE(\Sigma,m,\kappa) &= -\kappa \int_{\R^d} \nabla_x\log \target\,\, \sfG(\Sigma,m)\dd x, \\
	\rmD_\kappa\sfE(\Sigma,m,\kappa) &= 
	\log \Big(\frac{\kappa}{|\target|}\Big) + \calH_\rmB \Big( 
    \sfG(\Sigma,m)\,\Big|\,\frac{\target}{|\target|}\Big).
\end{align*}

Compared with the differentials for the Gaussian target measure in \eqref{eq:rmDsfE},
the differentials for the general target measure can be obtained by replacing the quantity $\left(\Gamma^{-1}, \Gamma^{-1}(m-n)\right)$ with the quantity
$\left(- \int \nabla_x^2 \log \target\,\,\sfG(\Sigma,m)\dd x,  - \int \nabla_x\log \target\,\, \sfG(\Sigma,m)\dd x\right)$.
For ease of notation, we set
\[
	\Gamma^{-1}=\Gamma^{-1}(\Sigma,m) := -\int_{\R^d} \nabla_x^2 \log \target
    \;\sfG(\Sigma,m)\dd x.
\]

The evolution for $\sfp=(\Sigma,m,\kappa)$ can be deduced from $\dot \sfp= -\bbK^\mathrm{red}_{\alpha,\beta}(\sfp)\rmD\sfE(\sfp)$, yielding

\begin{theorem}
  [$\HK$-Gaussian shape-mass ODE]
  \label{thm:HK.Gauss.mass-shape-ode}
  Let $\target\in\calM(\bbR^d)$ be a target measure with Lebesgue density. Then the ODE for the reduced Gaussian
  $\HK_{\alpha,\beta}$-gradient flow on $\sfp=(\Sigma,m,\kappa)\in \mfP$ with driving energy $\sfE$
  is given by
\begin{subequations}\label{eq:general-target}
\begin{align}
	\dot\Sigma &= \alpha \bigl(2I - \Gamma^{-1}\Sigma -\Sigma \Gamma^{-1}\bigr) 
    + \beta\bigl(\Sigma - \Sigma \Gamma^{-1}\Sigma\bigr), 
    \label{eq:HK.Gauss.mass-shape-ode-covariance}
\\
	\dot m &= \int_{\R^d} (\alpha {+} \beta\Sigma)\: \nabla_x\log\target 
    \; \sfG(\Sigma,m)\dd x, \label{eq:HK.Gauss.mass-shape-ode-mean}
\\
	\dot\kappa &= 
	-\beta \kappa \left(\log \Big(\frac{\kappa}{|\target|}\Big) + \calH_\rmB\Big(\sfG(\Sigma,m)\,\Big|\,\frac{\target}{|\target|}\Big)\right).
	\label{eq:HK.mass-ode}
\end{align}
\label{eq:HK.Gauss.mass-shape-ode-all}
\end{subequations}

Furthermore, the mass ODE \eqref{eq:HK.mass-ode} admits the explicit solution 
(cf.\ Remark~\ref{rem:explicit.kappa})
\[
	\kappa(t) = |\target| \Big(\frac{\kappa_0}{|\target|} 
    \Big)^{\ee^{-\beta t}} \!\!\exp\left(  - \int_0^t 
    \ee^{-\beta (t-s)} \: \calH_\rmB\Big(\sfG(\Sigma(s),m(s))\,\Big|\,
    \frac{\target}{|\target|}\Big)\rmd s\right).
\]
\end{theorem}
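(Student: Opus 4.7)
My plan is to obtain the system \eqref{eq:general-target} by direct substitution into the abstract gradient-flow equation $\dot\sfp = -\bbK^{\mathrm{red}}_{\alpha,\beta}(\sfp)\rmD\sfE(\sfp)$, exactly as in the Gaussian target case of Theorem~\ref{th:ReducedGradStr}, using the reduced Onsager operator from Lemma~\ref{lem:reduced-Onsager} and the differentials of $\sfE$ displayed just above the theorem statement. The key observation is that those differentials are formally identical to the ones in \eqref{eq:rmDsfE} once $\Gamma^{-1}$ and $\Gamma^{-1}(m{-}n)$ are reinterpreted as the Gaussian expectations $-\int_{\R^d}\nabla_x^2\log\target\,\sfG(\Sigma,m)\dd x$ and $-\int_{\R^d}\nabla_x\log\target\,\sfG(\Sigma,m)\dd x$, respectively. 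Since $\bbK^{\mathrm{red}}_{\alpha,\beta}$ is independent of $\target$, the derivation producing \eqref{eq:ODE22.mfP} carries over verbatim.

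Concretely, the block-diagonal form of $\bbK^{\mathrm{red}}_{\alpha,\beta}$ in Lemma~\ref{lem:reduced-Onsager} decouples the three components. For the covariance, applying the Otto block $\tfrac{2}{\kappa}(\Box\Sigma{+}\Sigma\Box)$ and the Hellinger block $\tfrac{2}{\kappa}\Sigma\Box\Sigma$ to $\rmD_\Sigma\sfE = \tfrac{\kappa}{2}(\Gamma^{-1}{-}\Sigma^{-1})$ (with the new $\Gamma^{-1}$) reproduces the right-hand side of \eqref{eq:HK.Gauss.mass-shape-ode-covariance} after the identity $\Sigma\Sigma^{-1}\Sigma=\Sigma$ cancels two pairs of terms. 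For the mean, the diagonal blocks $\tfrac{\alpha}{\kappa}\,\Box + \tfrac{\beta}{\kappa}\,\Sigma\Box$ applied to $-\rmD_m\sfE = \kappa\int_{\R^d}\nabla_x\log\target\,\sfG\dd x$ immediately yield \eqref{eq:HK.Gauss.mass-shape-ode-mean}. For the mass, only the Hellinger block $\kappa\Box$ contributes (the Otto block vanishes because transport preserves mass), and \eqref{eq:HK.mass-ode} is obtained directly from the stated expression for $\rmD_\kappa\sfE$.

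For the closed-form solution of \eqref{eq:HK.mass-ode}, I introduce $f(t):=\log(\kappa(t)/|\target|)$ and $h(t):=\calH_\rmB(\sfG(\Sigma(t),m(t))\,|\,\target/|\target|)$, where $(\Sigma,m)$ is already determined by the decoupled system \eqref{eq:HK.Gauss.mass-shape-ode-covariance}--\eqref{eq:HK.Gauss.mass-shape-ode-mean}. Dividing \eqref{eq:HK.mass-ode} by $\kappa$ reduces it to the scalar linear ODE $\dot f + \beta f = -\beta\,h(t)$, which I solve with integrating factor $\ee^{\beta t}$ on $[0,t]$ and then exponentiate. This produces the claimed formula in the style of Remark~\ref{rem:explicit.kappa}.

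There is essentially no obstacle: once Lemma~\ref{lem:reduced-Onsager} and the explicit differentials of $\sfE$ are at hand, the verification is purely algebraic matrix manipulation. The only point requiring care is the differentiation under the integral that justifies the formulas for $\rmD_\Sigma\sfE$ and $\rmD_m\sfE$; this is standard under mild integrability of $\nabla\log\target$ and $\nabla_x^2\log\target$ against every Gaussian $\sfG(\Sigma,m)$, which is implicit in the assumption that $\target$ has a sufficiently regular Lebesgue density.
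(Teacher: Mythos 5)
Your proposal is correct and follows the paper's own route exactly: the paper likewise obtains \eqref{eq:general-target} by inserting the displayed differentials of $\sfE$ (with $\Gamma^{-1}$ reinterpreted as $-\int\nabla_x^2\log\target\,\sfG\,\dd x$) into $\dot\sfp=-\bbK^{\mathrm{red}}_{\alpha,\beta}(\sfp)\rmD\sfE(\sfp)$ using Lemma~\ref{lem:reduced-Onsager}, and solves the mass equation by the same linear substitution as in Remark~\ref{rem:explicit.kappa}. One small point: carrying out your integrating-factor computation for $\dot f+\beta f=-\beta h$ literally yields a factor $\beta$ multiplying the integral inside the exponential, which the formula as printed in the theorem (and in Remark~\ref{rem:explicit.kappa}) omits — so your derivation is the correct one and the displayed formula appears to have a typographical slip rather than your proof having a gap.
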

Similarly, for the $\SHK_{\alpha,\beta}$-flow one obtains the reduced coupled system \eqref{eq:HK.Gauss.mass-shape-ode-covariance}--\eqref{eq:HK.Gauss.mass-shape-ode-mean} on $\mfP_1$ with the driving energy $\sfE_1(\sfp) = \calH_\rmB(\sfG(\Sigma,m)|\target)$.
The gradient system \eqref{eq:general-target} with $\alpha=0$ or $\beta=0$ have appeared in the context of Kalman filtering and state estimation, variational Gaussian approximations and information geometry (cf.\ \cite{opper-archambeau2009,Pidstrigach-Reich2023,sarkka2007} and references therein).

\section{Geodesic convexity of the reduced gradient system}
\label{se:GeodesicConvexity}

Geodesic convexity is a powerful geometric tool, provided it is applicable. For the pure transport case, i.e., the Otto-Wasserstein geometry, where it is also called displacement convexity according to \cite{Mcca97CPIG}, there is a large amount of literature (see, e.g., \cite{OttVil00GITL,Vill03TOT,AmGiSa05GFMS}) concerning the related functional inequalities. The corresponding theory for the Hellinger-Kantorovich geometry was only recently developed in \cite{LiMiSa23FPGG}. 

Here, we investigate how the concept of geodesic $\uplambda$-convexity applies in the reduced setting of the Gaussian family and the relative Boltzmann entropy with a Gaussian target measure $\target$.
We will restrict to the case of normalized Gaussians $\mfG_1$ because in the case with varying mass, global geodesic $\uplambda$-convexity cannot be expected, see Remark \ref{rem:GLCvx.ScaGauss}.

\subsection{Geodesic convexity and the metric Hessian}
\label{se:GeodCvxHessian}

Working with the finite-dimensional gradient system $(\mfP,\sfE,\bbK^\mathrm{red}_{\alpha,\beta})$, we can define the metric Hessian without the technical difficulties arising 
in the  full Hellinger-Kantorovich setting for $(\calM(\R^d),\calE, \HK_{\alpha,\beta})$ 
or $(\calP(\R^d),\calE, \SHK_{\alpha,\beta})$. 

For any smooth finite-dimensional gradient system $(\mfQ,\sfF,\bbK)$, the metric Hessian (in its co-variant form) is a symmetric tensor $\HESS_{\bbK}\sfF(\sfq): \rmT^*_\sfq\mfQ \to \rmT_\sfq \mfQ$ defined in terms of the quadratic form  
\begin{equation}
    \label{eq:Def.Hessian}
\begin{aligned}
    \big\langle \xi, \HESS_{\bbK} \sfF(\sfq) \xi \big\rangle
&:= \langle \xi,\bbK(\sfq)\rmD^2\sfF(\sfq) \bbK(\sfq)\xi\rangle \\
 &\quad + \langle \xi,\rmD \bbK(\sfq)[\bbK(\sfq)\xi]\rmD \sfF(\sfq)\rangle - \frac12 \langle \xi, \rmD \bbK(\sfq)[\bbK(\sfq)\rmD\sfF(\sfq)] \xi\rangle,
\end{aligned}
\end{equation}
where $\rmD\bbK(\sfq)[v]$ is the directional derivative $\lim_{s\to 0} \frac1s 
\big(\bbK(\sfq{+}sv){-}\bbK(\sfq)\big)$ (in local coordinates). 
Often it is advantageous to define the vector field $\bfV(\sfq)=\bbK(\sfq)\rmD \sfF(\sfq)$, then one has the slightly simpler formula 
\begin{equation}
    \label{eq:HessianSimpler}
 \big\langle \xi, \HESS_{\bbK} \sfF(\sfq) \xi \big\rangle
 = \langle \xi,\rmD \bfV(\sfq)[ \bbK(\sfq)\xi] \rangle 
  - \frac12 \langle \xi, \rmD \bbK(\sfq)[\bfV(\sfq)] \xi\rangle, 
\end{equation}

Its relation to convexity along geodesics is immediate; we refer to \cite{OttVil00GITL,OttWes05ECCW,DanSav08ECDC,LieMie13GSGC} for details. For this, we recall the geodesic equations for constant-speed geodesics $s\mapsto \sfg_s \in \mfQ$: 
\begin{equation}
    \label{eq:GeodEqns}
  \dot \sfg_s = \bbK(\sfg_s) \xi_s, \qquad   \dot  \xi_s = - \frac12\langle \xi_s, \rmD \bbK(\sfg_s)[\, \cdot \,] \xi_s \rangle, 
\end{equation}
which are Hamilton's equation for the Hamiltonian $\sfH(\sfq,\xi)=\frac{1}{2}\langle \xi,\bbK(\sfq)\xi\rangle$. 

\begin{remark}
Although we will not use them explicitly, we state the equations for the 
constant-speed geodesics, considering the general case of scaled Gaussians
parameterized by the set $\mfP =[0,\infty)\mfP_1$. Using $\sfp_s = 
(\Sigma_s,m_s,\kappa_s)$ and $\xi_s=(S_s,\mu_s,k_s)$ with $s\in [0,1]$ 
being the arclength parameter, we find 
\begin{subequations}
    \label{eq:GaussGeodEqn}
\begin{align}
\dot \Sigma_s &= \frac{2\alpha}{\kappa_s}(S_s\Sigma_s {+} \Sigma_s S_s) + \frac{2\beta}{\kappa_s}\Sigma_s S_s\Sigma_s , \\
\dot m_s &=\frac{\alpha}{\kappa_s}\mu_s + \frac{\beta}{\kappa_s}\Sigma_s\mu_s, \hspace{13em}
 \dot \kappa_s=0+\beta \kappa_s k_s ,
\\
\dot S_s&=\frac{2\alpha}{\kappa}_s S_sS_s -\beta\Big(\frac{2}{\kappa_s}S\Sigma_s S_s +\frac{1}{2\kappa_s}\mu_s\otimes\mu_s\Big), \hspace{3em}
\dot \mu_s =0, \\
\dot k_s & = -\frac{\alpha}{\kappa_s^2}\Big(2\trace[S_s^2\Sigma_s] +\frac{1}{2}|\mu_s|^2\Big)
+\frac{\beta}{2\kappa_s^2}\Big(\mu_s{\cdot}\Sigma_s\mu_s + 2\trace[S_s\Sigma_s S_s\Sigma_s]\Big) +\frac{\beta}{2} k_s^2.
\end{align}
\end{subequations}
The equations on $\mfP_1$ are obtained by setting $\kappa_s\equiv 1$ and dropping the equations for $k_s$. 
\end{remark}

The global $\uplambda$-convexity of $\calE $ in the space $(\mfQ,\bbK)$ means that for all (constant-speed) geodesics the function $s \mapsto \sfF(\sfg_s)$ is $\uplambda$-convex, i.e., its second derivative is bounded from below by $\uplambda$. However, we compute 
\[
\frac{\dd }{\dd s}\sfF(\sfg_s) 
=\langle \rmD\sfF(\sfg_s), \dot \sfg_s\rangle =\langle \rmD\sfF(\sfg_s), \bbK(\sfg_s)\xi_s \rangle = \langle \xi_s, \bfV(\sfg_s)\rangle . 
\]
Thus, taking another derivative with respect to $s$ gives 
\[
\frac{\dd^2}{\dd s^2} \sfF(\sfg_s) = \langle \xi_s,\rmD \bfV(\sfg_s)[\dot \sfg_s]\rangle + \langle \dot\xi_s,\bfV(\sfg_s)\rangle = \langle \xi_s , \HESS_\bbK \sfF(\sfg_s) \,\xi_s\rangle. 
\]
Thus, \emph{(global) geodesic $\uplambda$-convexity} reduces to the condition 
\begin{equation}
    \label{eq:Def.GLCvx}
    \forall\, \sfq\in \mfQ,\, \forall \,\xi \in \rmT_\sfq\mfQ:\quad 
    \langle \xi, \HESS_\bbK\sfF(\sfq)\,\xi\rangle \geq \uplambda \langle \xi, \bbK(\sfq)\xi\rangle. 
\end{equation}
The last estimate can also be written in short as $\HESS_\bbK\sfF(\sfq) \geq \uplambda\, \bbK(\sfq)$, which is meant in the sense of quadratic forms. 

We will also use the notion of \emph{sublevel geodesic semi-convexity}, which is defined as follows:
\begin{equation}
    \label{eq:SublGLCvx}
\begin{aligned}
    & \forall \, E\in \R,\, \exists\, \wt\uplambda(E)\in \R,\, 
    \forall \, q\in \mfQ \text{ with }\sfF(\sfq)\leq E 
    :\quad   
   \HESS_\bbK\sfF(\sfq) \geq \wt\uplambda(E) \bbK(\sfq). 
\end{aligned}
\end{equation}
The latter condition can be used to control the expansion/contraction properties 
between two solutions of the gradient-flow equations starting in the same sublevel, 
see \cite{MiSaTs25?SEVI}.   

\subsection{The Hessian for the Gaussian family } 
\label{su:HessGauss}

We now return to the normalized Gaussian gradient system $(\mfP_1,\sfE_1,
\bbK^\mathrm{red}_{\alpha,\beta})$ with driving energy $\sfE_1(\sfp)=\sfH(\sfp|\Gamma,n)$, $\sfp=(\Sigma,m)$, and introduce the shorthand  
\[
	\bbH_{\alpha,\beta}(\sfp)= \HESS_{\bbK^\mathrm{red}_{\alpha,\beta}} \sfE_1 (\sfp). 
\]
From the definition \eqref{eq:Def.Hessian}, we know that $\bbH_{\alpha,\beta}$ is linear in $\sfE_1$ and quadratic in $ \bbK^\mathrm{red}_{\alpha,\beta}$. Hence, we can write 
\[
 \bbH_{\alpha,\beta}= \alpha^2 \bbH_\Ot + \alpha\beta \bbH_\mafo{mix} 
  + \beta^2 \bbH_\SHe. 
\]

For the explicit calculation, we recall from Theorem~\ref{thm:reduced-SHK} the reduced operators
\[
\bbK^\mathrm{red}_{\alpha,\beta}(\sfp)= \alpha \bbK^\mathrm{red}_\Ot(\sfp) + \beta
\bbK^\mathrm{red}_{\SHe}(\sfp) 
= \alpha   \bma{@{}ccc@{}} 2\big(
\Box \Sigma + \Sigma \Box\big) &0 \\ 0 & \Box \ema
+ \beta  \bma{@{}ccc@{}} 2\Sigma \Box \Sigma &0 \\ 
0 & \Sigma \Box \ema ,
\]
The driving forces $\rmD\sfE_1$ and the vector field $\bfV= \bbK^\mathrm{red}_{\alpha,\beta} \rmD\sfE_1 $ take the form 
\[
  \rmD \sfE_1(\sfp) 
 = \binom{ 2 ( \Gamma^{-1} {-} \Sigma^{-1}) } 
    { \Gamma^{-1} (m{-}n)} \quad\text{and}\quad 
    \bfV(\sfp)= \alpha
 \binom{\Gamma^{-1}\Sigma{+}\Sigma \Gamma^{-1}{-}2I}{\Gamma^{-1} (m{-}n) } 
  + \beta \binom{\Sigma\Gamma^{-1}\Sigma{-}\Sigma}{\Sigma\Gamma^{-1} (m{-}n) } .
\]
A direct calculation using formula \eqref{eq:HessianSimpler} leads to the 
following results:
\begin{subequations}
 \label{eq:ExplicHessians}
\begin{align}
    \label{eq:ExplicHessian.a}
    \langle \eta, \bbH_\Ot(\sfp)\eta\rangle
& = 4\trace (S\Gamma^{-1}S\Sigma) +4|S|^2 + \mu\cdot\Gamma^{-1}\mu .
\\
  \nonumber
  \langle \eta, \bbH_\mafo{mix}(\sfp)\eta\rangle
  &= 2 \trace\Big( 2\Gamma^{-1}(\Sigma S \Sigma S{+} S\Sigma S \Sigma {+} \Sigma S S \Sigma\big) - S\Sigma S \Sigma  \Big)
 \\   
  \label{eq:ExplicHessian.b}
    &\quad + 2 \mu\cdot(\Sigma S{+}S\Sigma) \Gamma^{-1}(m{-}n) + \mu\cdot( \Sigma\Gamma^{-1}{+}I) \mu,
\\
\nonumber
\langle \eta, \bbH_\SHe(\sfp)\eta\rangle
& =2\trace (S\Sigma S\Sigma\Gamma^{-1}\Sigma ) 
+ 2\mu\cdot\Sigma S\Sigma \Gamma^{-1}(m{-} n)
\\
&   \label{eq:ExplicHessian.c}
 \quad +\frac{1}{2}\mu\cdot(\Sigma\Gamma^{-1}\Sigma{+}\Sigma)\mu,\qquad \eta=(S,\mu)^\top . 
\end{align}
\end{subequations}

With these results, we can now make our first statement on geodesic 
$\uplambda$-convexity. For $\beta>0$ there is no $\uplambda$-convexity, whereas 
for $\beta=0$ we have the well-known result 
that $\uplambda= \alpha\nu_\mafo{min}(\Gamma^{-1})$, cf.\ \cite{bakryDiffusionsHypercontractives1985}.

\begin{proposition}[Geodesic $\uplambda$-convexity for {$(\mfP_1,\sfE_1, 
\bbK^\mathrm{red}_{\alpha,\beta}) $}]
\label{prop:mfP1.non.GLC}
For $(\mfP_1,\sfE_1,\bbK^\mathrm{red}_{\alpha,\beta})$ we have geodesic 
$\uplambda$-convexity if and only if
\[
  \beta=0 \quad \text{and} \quad \uplambda \leq \alpha \nu_\mafo{min}(\Gamma^{-1}) .
\]
\end{proposition}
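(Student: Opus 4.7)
My plan is to unpack condition \eqref{eq:Def.GLCvx} using Lemma~\ref{lem:reduced-Onsager} and the explicit Hessian formulas \eqref{eq:ExplicHessians}: geodesic $\uplambda$-convexity amounts to the pointwise quadratic-form inequality $\bbH_{\alpha,\beta}(\sfp) \succeq \uplambda\,\bbK^\mathrm{red}_{\alpha,\beta}(\sfp)$ on $\rmT^*_\sfp\mfP_1$ at every $\sfp = (\Sigma, m) \in \mfP_1$. Both sides are concrete quadratic forms in $\eta = (S, \mu)$, and the decisive structural fact is that $\bbK^\mathrm{red}_\Ot$ and $\bbK^\mathrm{red}_\SHe$ are \emph{block-diagonal} in $(S,\mu)$, while $\bbH_\mafo{mix}$ and $\bbH_\SHe$ carry off-diagonal $(S,\mu)$-couplings that are linear in $(m-n)$.

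For the sufficient direction ($\beta = 0$), both $\bbH_{\alpha,0} = \alpha^2\bbH_\Ot$ and $\bbK^\mathrm{red}_{\alpha,0} = \alpha\bbK^\mathrm{red}_\Ot$ are block-diagonal in $(S,\mu)$, so after dividing by $\alpha>0$ the required inequality decouples into a covariance part
\[
\alpha\bigl[\trace(S\Gamma^{-1}S\Sigma) + |S|^2\bigr] \geq \uplambda\,\trace(S^2\Sigma)
\]
and a mean part $\alpha\,\mu\cdot\Gamma^{-1}\mu \geq \uplambda\,|\mu|^2$. Using the identities $\trace(S\Gamma^{-1}S\Sigma) = \|\Gamma^{-1/2}S\Sigma^{1/2}\|_F^2$ and $\trace(S^2\Sigma) = \|S\Sigma^{1/2}\|_F^2$, the covariance inequality follows (with slack given by the $|S|^2$ term) from the matrix Rayleigh bound $\|\Gamma^{-1/2}X\|_F^2 \geq \nu_\mafo{min}(\Gamma^{-1})\|X\|_F^2$, while the mean inequality is the ordinary Rayleigh bound in $\R^d$. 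Conversely, taking $S = 0$ together with $\mu$ a minimum eigenvector of $\Gamma^{-1}$ already forces $\uplambda \leq \alpha\,\nu_\mafo{min}(\Gamma^{-1})$, giving optimality.

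For the failure when $\beta > 0$, my strategy is to drive the generalized Rayleigh quotient $\langle \eta, \bbH_{\alpha,\beta}(\sfp)\eta\rangle/\langle \eta, \bbK^\mathrm{red}_{\alpha,\beta}(\sfp)\eta\rangle$ to $-\infty$ by exploiting that the cross contributions $2\mu\cdot(\Sigma S + S\Sigma)\Gamma^{-1}(m-n)$ and $2\mu\cdot\Sigma S\Sigma\Gamma^{-1}(m-n)$ in \eqref{eq:ExplicHessian.b}--\eqref{eq:ExplicHessian.c} are linear in $(m-n)$, proportional to $\beta$, and indefinite in $(S,\mu)$, whereas $\bbK^\mathrm{red}_{\alpha,\beta}$ carries no $S$-$\mu$ coupling at all. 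Concretely, I would fix $\sfp = (\Sigma, m)$ with $m \neq n$, pick a unit eigenvector $v$ of $\Sigma$ (with eigenvalue $\lambda$) chosen so that $v\cdot\Gamma^{-1}(m-n) \neq 0$, and restrict to the two-parameter family $\eta = (\sigma\,v\otimes v,\, \tau v)$. The form $\bbH_{\alpha,\beta}(\sfp) - \uplambda\,\bbK^\mathrm{red}_{\alpha,\beta}(\sfp)$ then collapses to a $2\ti 2$ matrix in $(\sigma,\tau)$ whose diagonal entries are polynomials in $\lambda$, $\Gamma$, $\alpha, \beta, \uplambda$ and are \emph{independent} of $m-n$, while the off-diagonal entry is proportional to $\beta(2\alpha + \beta\lambda)\lambda\,v\cdot\Gamma^{-1}(m-n)$. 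By varying $\sfp$ so that $|m-n|\to\infty$, this off-diagonal term eventually dominates the product of the diagonals, destroying positive semi-definiteness for every fixed $\uplambda$.

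The main technical point is to ensure the rank-one ansatz activates a genuinely nonzero cross coefficient, which reduces to checking $\lambda\,v\cdot\Gamma^{-1}(m-n) \neq 0$; since $\Sigma$ is positive definite and $m \neq n$, choosing $v$ not orthogonal to $\Gamma^{-1}(m-n)$ is always possible, and the remaining manipulations are algebraic evaluations of \eqref{eq:ExplicHessians} and Lemma~\ref{lem:reduced-Onsager}.
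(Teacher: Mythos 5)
Your proposal is correct and follows essentially the same route as the paper: for $\beta=0$ the decoupled Rayleigh-quotient estimates give $\uplambda\le\alpha\,\nu_\mafo{min}(\Gamma^{-1})$ with sharpness from $S=0$ and $\mu$ a minimal eigenvector, and for $\beta>0$ the obstruction is exactly the $(m{-}n)$-linear cross term \eqref{eq:DisturbTerm}, which appears in $\bbH_{\alpha,\beta}$ but has no counterpart in $\bbK^\mathrm{red}_{\alpha,\beta}$. Your rank-one ansatz and $2\ti2$ determinant computation is simply a more explicit packaging of the paper's observation that, with $(\Sigma,S,\mu)$ fixed, this term drives $\langle\eta,\bbH_{\alpha,\beta}\eta\rangle$ to $-\infty$ as $|m{-}n|\to\infty$.
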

\begin{proof}
We first consider the case $\beta>0$, then we observe that $\bbH_{\alpha,\beta}$ 
contains the term 
\begin{equation}
    \label{eq:DisturbTerm}
  2\mu \cdot \big( \alpha\beta(\Sigma S{+}S\Sigma) + \beta^2 \Sigma S\Sigma\big) 
  \Gamma^{-1} (m{-}n) . 
\end{equation}
whereas $m$ does not appear elsewhere in $\bbH_{\alpha,\beta}$ or 
$ \bbK^\mathrm{red}_{\alpha,\beta}$. Thus, keeping $\Sigma$, $S$, and $\mu$ fixed, the above term, and hence the left-hand side in \eqref{eq:Def.GLCvx}, can be made arbitrarily small by choosing $m$ suitably. Hence, there is no geodesic 
$\uplambda$-convexity.

In the case $\beta=0$, we are left with the estimate 
\[
\alpha^2 \big( 4\trace(\Gamma^{-1}S\Sigma S)) + 4 |S|^2
 + \mu{\cdot} \Gamma^{-1} \mu\big) 
\geq \alpha \uplambda \big( 4 \trace(S\Sigma S) + |\mu|^2\big)  ,
\]
and the result follows by standard estimates. 
\end{proof}

\begin{remark}[Centered Gaussians]
    \label{rem:GLC.centeredGauss}
A more positive result can be obtained by restricting the analysis to centered Gaussians, namely 
\[
\mfP_{1,\mafo{cent}} = \big\{ \sfp=(\Sigma,m)\in \mfP_1 \:\big| \: m=0\,\big\}. 
\]
With the target measure $\target=G(\Gamma,0)$, the disturbing term in \eqref{eq:DisturbTerm} disappears and it should be possible to show that there is global geodesic $\uplambda$-convexity for some $\uplambda = \uplambda_\mafo{cent}(\alpha,\beta,\Gamma) \in \R$.

Indeed, for $\beta=0$ we have $\uplambda_\mafo{cent}>0 $ by Proposition \ref{prop:mfP1.non.GLC}. For $\alpha=0$ one easily obtains $\uplambda=0$, but in the case $\alpha \beta>0$ one expects $\uplambda_\mafo{cent}(\alpha,\beta,\Gamma) <0 $.
\end{remark}

\subsection{Sublevel geodesic semi-convexity and scaled Gaussians}
\label{su:SublevelGLC}

As shown above, we have a Hessian $\bbH_{\alpha,\beta}$ that is well-defined on all of $\mfP_1$. To establish sublevel geodesic semi-convexity, it is hence sufficient to show that the sublevels define suitably compact sets bounding $\bbH_{\alpha,\beta}$ from below and $\bbK^\mathrm{red}_{\alpha,\beta}$ from above. 

\begin{lemma}[Sublevels of $\sfE_1$]
\label{lem:Sublevel} 
For fixed $(\Gamma,n)\in \mfP_1$,
\begin{equation}
    \label{eq:Sublevel}
  \forall E>0,\, \exists\, r_E>0,\, \forall\,\sfp=(\Sigma,m) \text{ with }\sfE_1(\sfp)\leq E
  : \ \ |m{-}n|,\, |\Sigma|,\, |\Sigma^{-1}| \leq r_E. 
\end{equation}
\end{lemma}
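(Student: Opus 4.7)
The strategy is to split $\sfE_1$ into a mean part and a covariance part, both of which are separately non-negative, and bound each from below in a way that controls the respective parameters.

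More precisely, introduce the quantities
\[
\sfH_\rmm(m) := \tfrac12 (m{-}n)\cdot \Gamma^{-1}(m{-}n), \qquad
\sfH_\mafo{cov}(\Sigma) := \tfrac12 \trace\!\big(\Gamma^{-1}\Sigma - I\big) - \tfrac12 \log\!\big(\det(\Gamma^{-1}\Sigma)\big),
\]
so that $\sfE_1(\sfp) = \sfH_\rmm(m) + \sfH_\mafo{cov}(\Sigma)$. The first summand is clearly non-negative, and the second is the standard relative Boltzmann entropy between two centered normalized Gaussians with covariances $\Sigma$ and $\Gamma$, hence also non-negative (this also follows from convexity of $r\mapsto r-1-\log r$ on $(0,\infty)$, which attains its minimum $0$ at $r=1$). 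Therefore both terms are individually bounded by $E$.

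From $\sfH_\rmm(m)\leq E$ and positive definiteness of $\Gamma^{-1}$, one immediately obtains
\[
|m-n|^2 \;\leq\; \frac{2E}{\nu_\mafo{min}(\Gamma^{-1})},
\]
which gives the desired bound on $|m-n|$. For the covariance, let $b_1,\ldots,b_d>0$ denote the eigenvalues of the similarity-conjugate $B := \Gamma^{-1/2}\Sigma\,\Gamma^{-1/2}$, which has the same spectrum as $\Gamma^{-1}\Sigma$. Diagonalizing yields
\[
\sfH_\mafo{cov}(\Sigma) \;=\; \tfrac12 \sum_{i=1}^d \varphi(b_i), \qquad \varphi(r) := r-1-\log r.
\]
The function $\varphi:(0,\infty)\to[0,\infty)$ is strictly convex with $\varphi(1)=0$ and $\varphi(r)\to\infty$ as $r\to 0^+$ or $r\to\infty$. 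Hence, for any $E>0$ the sublevel set $\{\varphi\leq 2E\}$ is a compact subinterval $[b_-(E),b_+(E)]\subset(0,\infty)$.

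Since each $\varphi(b_i)\leq 2\sfH_\mafo{cov}(\Sigma)\leq 2E$, every eigenvalue of $B$ lies in $[b_-(E),b_+(E)]$. From $\Sigma = \Gamma^{1/2}B\Gamma^{1/2}$ and the spectral bounds on $\Gamma$ and $\Gamma^{-1}$, this yields
\[
\nu_\mafo{min}(\Sigma)\geq b_-(E)\,\nu_\mafo{min}(\Gamma), \qquad
\nu_\mafo{max}(\Sigma)\leq b_+(E)\,\nu_\mafo{max}(\Gamma),
\]
and hence uniform bounds on $|\Sigma|$ and $|\Sigma^{-1}|$ depending only on $E$ and $\Gamma$. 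Taking $r_E$ to be the maximum of the three bounds gives \eqref{eq:Sublevel}.

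There is no real obstacle here; the only mildly delicate point is identifying the covariance part of $\sfE_1$ with $\tfrac12\sum_i\varphi(b_i)$ and then invoking the coercivity of $\varphi$ at both ends of $(0,\infty)$ to simultaneously control the largest \emph{and} smallest eigenvalues of $\Sigma$.
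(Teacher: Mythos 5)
Your proof is correct and follows essentially the same route as the paper: split $\sfE_1$ into the mean and covariance parts, bound $|m-n|$ via the quadratic form, and control the eigenvalues of $B=\Gamma^{-1/2}\Sigma\Gamma^{-1/2}$ through the coercivity of $\varphi(r)=r-1-\log r$ at both ends of $(0,\infty)$. The only difference is that the paper makes the interval $[b_-(E),b_+(E)]$ explicit, namely $\ee^{-(1+2E)}\le b_j\le 1+\log 4+4E$, which it later reuses in the sublevel P\L{} estimates; your abstract compactness argument suffices for the lemma itself.
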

\begin{proof}
The additive structure of $\sfE_1$ implies that $\frac12(m{-}n)\cdot \Gamma^{-1}(m{-}n)\leq E$ which gives $|m{-}n| \leq (2\nu_\mafo{max}(\Gamma)E)^{1/2}=: r_1$.

Setting $\sfF(B)= \trace(B{-}I) - \log\det B\geq0$ for symmetric matrices $B$, we find $\sfF(B) = \sum_{i=1}^d ( b_j {-} 1 {-} \log b_j)$, where $(b_j)_j$ are the eigenvalues of $B$. Using $B= \Gamma^{-1/2} \Sigma \Gamma^{-1/2}$ and $\sfE_1(\Sigma,m)\leq E$, we have  $b_j{-}1{-}\log b_j\leq 2E$ for all $j$, implying the estimate $\ee^{-(1+2E)} \leq b_j \leq  1+ \log 4 + 4E$. Thus, we conclude 
\[
	|\Sigma| \leq |\Gamma| (1+ \log 4 + 4E)=:r_2 \quad \text{and} \quad 
	|\Sigma^{-1}| \leq |\Gamma^{-1}| \ee^{1+2E}=:r_3,
\]
and the assertion is proved with $r_E:=\max\{r_1,r_2,r_3\}$.  
\end{proof}
Note that, in the proof, the use of the auxiliary matrix $B= \Gamma^{-1/2} 
\Sigma \Gamma^{-1/2}$ can also be viewed through the lens of a coordinate
transformation: $x\mapsto \Gamma^{-1/2} (x - n).$
Additionally, define the notation $\widetilde m := \Gamma^{-1/2} (m - n)$.
Under this transformation, we can simplify the relative Boltzmann entropy to 
have the standard Gaussian target measure
$$
\sfE_1(\sfp)= \sfH(\Sigma,m|\Gamma,n)
= \sfH( B, \widetilde m | I, 0)
= \frac12 \left(
  |\widetilde m|^2 + \underbrace{\trace(B - I) - \log {|B|}}_{\sfF(B)}\right).
$$

With this result, we can now establish the desired sublevel geodesic semi-convexity. 

\begin{proposition}[Sublevel geodesic semi-convexity]
\label{prop:SublevelGLC} The gradient system $ \big( \mfP_1,\sfE_1, 
\bbK^\mathrm{red}_{\alpha,\beta}\big) $ is sublevel geodesically semi-convex 
as defined  in \eqref{eq:SublGLCvx}.
\end{proposition}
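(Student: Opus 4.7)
The plan is to combine the explicit Hessian formulas \eqref{eq:ExplicHessians} with the sublevel compactness provided by Lemma~\ref{lem:Sublevel}. Fix $E>0$ and let $r_E$ be the radius from that lemma, so that $|m{-}n|,\, |\Sigma|,\, |\Sigma^{-1}| \leq r_E$ whenever $\sfE_1(\sfp)\leq E$. As a consequence, every polynomial expression in $\Sigma$, $\Sigma^{-1}$, $m{-}n$ (with $\Gamma$, $n$, $\alpha$, $\beta$ fixed) appearing in $\bbH_{\alpha,\beta}$ or $\bbK^\mathrm{red}_{\alpha,\beta}$ is uniformly bounded by a constant depending only on $E$, and conversely $\Sigma$ admits uniform two-sided bounds $r_E^{-1}I \leq \Sigma \leq r_E I$.

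First I would exploit the block-diagonal structure of $\bbK^\mathrm{red}_{\alpha,\beta}(\sfp)$ from Theorem~\ref{thm:reduced-SHK}: for $\eta=(S,\mu)$,
\[
\langle \eta, \bbK^\mathrm{red}_{\alpha,\beta}(\sfp)\eta\rangle = \alpha\bigl(4\trace(S\Sigma S)+|\mu|^2\bigr) + \beta\bigl(2\trace(S\Sigma S\Sigma) + \mu{\cdot}\Sigma\mu\bigr).
\]
On the sublevel set this quadratic form is sandwiched between two multiples of $|S|^2+|\mu|^2$, i.e.\ $c_E^{-}\bigl(|S|^2+|\mu|^2\bigr) \leq \langle \eta, \bbK^\mathrm{red}_{\alpha,\beta}(\sfp)\eta\rangle \leq c_E^{+}\bigl(|S|^2+|\mu|^2\bigr)$, with constants $c_E^{\pm}>0$ depending only on $E$.

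Next I would inspect the quadratic form $\eta \mapsto \langle \eta, \bbH_{\alpha,\beta}(\sfp)\eta\rangle$ provided by \eqref{eq:ExplicHessians}. It decomposes into pure quadratic terms in $S$ and in $\mu$ (all with bounded coefficients) and cross terms bilinear in $S$ and $\mu$ which carry the factor $\Gamma^{-1}(m{-}n)$ and originate from \eqref{eq:ExplicHessian.b}--\eqref{eq:ExplicHessian.c}. These cross terms all have the shape $\mu \cdot M(S,\Sigma)\Gamma^{-1}(m{-}n)$ with $M$ linear in $S$ and polynomial in $\Sigma$, hence by Cauchy--Schwarz and Young's inequality
\[
\bigl|\mu \cdot M(S,\Sigma)\Gamma^{-1}(m{-}n)\bigr| \leq |\mu|^2 + C_E\,|S|^2 .
\]
Bounding the remaining pure quadratic terms by the absolute values of their coefficients then yields a uniform lower bound
\[
\langle \eta, \bbH_{\alpha,\beta}(\sfp)\eta\rangle \geq -C(E)\bigl(|S|^2+|\mu|^2\bigr)\geq -\frac{C(E)}{c_E^{-}}\,\langle \eta, \bbK^\mathrm{red}_{\alpha,\beta}(\sfp)\eta\rangle,
\]
so \eqref{eq:SublGLCvx} holds with $\wt\uplambda(E):=-C(E)/c_E^{-}$.

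The one subtlety, and the only place where anything nontrivial happens, is the handling of the cross terms bilinear in $S$ and $\mu$: these are precisely the terms that ruin global $\uplambda$-convexity in Proposition~\ref{prop:mfP1.non.GLC}, and they can be absorbed here only because the sublevel bound on $|m{-}n|$ kills the arbitrariness of $m$ that was used to defeat convexity globally. Beyond this, the argument is mostly bookkeeping, relying on the facts that (i) Lemma~\ref{lem:Sublevel} makes all matrix coefficients depend only on $E$, and (ii) the block-diagonal form of $\bbK^\mathrm{red}_{\alpha,\beta}$ lets the Young estimate on the cross terms be absorbed into both $|S|^2$ and $|\mu|^2$ separately.
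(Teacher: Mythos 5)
Your proposal is correct and follows essentially the same route as the paper's proof: use Lemma~\ref{lem:Sublevel} to get the two-sided bounds on $\Sigma$ and the bound on $m{-}n$, deduce $\langle\eta,\bbK^\mathrm{red}_{\alpha,\beta}(\sfp)\eta\rangle\ge c_E(|S|^2{+}|\mu|^2)$ and $\langle\eta,\bbH_{\alpha,\beta}(\sfp)\eta\rangle\ge -H_E(|S|^2{+}|\mu|^2)$, and take $\wt\uplambda(E)=-H_E/c_E$. Your additional discussion of absorbing the cross terms via Cauchy--Schwarz and Young is a correct elaboration of the step the paper leaves implicit.
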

\begin{proof}
We fix $E>0$ and restrict to $\sfp=(\Sigma,m)$ with $\sfE_1(\sfp)\leq E$. Then, Lemma \ref{lem:Sublevel} provides a lower bound $\Sigma \geq r_E^{-1} I$. With this, we obtain the lower bound 
\[
\langle \eta,\bbK^\mathrm{red}_{\alpha,\beta}(\sfp)\eta\rangle
\geq c_E \big(|S|^2+ |\mu|^2\big) 
\quad \text{ for all $\eta=(S,\mu)^\top$} 
\]
for some small $c_E>0$. Using the upper bounds for $\Sigma$ and $(m{-}n)$ from Lemma \ref{lem:Sublevel}, we also find the lower bound
\[
\langle \eta,\bbH_{\alpha,\beta}(\sfp)\eta\rangle
\geq -H_E \big(|S|^2+ |\mu|^2\big)
\quad \text{ for all $\eta=(S,\mu)^\top$},
\]
for some large $H_E\geq 0$. This implies 
\[
\bbH_{\alpha,\beta}(\sfp)
\geq \uplambda\,\bbK^\mathrm{red}_{\alpha,\beta}(\sfp),\quad \text{with\; $\uplambda=- H_E/c_E$},
\]
which is the desired assertion.
\end{proof}

\begin{remark}[No geodesic $\uplambda$-convexity for scaled Gaussians]
    \label{rem:GLCvx.ScaGauss}
If we consider scaled Gaussians parametrized by $\mfP$ instead of normalized 
Gaussians in $\mfP_1$, we can use the well-known fact that the geodesic curves 
in $(\calM(\R^d), \HK_{\alpha,\beta}) $ connecting any measure $\mu=\mu_1$ with 
the trivial measure $\mu_0=0$ is given by the Hellinger geodesic 
$s\mapsto \mu_s = s^2 \mu_1$ (cf.\ \cite{LiMiSa16OTCR}). 
If $\mu_1$ is a (scaled) Gaussian measure, then so are all $\mu_s$. Hence, 
in the parameter space $\mfP$ we have the explicit geodesic 
$\gamma_s=(\Sigma,m,s^2 \kappa)$. 

Considering now the reduced Boltzmann entropy $\sfE(\gamma_s)$, we find
\[
e(s):=\sfE(\gamma_s)= \sfH(\Sigma,m|\Gamma,n) + \varkappa \lambda_\rmB \big(\frac{s^2 \kappa}\varkappa\big). 
\]
Since $ \ddot e(s) = 4\kappa + 2\kappa\log\big(s^2\kappa/\varkappa\big)\to - \infty$ for $s\searrow 0$, we see that geodesic $\uplambda$-convexity fails. 

We may still have a restricted version of sublevel $\uplambda$-convexity if we only consider sublevels that do not contain the trivial measure $\mu=0$. More precisely, if $\target=\varkappa G(\Gamma,n)$, then $\calH_\rmB(0|\target)=\varkappa$, and it can be shown that for $E\in (0,\varkappa)$, there exists $\wt\uplambda(E)$ such that for $\sfp=(\Sigma,m,\kappa)\in \mfP$ with $\sfE(\sfp)\leq E$, we have $\HESS_{\bbK^\mathrm{red}} \sfE(\sfp)\geq \wt\uplambda(E) \bbK^\mathrm{red} (\sfp)$. 
\end{remark}

\section{Long-time asymptotic decay estimates}
\label{se:LongTime}

In this section, we investigate the long-time behavior of solutions to the $\HK_{\alpha,\beta}$-Boltzmann gradient flow over the Gaussian measures. 

\subsection{Polyak-\L ojasiewicz estimate for Gaussian target measures}
\label{su:PolLojEstim}

As seen above, there is little hope of obtaining good long-time asymptotic decay estimates for the $\big(\mfP_1,\sfE_1, \bbK^\mathrm{red}_{\alpha,\beta}\big)$-gradient flow solutions based on geodesic $\uplambda$-convexity, since we rarely have $\uplambda \gneqq 0$. However, exponential decay can be derived already from the weaker Polyak-\L ojasiewicz estimate---also called the PL or \emph{gradient dominance} condition (see \cite{karimiLinearConvergenceGradient2020,carrillo2024fisher}). 
Its global form reads:
\begin{equation}
    \label{eq:PolLoj.gen}
    \big\langle \rmD \sfE_1(\sfp), \bbK^\mathrm{red}_{\alpha,\beta}(\sfp) 
    \rmD \sfE_1(\sfp) \big\rangle \geq c_{\rmP\rmL} \,\sfE_1(\sfp) \quad \text{for all } \sfp \in \mfP_1,
\end{equation}
where $c_{\rmP\rmL} >0$ is the Polyak-\L ojasiewicz constant. 
Note that in our case $\inf_{p\in \mfP_1} \sfE_1(\sfp) =0$. 

Often, it is also useful to use a \emph{sublevel version of P\L{}}, namely 
\begin{equation}
    \label{eq:PolLoj.sublevel}
\begin{aligned}
&\forall \, E>0\ \exists\, c_{\rmP\rmL}(E)>0\ \forall\, \sfp\in\mfP_1\text{ with } 
  \sfE_1(\sfp)\leq E: \\  
  &\hspace{8em} \big\langle \rmD \sfE_1(\sfp), \bbK^\mathrm{red}_{\alpha,\beta}(\sfp) 
    \rmD \sfE_1(\sfp) \big\rangle \geq c_{\rmP\rmL}(E) \,\sfE_1(\sfp). 
\end{aligned}
\end{equation}
Here, we may assume that $E \mapsto c_{\rmP\rmL}(E) $ is non-decreasing. The long-time asymptotics can then be deduced by a simple application of Gr\"onwall's lemma to the energy-dissipation relation, giving the decay estimate 
\[
	\sfE_1(\sfp(0))\leq E \quad \Longrightarrow \quad \sfE_1(\sfp(t)) \leq \ee^{-{c_{\rmP\rmL}(E)} t} \sfE_1(\sfp(0)) \quad\text{for all $t>0$.}
\]

To apply the theory in the present case, we use the decomposition into the covariance and the mean part: 
\begin{align*}
	\sfE_1(\Sigma,m)= \sfH_\mafo{cov}(\Sigma) + \sfH_\rmm(m) \quad \text{with} \quad\left\{
	\begin{aligned}
	\sfH_\mafo{cov}(\Sigma) &= \frac12 \trace(\Gamma^{-1}\Sigma{-}I) -\frac12\log\big(\frac{\det\Sigma}{\det\Gamma}\big), \\
\sfH_\rmm(m) &= \frac12 ( m{-}n ) \cdot \Gamma^{-1} (m{-}n). 
	\end{aligned}\right.
\end{align*}
Correspondingly, the dissipation $\calD(\sfp) := \langle \rmD\sfE_1(\sfp), \bbK^\mathrm{red}_{\alpha,\beta}(\sfp)  \rmD \sfE_1(\sfp) \rangle$ has the splitting
\begin{align}
  &\calD(\sfp)=\calD_\mafo{cov}(\Sigma) + \calD_\rmm(\sfp) \quad 
  \\
  &\qquad\text{with}\quad\left\{
    \begin{aligned}
      \label{eq:Dissipation.split}
\calD_\mafo{cov}(\Sigma) &= \alpha \trace\big(( \Gamma^{-1}\Sigma{-}I)^2 
  \Sigma^{-1}\big) + \frac\beta2 \trace\big( ( \Gamma^{-1}\Sigma{-}I)^2\big) 
\\
\calD_\rmm(\sfp) &= \alpha | \Gamma^{-1} ( m {-} n)|^2 + \beta \, 
( m {-} n) \cdot \Gamma^{-1}\Sigma \Gamma^{-1} ( m {-} n).
\end{aligned}\right.
\end{align}

\begin{remark}\label{rem:H_cov_spd}
	The functional $\sfH_\mathrm{cov}$ may be expressed fully in terms of the eigenvalues of the symmetric positive definite matrix $ B=\Gamma^{-1/2} \Sigma \Gamma^{-1/2}\in\bbR^{d\times d}_{\spd}$. Indeed, denoting by $(b_i)_{i=1,...,d}$ its eigenvalues, we obtain
	\begin{align}\label{eq:H_cov_spd}
		\sfH_\mafo{cov}(\Sigma)= \frac12 \sum_{i=1}^d  \upphi(b_i) \qquad \text{with}\quad \upphi(b)= b-1-\log b = b\lambda_\rmB(1/b).
	\end{align}
	Notice that $\upphi\colon[0,+\infty)\to [0,+\infty]$ is a convex and lower semicontinuous function with
	\[
		0\le \upphi(y) \le \upphi'(y)(y-1) = (y^{1/2}-y^{-1/2})^2=:\upzeta_\upphi(y)\qquad\text{for all $y\in(0,+\infty)$}.
	\]
\end{remark}

Our first result provides a Polyak-\L ojasiewicz (P\L) estimate for the covariance functions only (i.e., it works for centered Gaussians). It is based on the function
\[
	h(\delta,\beta;J) := \inf_{y\in J} H(\delta,\beta; y) \quad \text{with}\quad 
H(\delta,\beta;y) := \begin{cases}
	(\delta + \beta y)\frac{\upzeta_\upphi(y)}{\upphi(y)} & \text{for }y\neq 1, \\
	2(\delta + \beta) &\text{for $y=1$},
\end{cases}
\]
for any open interval $J=(J_-,J_+)\subset(0,+\infty)$. For better intuition, we plot the auxiliary function $H$ in Figure~\ref{fig:h_aux_function}.
\begin{figure}[H]
  \centering
  \includegraphics[width=0.7\textwidth]{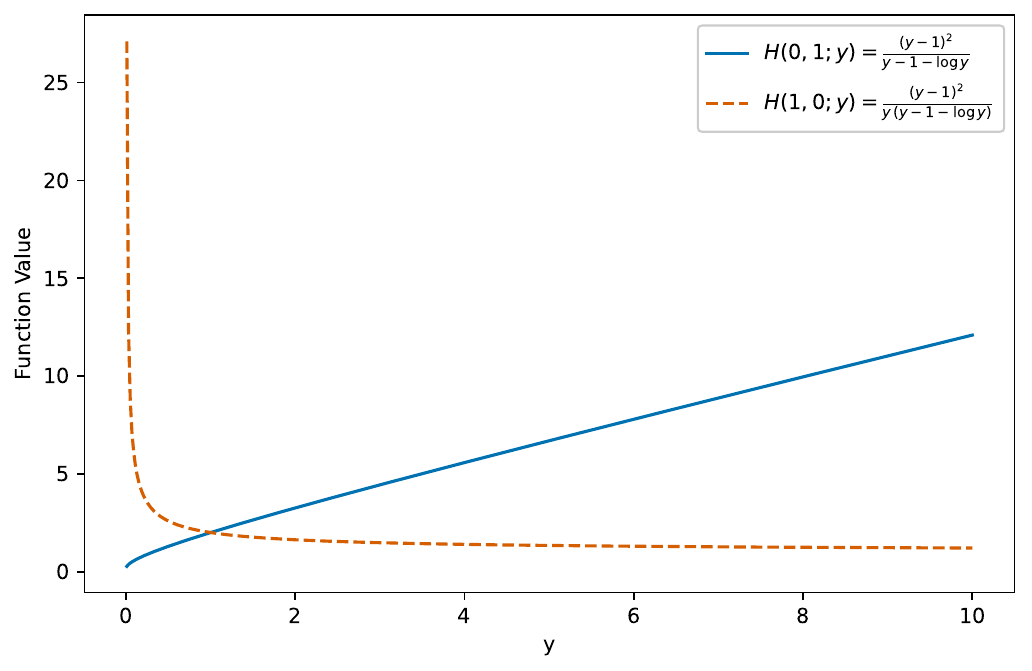}
  \caption{Plot of the auxiliary function $H(\delta,\beta;y)$ in Lemma~\ref{lem:PL-h}.}
  \label{fig:h_aux_function}
\end{figure}

\begin{lemma}
  \label{lem:PL-h}
	Let $J=(J_-,J_+)\subset(0,+\infty)$ with $ J_-<1< J_+ $, then
	\begin{gather*}
		h(1,0;J) = \lim_{y\to J_+}\frac{\upzeta_\upphi(y)}{\upphi(y)} 
         \in [1,2),\quad h(0,1;J) = \lim_{y\to J_-}\frac{y\upzeta_\upphi(y)}{\upphi(y)} \in [0,2 ) . 
	\end{gather*}
	In particular, we have $h(\delta,\beta;J) \ge \delta h(1,0;J) + 
    \beta h(0,1;J)$ for $\delta\beta>0$. 
	
	Moreover, if $J=(0,+\infty)$, then we have $h(1,0;J)=1$, $h(0,1;J)=0 $, 
    and \\
    $h(\delta,\beta; J) \geq  \min\{ \delta H(1,0;y_*), 
    \delta{+}\beta H(0,1;y_*)\}$ where $y_*>0$ is arbitrary.
\end{lemma}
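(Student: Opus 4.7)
The plan is to reduce the analysis of $H(\delta,\beta;\,\cdot\,)$ to two \emph{one-variable} auxiliary functions whose monotonicity is the crux of the matter. Using the algebraic identity $y\,\upzeta_\upphi(y) = y(y-2+y^{-1}) = (y-1)^2$, one rewrites
\[
  H(\delta,\beta;y) = \delta\,f_1(y) + \beta\,f_2(y), \qquad f_1(y):=\frac{\upzeta_\upphi(y)}{\upphi(y)}, \qquad f_2(y):=\frac{(y-1)^2}{\upphi(y)},
\]
for $y\neq 1$, with the continuous extensions $f_1(1)=f_2(1)=2$ arising from $\upzeta_\upphi(y)\sim(y-1)^2$ and $\upphi(y)\sim\tfrac12(y-1)^2$ near $y=1$. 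Since the infimum is super-additive with respect to sums, the universal inequality $\inf_J(\delta f_1+\beta f_2)\geq \delta\inf_J f_1+\beta\inf_J f_2$ will immediately yield the third assertion of the lemma for bounded $J$, so the remaining question is to control $f_1$ and $f_2$ individually.

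The main technical step, which I expect to be the main obstacle, is to prove that $f_1$ is \emph{strictly decreasing} and $f_2$ is \emph{strictly increasing} on $(0,+\infty)$. Direct differentiation (using $\upphi'(y)=(y-1)/y$ and $\upzeta_\upphi'(y)=(y^2-1)/y^2$) gives
\[
  y^2\,\upphi(y)^2\,f_1'(y) = (y-1)\,k(y), \qquad \upphi(y)^2\,f_2'(y) = (y-1)\,m(y),
\]
with the two auxiliaries $k(y)=2(y-1)-(y+1)\log y$ and $m(y)=y-2\log y-1/y$. Both vanish at $y=1$ and are signed by a two-derivative test: from $k'(1)=0$ and $k''(y)=(1-y)/y^2$ one sees that $k'$ attains its maximum $0$ at $y=1$, so $k$ is strictly decreasing and $(y-1)k(y)<0$ for $y\neq 1$; similarly, $m(1)=0$ together with $m'(y)=(y-1)^2/y^2\geq 0$ yields $(y-1)m(y)>0$ for $y\neq 1$. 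This establishes the desired strict monotonicities, and the asymptotics $\lim_{y\to 0^+}f_1(y)=+\infty$, $\lim_{y\to+\infty}f_1(y)=1$, $\lim_{y\to 0^+}f_2(y)=0$, $\lim_{y\to+\infty}f_2(y)=+\infty$ are then read off directly from the defining expressions.

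With these two monotonicities in hand, the first two statements for bounded $J=(J_-,J_+)\ni 1$ are immediate: $h(1,0;J)=\lim_{y\to J_+}f_1(y)\in[1,2)$ since $f_1$ strictly decreases from $2$ at $y=1$ down to its limit $1$ at $+\infty$, and symmetrically $h(0,1;J)=\lim_{y\to J_-}f_2(y)\in[0,2)$; the super-additivity bound then follows from the remark in paragraph one. Letting $J\nearrow(0,+\infty)$ yields $h(1,0;J)=1$ and $h(0,1;J)=0$. Finally, for the refined inequality on $J=(0,+\infty)$, I split the domain at an arbitrary $y_*>0$ into $(0,y_*]\cup[y_*,+\infty)$: monotonicity gives $f_1\geq f_1(y_*)=H(1,0;y_*)$ and $f_2\geq 0$ on $(0,y_*]$, so $\delta f_1+\beta f_2\geq \delta H(1,0;y_*)$ there, while on $[y_*,+\infty)$ one has $f_1\geq 1$ and $f_2\geq f_2(y_*)=H(0,1;y_*)$, giving $\delta f_1+\beta f_2\geq \delta+\beta H(0,1;y_*)$. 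Taking the infimum of the smaller of the two side-bounds produces the stated $\min\{\delta H(1,0;y_*),\,\delta+\beta H(0,1;y_*)\}$.
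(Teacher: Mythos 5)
Your proof is correct and follows essentially the same route as the paper: reduce everything to the monotonicity of $y\mapsto\upzeta_\upphi(y)/\upphi(y)$ (decreasing) and $y\mapsto y\,\upzeta_\upphi(y)/\upphi(y)=(y-1)^2/\upphi(y)$ (increasing), use super-additivity of the infimum for the middle claim, and split the domain at $y_*$ for the final bound. The only difference is that the paper merely asserts the two monotonicities (``it is not difficult to see''), whereas you verify them explicitly via the sign analysis of $k(y)=2(y-1)-(y+1)\log y$ and $m(y)=y-2\log y-1/y$; your computations there check out.
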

\begin{proof}
	From Remark~\ref{rem:H_cov_spd}, we have that $\upzeta_\upphi(y) \ge \upphi(y)$ 
    for all $y\in(0,+\infty)$. Moreover, it is not difficult to see that 
    $y\mapsto \upzeta_\upphi(y)/\upphi(y)$ is a decreasing function, which gives 
    the first expression. As for the second, we similarly check that 
    $y\mapsto y\upzeta_\upphi(y)/\upphi(y)$ is an increasing function with 
	\[
		\lim_{y\to 0} \frac{y\upzeta_\upphi(y)}{\upphi(y)}= 0,\qquad \lim_{y\to +\infty} \frac{y\upzeta_\upphi(y)}{\upphi(y)}=+\infty.
	\]
	The second last estimate follows straightforwardly from the fact that $\inf (f+g) \ge \inf f + \inf g$. 
    
    For the final estimate, we use the monotonicity of $y \mapsto H(1,0;y)$
    and $y \mapsto H(0,1;y)$ and obtain 
    \begin{align*}
    H(\delta,\beta;y) &=\delta H(1,0;y)+\beta H(0,1;y) \geq \delta H(1,0;y_*) +  
    \beta \,0 \quad \text{for } y \leq y_*,
    \\
     H(\delta,\beta;y) &=\delta H(1,0;y)+\beta H(0,1;y) \geq \delta\,1 +  
    \beta H(0,1;y_*) \quad \text{for } y \geq y_*.
    \end{align*}
    This gives the final estimate.
\end{proof}

Based on Lemma~\ref{lem:PL-h}, our next proposition shows that a global P\L{}-inequality holds if and only if $\alpha_\Gamma :=\alpha\nu_\mafo{min}(\Gamma^{-1})>0$. However, the sublevel P\L{}-inequality holds even when $\alpha_\Gamma=0$.

\begin{proposition}[P\L{}-estimates for the covariance]
\label{prop:PLSigma} Set $ \alpha_\Gamma := \alpha\nu_\mafo{min}(\Gamma^{-1})$, then for all $ \Sigma \in \R^{d\ti d}_\spd$ we have the P\L{}-estimate 
\[
    \calD_\mafo{cov}(\Sigma) \geq \mfc_\mafo{cov}(\alpha, \beta) \, 
    \sfH_\mafo{cov}(\Sigma)\quad\text{with}\quad \mfc_\mafo{cov}(\alpha, \beta) = 
    \begin{cases}
        h(2\alpha_\Gamma,\beta; (0, \infty)) & \text{for $\alpha_\Gamma\beta>0$}, \\
         2 \alpha_\Gamma& \text{for $\beta=0$}
         ,
         \\
         0 & 
         \text{for $\alpha_\Gamma=0$}.
    \end{cases}
\]
In particular, $\mfc_\mafo{cov}(\alpha, \beta) \in [ 2\alpha_\Gamma, 
4\alpha_\Gamma{+}2 \beta)$.
Furthermore, $\mfc_\mafo{cov}(\alpha, \beta) >0$ if and only if $\alpha_\Gamma>0$.

On the sublevel $\sfH_\mafo{cov}(\Sigma)\leq E < \infty $, the
P\L{} estimate above holds with
\[
	\mfc_\mafo{cov}(\alpha, \beta;E) = 2\alpha_\Gamma h(1,0;J_E) + \beta h(0,1;J_E),
\]
where $J_E = ( \ee^{-(1+2E)} , 1+\log 4 + 4E)$.
\end{proposition}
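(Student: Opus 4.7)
The plan is to reduce the matrix inequality $\calD_\mafo{cov}(\Sigma)\ge \mfc\,\sfH_\mafo{cov}(\Sigma)$ to a scalar inequality in the eigenvalues $(b_i)_{i=1,\dots,d}$ of $B := \Gamma^{-1/2}\Sigma\Gamma^{-1/2}\in\R^{d\ti d}_{\spd}$, and then invoke the auxiliary infimum bounds of Lemma~\ref{lem:PL-h}. By Remark~\ref{rem:H_cov_spd} we already have $\sfH_\mafo{cov}(\Sigma)=\frac12\sum_i \upphi(b_i)$, so the task is to produce a matching lower bound for $\calD_\mafo{cov}$.

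First, using cyclicity of the trace together with $\Gamma^{-1}\Sigma - I = \Gamma^{-1/2}(B-I)\Gamma^{1/2}$ and $\Sigma^{-1} = \Gamma^{-1/2}B^{-1}\Gamma^{-1/2}$, I would rewrite
\begin{align*}
\trace\big((\Gamma^{-1}\Sigma-I)^2\big) &= \trace\big((B-I)^2\big) = \sum_i b_i\,\upzeta_\upphi(b_i), \\
\trace\big((\Gamma^{-1}\Sigma-I)^2\Sigma^{-1}\big) &= \trace\big((B-I)^2 B^{-1}\Gamma^{-1}\big).
\end{align*}
The matrix $(B-I)^2 B^{-1}$ commutes with $B$ and is symmetric positive semidefinite with eigenvalues $\upzeta_\upphi(b_i)$, so the elementary inequality $\trace(CM)\ge \nu_\mafo{min}(C)\trace(M)$ for any spd $C$ and psd symmetric $M$ (applied with $C=\Gamma^{-1}$) yields
\[
\alpha\,\trace\big((\Gamma^{-1}\Sigma-I)^2\Sigma^{-1}\big)\ge \alpha_\Gamma \sum_i \upzeta_\upphi(b_i).
\]
Combining these two estimates produces the key eigenvalue-level bound
\[
\calD_\mafo{cov}(\Sigma)\ge \tfrac12\sum_{i=1}^d (2\alpha_\Gamma + \beta b_i)\,\upzeta_\upphi(b_i).
\]

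Next, I would divide by $\sfH_\mafo{cov}(\Sigma)=\frac12\sum_i\upphi(b_i)$: the right-hand side then becomes a $\upphi(b_i)$-weighted average of the values $H(2\alpha_\Gamma,\beta;b_i)$, hence bounded below by the infimum of $H$ over any open interval containing all $b_i$. This gives $\calD_\mafo{cov}(\Sigma)\ge h(2\alpha_\Gamma,\beta;J)\,\sfH_\mafo{cov}(\Sigma)$ for any admissible $J$. For the global statement, choosing $J=(0,+\infty)$, the three regimes fall out of Lemma~\ref{lem:PL-h}: for $\beta=0$ we get $h=2\alpha_\Gamma\,h(1,0;(0,\infty))=2\alpha_\Gamma$; for $\alpha_\Gamma=0$ we get $h=\beta\,h(0,1;(0,\infty))=0$; and for $\alpha_\Gamma\beta>0$ the constant is $h(2\alpha_\Gamma,\beta;(0,\infty))$. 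The range $\mfc_\mafo{cov}\in[2\alpha_\Gamma,\,4\alpha_\Gamma+2\beta)$ follows from the superadditivity estimate $h(\delta,\beta;J)\ge \delta h(1,0;J)+\beta h(0,1;J)$ (lower end) together with the point evaluation $H(2\alpha_\Gamma,\beta;1)=4\alpha_\Gamma+2\beta$ at the unique maximum of $\upzeta_\upphi/\upphi$ (strict upper end). For the sublevel statement, Lemma~\ref{lem:Sublevel} (whose proof confines each eigenvalue individually) guarantees $b_i \in J_E=(\ee^{-(1+2E)},\,1+\log 4+4E)$, and the same superadditivity bound applied on the bounded interval $J_E$ delivers the claimed formula $\mfc_\mafo{cov}(\alpha,\beta;E)=2\alpha_\Gamma h(1,0;J_E)+\beta h(0,1;J_E)$.

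The main technical obstacle is the Otto-part trace: since $\Sigma$ and $\Gamma$ do not commute in general, the matrices $(B-I)^2 B^{-1}$ and $\Gamma^{-1}$ are not simultaneously diagonalizable, and so a clean scalar reduction seems inaccessible. Replacing $\Gamma^{-1}$ by the lower bound $\nu_\mafo{min}(\Gamma^{-1})I$ is what decouples the two factors and produces the eigenvalue-level inequality; this step is also precisely what forces the scalar $\alpha_\Gamma=\alpha\,\nu_\mafo{min}(\Gamma^{-1})$ to govern the transport contribution to the decay rate rather than a finer spectral quantity of $\Gamma$.
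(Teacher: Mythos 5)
Your proposal is correct and follows essentially the same route as the paper: diagonalize via $B=\Gamma^{-1/2}\Sigma\Gamma^{-1/2}$, bound the Otto trace below by $\alpha_\Gamma\sum_i\upzeta_\upphi(b_i)$ using $\nu_\mafo{min}(\Gamma^{-1})$, reduce to the scalar bound $\calD_\mafo{cov}\ge\tfrac12\sum_i(2\alpha_\Gamma+\beta b_i)\upzeta_\upphi(b_i)$, and invoke Lemma~\ref{lem:PL-h} globally and on $J_E$ from Lemma~\ref{lem:Sublevel}. As a minor point in your favor, your identity $\trace\bigl((\Gamma^{-1}\Sigma-I)^2\Sigma^{-1}\bigr)=\trace\bigl((B-I)^2B^{-1}\Gamma^{-1}\bigr)$ is the correct one (the paper's displayed formula writes $\Gamma$ where $\Gamma^{-1}$ is needed for the subsequent $\alpha_\Gamma$ bound).
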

\begin{proof}
We recall the auxiliary symmetric positive definite matrix $ B=\Gamma^{-1/2} \Sigma \Gamma^{-1/2}\in\bbR^{d\times d}_{\spd}$ and denote by $(b_i)_{i=1,...,d}$ its eigenvalues.
Then,
\begin{align*}   
 & \calD_\mafo{cov}(\Sigma) = \alpha \trace\big((B{-}I)^2 B^{-1} \Gamma\big) + \frac\beta2 \trace\big((B{-}I)^2\big) \\
 &\qquad \geq \frac12\sum_{i=1}^d \big( \delta/b_i
    + \beta \big)(b_i{-}1)^2 = \frac12\sum_{i=1}^d \big( \delta
    + \beta b_i \big)\upzeta_\upphi(b_i),\qquad \delta := 2\alpha_\Gamma.
\end{align*}
For the inequality above, we used $\alpha \trace\big((B{-}I)^2 B^{-1} \Gamma\big) \geq \alpha_\Gamma \sum_{i=1}^d (b_i{-}1)^2/b_i.$
The last equality follows from the definition $\upzeta_\upphi(b_i) = 
(b_i^{1/2} - b_i^{-1/2})^2
= 
(b_i{-}1)^2/b_i$ for all $b_i\in(0,+\infty)$.

Recalling the properties of $h, H$ provided in Lemma~\ref{lem:PL-h}, we obtain:
\begin{align}
  \calD_\mafo{cov}(\Sigma) 
    \geq  
    \frac12\left(\inf_{b_i\in J} H(\delta, \beta; b_i) \right)\sum_{i=1}^d \upphi(b_i)
    =
    h(\delta, \beta; J) \sfH_\mafo{cov}(\Sigma)
    .
\label{eq:PLSigma.1}
\end{align}
Using
$h( \delta,\beta;(0,\infty)) 
\geq \min\{ \delta H(1,0;1), \delta{+}\beta H(0,1;1)\} 
\geq \delta$
and
$ h(\delta,\beta;(0,\infty) )$ $\leq H(\delta,\beta;1)
=  2 (\delta{+}\beta)$,
we obtain the desired statement in the first part of the proposition
after checking the cases of
$\alpha_\Gamma=0, \beta=0,
\text{ and }\alpha_\Gamma\beta>0$.

As for the sublevel P\L{}-inequality, 
we exploit the analysis from the proof of Lemma~\ref{lem:Sublevel}, and replace the matrix $\Sigma, \Gamma$ by $B, I$.
Then, we use the argument of the last two estimates from the proof of Lemma~\ref{lem:Sublevel}, obtaining
\[
	\sfH_\mafo{cov}(\Sigma) \le E\quad\Longrightarrow\quad b_j \in  [\ee^{-(1+2E)},1+\log 4 + 4E]=:J_E,\quad j=1,\ldots,d.
\]
Applying Lemma~\ref{lem:PL-h} then yields the last assertion.
\end{proof}

\begin{remark}
If $\alpha \Gamma^{-1}\geq \beta I $, the constant $\mfc_\mafo{cov}(\alpha,\beta)$
is comparable to the smallest eigenvalue of $2\alpha \Gamma^{-1}+\beta I$, which
corresponds to the optimal decay rate as given by the explicit solution 
$\Sigma(t)=A(t)^{-1}$ (cf.\ \eqref{eq:SigmaConverges} in Remark 
\ref{re:Explicit.A(t).decay}).
For very small $\alpha_\Gamma$, the 
P\L{}-coefficient does not reflect the decay of the long-time asymptotics anymore, because the Hellinger geometry dominates. Starting with a highly localized Gaussian (i.e.\ $\Sigma$ has very small eigenvalues) the Hellinger gradient flow 
allows the solution to stay localized and hence far away from some target Gaussian with covariance $\Gamma>0$. Hence, diffusion generated from the Otto-Wasserstein geometry is necessary for widening the Gaussian.

Indeed, the lower bound $ \mfc_\mafo{cov}(\alpha, \beta) \geq 
 2\alpha_\Gamma$ in Proposition \ref{prop:PLSigma} can be significantly 
 improved for $\alpha_\Gamma \ll \beta$. To see this, we make the choice $y_*= 2\alpha_\Gamma/\beta\ll 1$ in 
 the 
 last assertion of Lemma~\ref{lem:PL-h}.
 We find
 $h(\delta,\beta; J) \geq  \min\{ \delta H(1,0;y_*), 
 \delta{+}\beta H(0,1;y_*)\}
 \approx \beta / \log (2\alpha_\Gamma/\beta)$.
 Since we have $ y * \gg \log (1/y_*) $ for $y_* \ll 1$, we find the desired estimate
 $$
 \mfc_\mafo{cov}(\alpha, \beta)
 =h(\delta ,\beta; (0, \infty))
 \approx \beta / \log (2\alpha_\Gamma/\beta)
 \gg 2\alpha_\Gamma
 .
 $$
\end{remark}

The treatment of the functionals controlling the mean is simpler due to their quadratic nature. Note that we cannot take advantage of the Hellinger part for a global P\L{}-estimate because of the factor $\Sigma$ in the dissipation~\eqref{eq:Dissipation.split} term $\calD_\rmm(\sfp)$, which does not have a lower bound. However, as in the previous case, the P\L{}-coefficient improves when restricted to suitable sublevels of $\sfH_\mafo{cov}$.

\begin{proposition}[P\L{}-estimates for the mean]
\label{prop:PLMean}
The global P\L{}-estimate
\[
	\calD_\rmm(\Sigma,m) \geq \mfc_\rmm(\alpha,\beta) \sfH_\rmm(m)\quad\text{with}\quad \mfc_\mafo{m}(\alpha,\beta) = 2\alpha_\Gamma.
\]
If, in addition, $\sfH_\mafo{cov}(\Sigma)\le E$, then the P\L{}-estimate above holds with
\[
	\mfc_\mafo{m}(\alpha,\beta;E) = 2\alpha_\Gamma + 2\beta \ee^{-(1+2E)}.
\]
\end{proposition}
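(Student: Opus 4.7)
The strategy is to treat the two terms in the dissipation $\calD_\rmm(\Sigma,m) = \alpha |\Gamma^{-1}(m{-}n)|^2 + \beta (m{-}n)\cdot\Gamma^{-1}\Sigma\Gamma^{-1}(m{-}n)$ separately and compare each directly with the quadratic form $\sfH_\rmm(m) = \tfrac12 (m{-}n)\cdot\Gamma^{-1}(m{-}n)$. Setting $v := m{-}n$, the whole proof reduces to two elementary quadratic-form estimates.

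For the global estimate, the Otto-Wasserstein part is handled by writing
\[
\alpha\,|\Gamma^{-1}v|^2 = \alpha\, v\cdot\Gamma^{-1}\Gamma^{-1}v \geq \alpha\,\nu_\mafo{min}(\Gamma^{-1})\, v\cdot\Gamma^{-1}v = 2\alpha_\Gamma \sfH_\rmm(m),
\]
which uses only that $\Gamma^{-1}$ is symmetric positive definite. The Hellinger part $\beta\, v\cdot\Gamma^{-1}\Sigma\Gamma^{-1}v$ is non-negative since $\Sigma\in\R^{d\ti d}_\spd$, but it cannot be bounded below by a positive multiple of $\sfH_\rmm(m)$ uniformly in $\Sigma$: choosing $\Sigma$ with arbitrarily small smallest eigenvalue makes this term arbitrarily small while keeping $\sfH_\rmm(m)$ fixed. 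Hence discarding it yields the sharp global constant $\mfc_\rmm(\alpha,\beta)=2\alpha_\Gamma$.

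For the sublevel refinement, the additional input is a lower bound on $\Sigma$ derived from $\sfH_\mafo{cov}(\Sigma)\leq E$. I will use the auxiliary matrix $B=\Gamma^{-1/2}\Sigma\Gamma^{-1/2}$ already employed in the proof of Lemma~\ref{lem:Sublevel}: its eigenvalues $(b_j)$ satisfy $b_j{-}1{-}\log b_j \leq 2E$, which gives $b_j \geq \ee^{-(1+2E)}$, i.e., $B \geq \ee^{-(1+2E)} I$ in the PSD order. Setting $w=\Gamma^{-1/2}v$, I rewrite
\[
v\cdot\Gamma^{-1}\Sigma\Gamma^{-1}v = w\cdot B\,w \geq \ee^{-(1+2E)}|w|^2 = \ee^{-(1+2E)}\, v\cdot\Gamma^{-1}v = 2\ee^{-(1+2E)}\sfH_\rmm(m).
\]
Adding this to the global bound for the $\alpha$-part gives $\mfc_\rmm(\alpha,\beta;E) = 2\alpha_\Gamma + 2\beta\ee^{-(1+2E)}$, which is exactly the claimed constant.

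There is no genuine obstacle here; the only non-routine ingredient is the scalar inequality $b{-}1{-}\log b \leq 2E \Rightarrow b \geq \ee^{-(1+2E)}$, which was already established inside the proof of Lemma~\ref{lem:Sublevel} and can simply be invoked. The result matches the structure of Proposition~\ref{prop:PLSigma} in that the Hellinger dissipation contributes to the P\L-constant only after the covariance sublevel is fixed.
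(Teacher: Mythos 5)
Your proposal is correct and follows essentially the same route as the paper: the $\alpha$-term is bounded below by $2\alpha_\Gamma\,\sfH_\rmm(m)$ via the smallest eigenvalue of $\Gamma^{-1}$, and on the sublevel the $\beta$-term is controlled by the eigenvalue bound $b_j\ge \ee^{-(1+2E)}$ for $B=\Gamma^{-1/2}\Sigma\Gamma^{-1/2}$ taken from the proof of Lemma~\ref{lem:Sublevel}. Your write-up is in fact slightly cleaner than the paper's sketch (which contains a typo, writing $\sfH_\mafo{cov}(\Sigma)$ where $\sfH_\rmm(m)$ is meant), and your observation that the global constant cannot be improved by the Hellinger part matches the discussion preceding the proposition.
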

\begin{proof}
The proof is similar to that of Proposition~\ref{prop:PLSigma}:
The lower bound on the Hellinger part holds since the eigenvalues $(b_i)_{i=1,\ldots,d}$ of $B=\Gamma^{-1/2} \Sigma \Gamma^{-1/2}$ satisfy $b_i \geq \ee^{-(1+2E)}$. Hence, the second term in the expression of $\calD_\rmm(\sfp)$
in \eqref{eq:Dissipation.split} satisfies
\begin{align*}
  \beta (m{-}n) \cdot \Gamma^{-1}\Sigma \Gamma^{-1} ( m {-} n) \geq \beta \ee^{-(1+2E)} (m{-}n) \cdot \Gamma^{-1} ( m {-} n)
  = 2 \beta \ee^{-(1+2E)} \sfH_\mafo{cov}(\Sigma).
\end{align*}
The remaining part of the proof is straightforward and similar to that of Proposition~\ref{prop:PLSigma}, and is thus omitted.
\end{proof}

In summary, we obtain the following decay estimate based on the P\L{} estimates.

\begin{theorem}[Sublevel P\L{} for $\HK$-Boltzmann flow with Gaussian target]
\label{thm:PL.Decay}
Let\\
$\target=\sfG(\Gamma,n)$ be a Gaussian target measure. Then, the solution $t \mapsto \sfp(t)=(\Sigma(t),m(t))$ to the $\HK$-Boltzmann Gaussian gradient system $(\mfP_1,\sfE_1, \bbK^\mathrm{red}_{\alpha,\beta})$ with $\sfE_1(\sfp(0))\le E$ enjoys the decay estimate
\begin{align*} 
	\sfE_1(\sfp(t)) \leq \ee^{-\mfc_\mafo{cov}(\alpha,\beta;E)t} \sfH_\mafo{cov}(\Sigma(0)) + \ee^{-\mfc_\mafo{m}(\alpha,\beta;E) t} \sfH_\rmm(m(0)) \qquad\text{for all $t\ge0$},
\end{align*}
with the decay rates $\mfc_\mafo{cov}$ and $\mfc_\mafo{m}$ given in Propositions~\ref{prop:PLSigma} and \ref{prop:PLMean}, respectively.
\end{theorem}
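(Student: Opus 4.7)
The plan is to exploit two structural features of the reduced gradient system $(\mfP_1, \sfE_1, \bbK^\mathrm{red}_{\alpha,\beta})$: the additive decomposition $\sfE_1 = \sfH_\mafo{cov} + \sfH_\rmm$ and the block-diagonal form of $\bbK^\mathrm{red}_{\alpha,\beta}$ from Theorem \ref{thm:reduced-SHK}. From the block structure we read off $\calD(\sfp) = \calD_\mafo{cov}(\Sigma) + \calD_\rmm(\sfp)$, and because $\rmD_\Sigma\sfE_1(\sfp)$ depends only on $\Sigma$ while $\rmD_m\sfE_1(\sfp)$ depends only on $m$, the chain rule along the gradient flow yields the two separate identities
\[
\tfrac{\rmd}{\rmd t}\sfH_\mafo{cov}(\Sigma(t)) = -\calD_\mafo{cov}(\Sigma(t)), \qquad \tfrac{\rmd}{\rmd t}\sfH_\rmm(m(t)) = -\calD_\rmm(\sfp(t)).
\]
Crucially, the first equation of \eqref{eq:ODE66.mfP} shows that $\Sigma$ evolves autonomously, so the covariance identity is genuinely closed in $\Sigma$ alone.

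Next, I would observe that $\sfE_1(\sfp(0)) \le E$ together with the non-negativity of $\sfH_\mafo{cov}$ and $\sfH_\rmm$ forces both $\sfH_\mafo{cov}(\Sigma(0)) \le E$ and $\sfH_\rmm(m(0)) \le E$. Monotone decrease of $\sfH_\mafo{cov}$ along the autonomous $\Sigma$-flow then propagates $\sfH_\mafo{cov}(\Sigma(t)) \le E$ for all $t \ge 0$. This is exactly the hypothesis required to invoke the sublevel P\L{}-estimate from Proposition \ref{prop:PLSigma} at every time, and a direct application of Gr\"onwall's lemma to
\[
\tfrac{\rmd}{\rmd t}\sfH_\mafo{cov}(\Sigma(t)) = -\calD_\mafo{cov}(\Sigma(t)) \le -\mfc_\mafo{cov}(\alpha,\beta;E)\,\sfH_\mafo{cov}(\Sigma(t))
\]
produces the first exponential bound $\sfH_\mafo{cov}(\Sigma(t)) \le \ee^{-\mfc_\mafo{cov}(\alpha,\beta;E)\,t}\sfH_\mafo{cov}(\Sigma(0))$.

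For the mean the situation is subtler, since the evolution of $m$ is coupled to $\Sigma$ through the Hellinger term $\beta\Sigma\Gamma^{-1}(n{-}m)$; nonetheless, the sublevel bound $\sfH_\mafo{cov}(\Sigma(t)) \le E$ just established is exactly the hypothesis needed for Proposition \ref{prop:PLMean} in its sublevel form, which yields $\calD_\rmm(\sfp(t)) \ge \mfc_\rmm(\alpha,\beta;E)\,\sfH_\rmm(m(t))$ pointwise in $t$. A second Gr\"onwall argument then gives $\sfH_\rmm(m(t)) \le \ee^{-\mfc_\rmm(\alpha,\beta;E)\,t}\sfH_\rmm(m(0))$, and summing the two exponential estimates yields the claimed decay for $\sfE_1(\sfp(t))$. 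The only delicate step is precisely this propagation of the covariance sublevel bound, which is what decouples the two Gr\"onwall arguments and allows the mean estimate to use a time-independent sublevel constant; this step works cleanly because the autonomy of the $\Sigma$-equation means no information about $m$ is ever required to control $\sfH_\mafo{cov}(\Sigma(t))$.
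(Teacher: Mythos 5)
Your proposal is correct and follows exactly the route the paper intends: split $\sfE_1=\sfH_\mafo{cov}+\sfH_\rmm$, use the block-diagonal Onsager operator to split the dissipation, propagate the sublevel bound $\sfH_\mafo{cov}(\Sigma(t))\le E$ via monotonicity of the autonomous covariance part, and then apply the sublevel P\L{} estimates of Propositions~\ref{prop:PLSigma} and \ref{prop:PLMean} together with two Gr\"onwall arguments. The paper leaves this assembly implicit ("In summary, we obtain\dots"), and your write-up supplies precisely the missing details, in particular the key observation that $\sfE_1(\sfp(0))\le E$ and the monotone decay of $\sfH_\mafo{cov}$ justify using a time-independent sublevel constant in both estimates.
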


Based on the asymptotics of the explicit solution given in Remark~\ref{re:Explicit.A(t).decay}, one expects that the decay rate in the case $\alpha_\Gamma=0$ to be $\beta>0$ and independent of the initial data. Clearly, the rates we obtained above is slower than $\beta$, especially when $E\gg  0$. In the next section, we see how the expected decay rate can be obtained by a further analysis on the evolution of the eigenvalues of $\Gamma^{-1/2}\Sigma\Gamma^{-1/2}$.

\subsection{Refined decay estimate for Gaussian target measures}

As in the previous section, we begin our exposition by studying the long-time asymptotics for the covariance $\Sigma$. We recall that
\[
	\frac{\rmd }{\rmd t} \sfH_\mafo{cov}(\Sigma) = -\calD_\mafo{cov}(\Sigma) = -\frac12\sum_{i=1}^d \big( 2\alpha_\Gamma
    + \beta b_i \big)\upzeta_\upphi(b_i),
\]
where $(b_i)_{i=1,\ldots,d}$ are the eigenvalues of the matrix $B=\Gamma^{-1/2}\Sigma \Gamma^{-1/2}\in\R^{d\ti d}_{\spd}$. Therefore, one way of improving the decay rate is to understand the evolution of the eigenvalues $b_i$. 

From the evolution for $\Sigma$ (cf.\ \eqref{eq:ODE66.mfP}), we deduce the evolution for $B$:
\[
	\frac{\rmd}{\rmd t}B = -\alpha \bigl(\Gamma^{-1}B + B\Gamma^{-1} - 2\Gamma^{-1}\bigr) + \beta\bigl(B - B^2\bigr),\qquad B(0)=\Gamma^{-1/2}\Sigma(0)\Gamma^{-1/2}.
\]
Using the Hellmann-Feynman theorem \cite{hellmann2015hans}, we then obtain an equation for the eigenpairs $(b_i,v_i)_{i=1,\ldots,d}$ of $B$, given by
\begin{align}\label{eq:gaussian_eigen}
	\dot b_i = -\bigl( 2\alpha\langle v_i | \Gamma^{-1}v_i\rangle + \beta b_i\bigr)(b_i-1),\qquad i=1,\ldots,d.
\end{align}

\begin{lemma}\label{lem:gaussian_eigen_bound}
	The evolution of eigenvalues $(b_i)_{i=1,\ldots,d}$ for $B$ satisfy the estimates
	\begin{align}\label{eq:gaussian_eigen_bound}
		1\wedge \mfb_i^{-1}(t) \le b_i(t) \le 1\vee \mfb_i(t),
    \qquad \mfb_i(t):= 1 + (b_i(0)-1)\,\ee^{-\upnu_\mafo{cov}(\alpha,\beta) t},
	\end{align}
	with $\upnu_\mafo{cov}(\alpha,\beta):=2\alpha \nu_\mafo{min}(\Gamma^{-1}) + \beta$.
\end{lemma}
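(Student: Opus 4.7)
The key observation is that $b_i = 1$ is the unique equilibrium of the scalar ODE~\eqref{eq:gaussian_eigen}, and since $2\alpha\langle v_i | \Gamma^{-1} v_i\rangle + \beta b_i > 0$, the sign of $\dot b_i$ is opposite to that of $b_i - 1$. Consequently $b_i(t)$ moves monotonically toward $1$, and each of the sets $\{b_i > 1\}$ and $\{b_i < 1\}$ is forward invariant under the flow, which splits the analysis into two cases (the case $b_i(0)=1$ is trivial). In each case the plan is to bound the distance to equilibrium by a scalar exponential envelope via Gr\"onwall's lemma.

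For the upper envelope, suppose $b_i(0) > 1$ and set $w := b_i - 1 > 0$. Equation~\eqref{eq:gaussian_eigen} reads $\dot w = -(2\alpha \langle v_i|\Gamma^{-1}v_i\rangle + \beta b_i)\,w$, where the bracket is bounded below by $2\alpha \nu_\mafo{min}(\Gamma^{-1}) + \beta = \upnu_\mafo{cov}(\alpha,\beta)$ because $\langle v_i|\Gamma^{-1}v_i\rangle \geq \nu_\mafo{min}(\Gamma^{-1})$ and $b_i \geq 1$ throughout the flow. Gr\"onwall's lemma then gives $w(t) \leq w(0)\ee^{-\upnu_\mafo{cov} t}$, i.e., $b_i(t) \leq \mfb_i(t)$. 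In the complementary case $b_i(0) \leq 1$, monotonicity gives the trivial upper bound $b_i(t) \leq 1$, and combining both cases yields $b_i(t) \leq 1 \vee \mfb_i(t)$.

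The lower envelope requires a change of variables, and this is the main obstacle: when $b_i < 1$ the bracket in~\eqref{eq:gaussian_eigen} is no longer bounded below by $\upnu_\mafo{cov}$, since the Hellinger contribution $\beta b_i$ can be arbitrarily small, so a direct Gr\"onwall estimate on $b_i - 1$ fails to yield the desired rate. To sidestep this, pass to the reciprocal $c_i := 1/b_i$. Using $\dot c_i = -b_i^{-2}\dot b_i$ together with $c_i b_i = 1$, a direct computation gives
\[
\dot c_i = -\bigl(2\alpha c_i\langle v_i | \Gamma^{-1}v_i\rangle + \beta\bigr)(c_i - 1).
\]
For $c_i \geq 1$ (equivalently $b_i \leq 1$), the new bracket is bounded below by $2\alpha\nu_\mafo{min}(\Gamma^{-1}) + \beta = \upnu_\mafo{cov}$; note that here the Hellinger contribution appears without the factor $b_i$, which is exactly what restores the full rate. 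Applying the previous Gr\"onwall argument to $c_i - 1$ in the case $b_i(0) < 1$ yields $c_i(t) \leq 1 + (c_i(0)-1)\ee^{-\upnu_\mafo{cov} t}$, and reciprocating gives $b_i(t) \geq 1/c_i(t)$. Interpreting $\mfb_i^{-1}(t)$ as the reciprocal of this envelope (and noting that monotonicity supplies the trivial lower bound $b_i(t) \geq 1$ when $b_i(0) \geq 1$), the combined estimate reads $b_i(t) \geq 1 \wedge \mfb_i^{-1}(t)$, which completes~\eqref{eq:gaussian_eigen_bound}.
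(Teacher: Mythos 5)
Your proof is correct and follows essentially the same route as the paper's: both split into the invariant regions $\{b_i\ge 1\}$ and $\{b_i<1\}$, apply Gr\"onwall to $b_i-1$ with the bound $2\alpha\langle v_i|\Gamma^{-1}v_i\rangle+\beta b_i\ge\upnu_\mafo{cov}$ for the upper envelope, and pass to the reciprocal $b_i^{-1}-1$ (whose equation carries the bracket $2\alpha b_i^{-1}\langle v_i|\Gamma^{-1}v_i\rangle+\beta\ge\upnu_\mafo{cov}$) for the lower envelope. Your parenthetical reading of $\mfb_i^{-1}(t)$ as the reciprocal of the envelope built from the initial value $b_i(0)^{-1}$ is exactly the interpretation the paper's own proof uses.
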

\begin{proof}
	Notice that $b=1$ is a stationary state for \eqref{eq:gaussian_eigen}. Splitting the initial values into two sets $A^+:= \{i:b_i(0)\ge 1\}$ and $A^-:=\{i:0< b_i(0)< 1\}$, we easily deduce that the evolution leaves these sets invariant. Hence, for $i\in A^+$, we obtain
\begin{align*}
	\frac{\rmd}{\rmd t}(b_i-1) &= -\bigl( 2\alpha\langle v_i|\Gamma^{-1}v_i\rangle + \beta b_i\bigr)(b_i-1) \le -\upnu_\mafo{cov}(\alpha,\beta)(b_i-1),
\end{align*}
and consequently
\[
	b_i(t) \le 1 + (b_i(0)-1)\,\ee^{-\upnu_\mafo{cov}(\alpha,\beta) t} = \mfb_i(t)\qquad \text{for all $i\in A^+$.}
\]
Similarly, one obtains for $i\in A^-$,
\[
	\frac{\rmd}{\rmd t}(b_i^{-1}-1) = -\bigl( 2\alpha\langle v_i|\Gamma^{-1}v_i\rangle b_i^{-1} + \beta \bigr)(b_i^{-1}-1) \le -\upnu_\mafo{cov}(\alpha,\beta)(b_i^{-1}-1),
\]
and therefore,
\[
	b_i(t) \ge \frac{1}{1+ (b_i^{-1}(0)-1)\,\ee^{-\upnu_\mafo{cov}(\alpha,\beta) t}} = \mfb_i^{-1}(t)\qquad \text{for all $i\in A^-$,}
\]
thus concluding the proof.
\end{proof}

Recalling from Remark~\ref{rem:H_cov_spd} that $\upphi(b)\le \upzeta_\upphi(b)$ for all $b>0$, we have that 
\[
	\frac{\rmd }{\rmd t} \sfH_\mafo{cov}(\Sigma) = -\calD_\mafo{cov}(\Sigma) = -\big( 2\alpha_\Gamma
    + \beta b_\mafo{min} \big)\sfH_\mafo{cov}(\Sigma),
\]
and from which we obtain
\[
	\sfH_\mafo{cov}(\Sigma(t)) \le \ee^{-\upnu_\mafo{cov}(\alpha,\beta)t}\,\sfH_\mafo{cov}(\Sigma(0)) \cdot\begin{cases}
		1 & \text{if $b_\mafo{min}(0)\ge 1$,}\\
		\displaystyle\exp\left(
    \beta \int_0^t (1-b_\mafo{min}(s))\dd s\right) & \text{if $b_\mafo{min}(0)< 1$,}
	\end{cases}
\]
where $b_\mafo{min}$ is the minimal eigenvalue of $B$. The integral term on the right-hand side may be further estimated using Lemma~\ref{lem:gaussian_eigen_bound},
\begin{align*}
  1-b_\mafo{min}(s) \le 1 - \mfb_i^{-1}(s)
  =
  \frac{(b_i^{-1}(0)-1)\,\ee^{-\upnu_\mafo{cov}(\alpha,\beta) t}}{1+ (b_i^{-1}(0)-1)\,\ee^{-\upnu_\mafo{cov}(\alpha,\beta) t}}
\end{align*}
Let:
$$
G(t) : =  -\frac{1}{\upnu_\mafo{cov}(\alpha,\beta)} \log \Bigl( 1 + (b_\mafo{min}(0)-1)\ee^{-\upnu_\mafo{cov}(\alpha,\beta) t}\Bigr),
$$
we can verify that 
$$
\int_0^t (1-b_\mafo{min}(s))\dd s \le
\int_0^t (1 - \mfb_i^{-1}(s)) \dd s
=
G(t) - G(0)
.
$$
Thus, we obtain
\[
	\int_0^t (1-b_\mafo{min}(s))\dd s \le
   -\frac{1}{\upnu_\mafo{cov}(\alpha,\beta)} \log \Bigl( b_\mafo{min}(0) + (1-b_\mafo{min}(0))\ee^{-\upnu_\mafo{cov}(\alpha,\beta) t}\Bigr)
   .
\]
Since $(1-b_\mafo{min}(0))\ee^{-\upnu_\mafo{cov}(\alpha,\beta) t}>0$, 
the above estimate yields
\[
	\exp\left(\beta \int_0^t (1-b_\mafo{min}(s))\dd s\right) 
	\le \bigl( b_\mafo{min}(0)\bigr)^{- \frac{\beta}{\upnu_\mafo{cov}(\alpha,\beta)}}.
\]
From these estimates, we conclude that the solution $t \mapsto \Sigma(t)$ to the covariance equation \eqref{eq:ODE66.mfP} enjoys the decay estimate
\begin{align}\label{eq:decay_cov}
	\sfH_\mafo{cov}(\Sigma(t)) \le \bigl(\min\{1,b_\mafo{min}(0)\}\bigr)^{-\frac{\beta}{\upnu_\mafo{cov}(\alpha,\beta)}} \ee^{- \upnu_\mafo{cov}(\alpha,\beta)t}\,\sfH_\mafo{cov}(\Sigma(0)),
\end{align}
with the decay rate $\upnu_\mafo{cov}(\alpha,\beta)=2\alpha \nu_\mafo{min}(\Gamma^{-1}) + \beta$.

\medskip
The decay of the mean can be easily established from the differential inequality
\begin{align*}
	\frac{\rmd}{\rmd t}\sfH_\mafo{m}(m) = - \calD_\mafo{m}(\Sigma,m) 
\le -2\bigl(\alpha\nu_\mafo{min}(\Gamma^{-1}) + \beta b_\mafo{min}\bigr)\sfH_\mafo{m}(m).
\end{align*}
Repeating the approach used for the decay of the covariance, we find the decay estimate
\begin{align}\label{eq:decay_m}
	\sfH_\mafo{m}(m(t)) \le \bigl(\min\{1,b_\mafo{min}(0)\}\bigr)^{-\frac{2\beta}{\upnu_\mafo{m}(\alpha,\beta)}} \ee^{- \upnu_\mafo{m}(\alpha,\beta)t}\,\sfH_\mafo{m}(m(0)),
\end{align}
with the decay rate $\upnu_\mafo{m}(\alpha,\beta)=2(\alpha \nu_\mafo{min}(\Gamma^{-1}) + \beta)$.

\medskip
We summarize our discussion in this section with the following statement.

\begin{theorem}[Refined estimate: $\HK$-Boltzmann with Gaussian target]
\label{thm:Decay}
Let $\target=\sfG(\Gamma,n)$ be a Gaussian target measure. Then the solution $t \mapsto \sfp(t)=(\Sigma(t),m(t))$ to the  Gaussian gradient system $(\mfP_1,\sfE_1, \bbK^\mathrm{red}_{\alpha,\beta})$ enjoys the decay estimate
\begin{align*} 
	\sfE_1(\sfp(t)) \leq \ee^{-\upnu_\mafo{cov}(\alpha,\beta)t} \sfH_\mafo{cov}(\Sigma(0)) + \ee^{-\upnu_\mafo{m}(\alpha,\beta) t} \sfH_\rmm(m(0)) \qquad\text{for all $t\ge0$},
\end{align*}
with the decay rates $\upnu_\mafo{cov}$ and $\upnu_\mafo{m}$ given as in \eqref{eq:decay_cov} and \eqref{eq:decay_m}, respectively.
\end{theorem}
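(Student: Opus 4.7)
The plan is to exploit the additive decomposition $\sfE_1(\sfp) = \sfH_\mafo{cov}(\Sigma) + \sfH_\rmm(m)$ and to handle the covariance and mean parts separately, since the decoupled evolution \eqref{eq:ODE66.mfP} in fact drives each piece down independently. Adding the two estimates \eqref{eq:decay_cov} and \eqref{eq:decay_m} at the end then yields the claim.

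For the covariance part I would diagonalize via $B=\Gamma^{-1/2}\Sigma\Gamma^{-1/2}$. Since $\sfH_\mafo{cov}(\Sigma)=\tfrac12\sum_i\upphi(b_i)$ and $\calD_\mafo{cov}(\Sigma)=\tfrac12\sum_i(2\alpha_\Gamma+\beta b_i)\upzeta_\upphi(b_i)$ with $\upphi\le\upzeta_\upphi$, I obtain the termwise differential inequality
\[
\frac{\dd}{\dd t}\sfH_\mafo{cov}(\Sigma)\le -\bigl(2\alpha_\Gamma+\beta\,b_\mafo{min}(t)\bigr)\sfH_\mafo{cov}(\Sigma).
\]
The Gronwall integration $\exp\bigl(-\int_0^t(2\alpha_\Gamma+\beta b_\mafo{min}(s))\dd s\bigr)$ gives a decay factor $\ee^{-2\alpha_\Gamma t}\exp\bigl(-\beta\int_0^t b_\mafo{min}(s)\dd s\bigr)$, which I would split as $\ee^{-\upnu_\mafo{cov}t}\exp\bigl(\beta\int_0^t(1-b_\mafo{min}(s))\dd s\bigr)$ with $\upnu_\mafo{cov}=2\alpha_\Gamma+\beta$. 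Controlling the remaining integral is where the work lies.

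The key tool is the eigenvalue ODE obtained by the Hellmann--Feynman theorem, namely $\dot b_i=-\bigl(2\alpha\langle v_i|\Gamma^{-1}v_i\rangle+\beta b_i\bigr)(b_i-1)$. Since $b=1$ is stationary, the sets $\{b_i(0)\ge1\}$ and $\{b_i(0)<1\}$ are invariant, and Lemma~\ref{lem:gaussian_eigen_bound} yields the sandwich $1\wedge\mfb_i^{-1}(t)\le b_i(t)\le 1\vee\mfb_i(t)$ with $\mfb_i(t)=1+(b_i(0){-}1)\ee^{-\upnu_\mafo{cov}t}$. If $b_\mafo{min}(0)\ge1$ the excess integral is $\le 0$ and the pure exponential $\ee^{-\upnu_\mafo{cov}t}$ suffices. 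Otherwise, bounding $1-b_\mafo{min}(s)\le 1-\mfb_{i_*}^{-1}(s)$ for the minimizing index and recognizing the integrand as the derivative of $-\upnu_\mafo{cov}^{-1}\log\bigl(1+(b_\mafo{min}^{-1}(0){-}1)\ee^{-\upnu_\mafo{cov}s}\bigr)$ allows an exact evaluation. I expect this to collapse, after simplification, to the factor $\bigl(\min\{1,b_\mafo{min}(0)\}\bigr)^{-\beta/\upnu_\mafo{cov}}$ appearing in \eqref{eq:decay_cov}. This interplay between the Gronwall exponential and the explicit eigenvalue trajectory is the main technical obstacle and requires careful bookkeeping.

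For the mean part the situation is strictly simpler: since $\sfH_\rmm$ is quadratic in $m-n$ and $\calD_\rmm(\Sigma,m)=\alpha|\Gamma^{-1}(m{-}n)|^2+\beta(m{-}n)\cdot\Gamma^{-1}\Sigma\Gamma^{-1}(m{-}n)$, the estimate $\Gamma^{-1}\Sigma\Gamma^{-1}\ge b_\mafo{min}\Gamma^{-1}$ yields
\[
\frac{\dd}{\dd t}\sfH_\rmm(m)\le -2\bigl(\alpha_\Gamma+\beta\,b_\mafo{min}(t)\bigr)\sfH_\rmm(m),
\]
and the identical Gronwall/eigenvalue-bound argument produces the rate $\upnu_\rmm=2(\alpha_\Gamma+\beta)$ and prefactor $\bigl(\min\{1,b_\mafo{min}(0)\}\bigr)^{-2\beta/\upnu_\rmm}$ of \eqref{eq:decay_m}. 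Summing the two decay estimates gives the stated bound on $\sfE_1(\sfp(t))$ and completes the proof.
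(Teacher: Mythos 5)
Your proposal follows essentially the same route as the paper: the additive split of $\sfE_1$ into $\sfH_\mafo{cov}$ and $\sfH_\rmm$, the differential inequalities driven by $\upphi\le\upzeta_\upphi$ and $\Gamma^{-1}\Sigma\Gamma^{-1}\ge b_\mafo{min}\Gamma^{-1}$, the Gr\"onwall factor split off as $\ee^{-\upnu_\mafo{cov}t}\exp\bigl(\beta\int_0^t(1-b_\mafo{min}(s))\dd s\bigr)$, and the explicit integration of $1-\mfb_i^{-1}(s)$ via Lemma~\ref{lem:gaussian_eigen_bound} to produce the prefactor $\bigl(\min\{1,b_\mafo{min}(0)\}\bigr)^{-\beta/\upnu_\mafo{cov}}$. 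This is exactly the paper's argument, so the proof is correct as outlined.
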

The refined estimate from Theorem~\ref{thm:Decay} is sharper than the one from Theorem~\ref{thm:PL.Decay}
as it no longer depends on the initial condition $E$ for the sublevels.

\subsection{Generalization to strongly log-\texorpdfstring{$\uplambda$}{lambda}-concave target measures}

We now consider a sufficiently regular log-$\uplambda$-concave target measure $\target=\ee^{-V}\rmd x\in\calP(\R^d)$, i.e.,
there exists $\uplambda >0$ such that $V:\R^d\to \R$ is $\uplambda$-convex:
	\begin{align}\label{eq:V_lambda_convex}
		x\mapsto V(x) - \frac{\uplambda}{2}|x|^2 \quad\text{is convex}.
	\end{align}
We recall that for $\sfp=(\Sigma,m)\in \mfP_1$,
\begin{align*}
	\sfE_1(\sfp) = \calH_\rmB(\sfG(\sfp)|\target)
	= -\frac{1}{2}\log \det \Sigma - \frac{d}{2}\log 2\target - \frac{d}{2} + \int_{\R^d} V\sfG(\sfp)\dd x.
\end{align*}

\begin{remark}\label{rem:W2_lambda_convexity}
Since $\target$ is log-$\uplambda$-concave, the driving energy $\calE=\calH_\rmB(\cdot|\target)$ is geodesically $\uplambda$-convex w.r.t.\ the $\bbW_2$-distance \cite{AmGiSa05GFMS}, i.e., for any $\bbW_2$-geodesic $[0,1]\ni \tau\mapsto \rho_\tau\in\calP(\R^d)$,
\[
	\calE(\rho_\tau) \le (1{-}\tau)\calE(\rho_0) + \tau \calE(\rho_1) - \frac{\uplambda}{2}(1{-}\tau)\tau  \bbW_2^2(\rho_0,\rho_1) ,\quad \tau\in[0,1].
\]
In particular, it is $\uplambda$-convex w.r.t.\ the $\bbW_2$-distance restricted to Gaussian measures (also known as the Bures-Wasserstein distance \cite{lambertVariationalInferenceWasserstein2022}), which is equivalent to the distance induced by $\bbK_\Ot^\text{red}$ (cf.\ Remark ). In this case, we find
\begin{align}\label{eq:energy-slope}
	\sfE_1(\sfp) - \inf_{\sfq\in\mfP_1} \sfE_1(\sfq) \le \frac{1}{2\uplambda}|\partial_\Ot\sfE_1|^2(\sfp),
\end{align}
where $|\partial_\Ot\sfE_1|$ is the metric slope of $\sfE_1$ w.r.t.\ the Bures-Wasserstein distance.
\end{remark}

The existence of unique minimizers for $\sfE_1$ on $\mfP_1$ is known. Nevertheless, we provide a direct proof of this while also providing insights into the sublevels of $\sfE_1$.

\begin{proposition}\label{prop:sublevel-minimizer}
	Assume condition \eqref{eq:V_lambda_convex}. Then for any $E>0$, there exists an $r_E>0$ such that for every $\sfp=(\Sigma,m)$ with $\sfE_1(\sfp)\le E$,
	\[
		r_E^{-1}\le \sigma_i \le r_E\quad \text{for all $i=1,\ldots,d$},\qquad |m|\le r_E,
	\]
	with $(\sigma_i)_{i=1,\ldots,d}$ being the eigenvalues of $\Sigma$. 
	
	In particular, the functional $\sfE_1\colon \mfP_1\to [0,+\infty] $ admits a unique minimizer, which satisfies the Euler-Lagrange equations
	\[
		\Sigma = \int \nabla_x^2 V\,\sfG(\Sigma,m)\dd x,\qquad 0=\int \nabla_xV \,\sfG(\Sigma,m)\dd x.
	\]
\end{proposition}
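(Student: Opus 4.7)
My approach proceeds in three stages: (i) derive a coercive lower bound for $\sfE_1$ that splits into a sum of functions of the individual eigenvalues of $\Sigma$ plus a coercive quadratic in $m$, giving the eigenvalue and mean bounds on sublevels; (ii) obtain a minimizer by the direct method on a compact sublevel; (iii) use geodesic $\uplambda$-convexity along Bures-Wasserstein geodesics (Remark~\ref{rem:W2_lambda_convexity}) for uniqueness, and the differentials from Remark~\ref{rem:psi-derivative} to read off the Euler-Lagrange equations.

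For step (i), since $W(x):=V(x) - \tfrac{\uplambda}{2}|x|^2$ is convex and finite-valued, it admits a subgradient $p\in\partial W(0)$, yielding the pointwise bound $V(x)\ge V(0)+p\cdot x + \tfrac{\uplambda}{2}|x|^2$. Integrating against $\sfG(\Sigma,m)$ and using the elementary identity $\int |x|^2\,\sfG(\Sigma,m)\dd x = |m|^2 + \trace\Sigma$, and diagonalizing $\Sigma$ with eigenvalues $(\sigma_i)_{i=1,\ldots,d}$, we obtain
\begin{align*}
\sfE_1(\sfp)\ \ge\ \sum_{i=1}^{d}\Bigl(\tfrac{\uplambda}{2}\sigma_i - \tfrac{1}{2}\log\sigma_i\Bigr) + \tfrac{\uplambda}{2}|m|^2 + p\cdot m + C,
\end{align*}
where $C = V(0) - \tfrac{d}{2}\log(2\pi) - \tfrac{d}{2}$. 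Each summand $\sigma\mapsto \tfrac{\uplambda}{2}\sigma - \tfrac{1}{2}\log\sigma$ is coercive on $(0,\infty)$ (blowing up both as $\sigma\to 0$ and as $\sigma\to\infty$), and $m\mapsto \tfrac{\uplambda}{2}|m|^2 + p\cdot m$ is coercive in $|m|$. Hence $\sfE_1(\sfp)\le E$ forces $\sigma_i\in[r_E^{-1},r_E]$ and $|m|\le r_E$ for some $r_E$ depending only on $E,\uplambda,V(0),|p|,d$. This is exactly the first claim.

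For step (ii), the resulting sublevel lies in the compact subset $\{(\Sigma,m)\colon r_E^{-1}I\preceq \Sigma\preceq r_E I,\ |m|\le r_E\}\subset\mfP_1$, and $\sfE_1$ is continuous there (by dominated convergence, using local integrability of $V$ against Gaussians with bounded covariance), so a minimizer exists. For step (iii), Remark~\ref{rem:W2_lambda_convexity} asserts $\uplambda$-convexity of $\sfE_1$ along Bures-Wasserstein geodesics; with $\uplambda>0$ this is strict, so the minimizer is unique. The Euler-Lagrange equations follow by setting $\rmD\sfE_1(\sfp^*)=0$ using the formulas of Remark~\ref{rem:psi-derivative} specialized to $\target=\ee^{-V}\dd x$, i.e.\ with $\nabla_x\log\target = -\nabla V$ and $\nabla_x^2\log\target = -\nabla^2 V$.

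The main obstacle is the bookkeeping in step (i): making sure the chosen subgradient $p\in\partial W(0)$ exists (which is automatic for finite convex $V$ on $\R^d$) and that all integrals appearing are finite for every $\sfp\in\mfP_1$, for which a mild growth assumption on $V$ embedded in ``sufficiently regular'' suffices. Once the sublevel bound is in place, the remaining steps (direct method, strict Bures-Wasserstein convexity, and the critical-point computation) are standard.
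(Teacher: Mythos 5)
Your proposal is correct and follows essentially the same route as the paper: a coercive lower bound on $\sfE_1$ obtained from the $\uplambda$-convexity of $V$ (the paper absorbs the linear term via Young's inequality into $\tfrac{\uplambda}{4}|x|^2$ where you keep $p\cdot m$ explicit — an immaterial difference), reduction to a sum of coercive scalar functions of the eigenvalues of $\Sigma$, the direct method for existence, uniqueness via strict $\uplambda$-convexity along Bures--Wasserstein geodesics (which preserve Gaussianity), and the Euler--Lagrange equations read off from the differentials of Section~\ref{se:ExtenNonGauss}.
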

\begin{proof}
	Clearly $\sfE_1$ is continuous on $\mfP_1$. We now prove the coercivity of $\sfE_1$. 
	
	Consider the sublevel set $L_E :=\{\sfp\in\mfP_1 : \sfE_1(\sfp)\le E\}$. Since $V$ is $\uplambda$-convex,
	\[
		V(x) - \frac{\uplambda}{2}|x|^2 - V(\ol x) \ge \nabla V(\ol x)\cdot (x-\ol x) \ge -|\nabla V(\ol x)||\ol x|
		-\frac{|\nabla V(\ol x)|^2}{\uplambda} - \frac{\uplambda}{4}|x|^2,
	\]
	there exists a constant $c_E>0$ such that
	\begin{align*}
	\sfE_1(\sfp) &\ge -\frac{1}{2}\log \det \Sigma + \frac{\uplambda}{4}\int_{\R^d} |x|^2\,\sfG(\sfp)\dd x - c_E \\
	&= -\frac{1}{2}\log \det \Sigma + \frac{\uplambda}{4}\bigl(\trace[\Sigma] + |m|^2\bigr) - c_E \\
	&= \frac{1}{2}\sum_{i=1}^d \Bigl(\log\sigma_i^{-1} + \frac{\uplambda}{2}\sigma_i\Bigr) + \frac{\uplambda}{4}|m|^2 - c_E =: \frac{1}{2}\sum_{i=1}^d \uptheta(\sigma_i) + \frac{\uplambda}{4}|m|^2 - c_E,
	\end{align*}
	with $(\sigma_i)_{i=1,\ldots,d}$ being the eigenvalues of $\Sigma$. It is easy to check that $\uptheta(r)=\log r^{-1} + \uplambda r/2$ is convex and has the properties (1) $\uptheta(r)\ge 1 - \log 2 +\log\uplambda$, (2) $\lim_{r\to 0} \uptheta(r) = +\infty$ and $\lim_{r\to +\infty} \uptheta(r) = +\infty$. In particular, this implies the existence of some constant $r_E>0$ such that for every $\sfp=(\Sigma,m)\in L_E$:
	\[
		r_E^{-1}\le \sigma_i \le r_E\quad \text{for all $i=1,\ldots,d$},\qquad |m|\le r_E,
	\]
	from which we deduce the coercivity of $\sfE_1$. The direct method of Calculus of Variations then gives the existence of a minimizer of $\sfE_1$.
	
	The uniqueness of minimizers easily follows from the $\bbW_2$-geodesic $\uplambda$-convexity of $\calE$. Indeed, let $\sfp_0,\sfp_1\in\mfP_1$ be minimizers of $\sfE_1$ and consider a $\bbW_2$-geodesic $[0,1]\ni \tau \mapsto \rho_\tau$ with $\rho_0=\sfG(\sfp_0)$, $\rho_1=\sfG(\sfp_1)$. Since $\bbW_2$-geodesics preserve Gaussianity, we have that $\rho_\tau = \sfG(\sfp_\tau)$ for some $\sfp_\tau\in\mfP_1$ for all $\tau\in (0,1)$. The $\bbW_2$-geodesic $\uplambda$-convexity of $\calE$ yields
	\[
		\sfE(\sfp_{\frac{1}{2}}) = \calE(\sfG(\sfp_\frac{1}{2})) \le \min_{q\in\mfP_1} \sfE(\sfq)  - \frac{\uplambda}{8}\bbW_2^2(\sfG(\sfp_0),\sfG(\sfp_1)) < \min_{q\in\mfP_1} \sfE(\sfq),
	\]
	which leads to a contradiction unless $\sfG(\sfp_0)=\sfG(\sfp_1)$, or equivalently $\sfp_0=\sfp_1$.
	
	The Euler-Lagrange equations can be easily computed and are left to the reader.
\end{proof}

One way of obtaining exponential decay of the $\SHe$-flow is to use the energy-dissipation inequality \eqref{eq:energy-slope}---this approach appeared in \cite{chenGradientFlowsSampling2023,lambertVariationalInferenceWasserstein2022}---which we now outline. Taking the temporal derivative of $\sfE_1$ along \eqref{eq:general-target} gives
\[
	\frac{\dd}{\dd t}\sfE_1(\sfp) = - \alpha |\partial_\Ot \sfE_1|^2(\sfp) - \beta |\partial_\SHe \sfE_1|^2(\sfp),
\]
where the slopes are given by 
\begin{align*}
	|\partial_\Ot \sfE_1|^2(\sfp) &= \trace\bigl[(I-\Gamma^{-1}\Sigma)^2\Sigma^{-1}\bigr] + \left\langle \int\nabla V \sfG(\sfp)\dd x,\int\nabla V \sfG(\sfp)\dd x\right\rangle, \\
	|\partial_\SHe \sfE_1|^2(\sfp) &= \trace\bigl[(I-\Gamma^{-1}\Sigma)^2\bigr] + \left\langle \int\nabla V \sfG(\sfp)\dd x,\Sigma\int\nabla V \sfG(\sfp)\dd x\right\rangle.
\end{align*}
In particular, we see that $|\partial_\Ot \sfE_1|^2(\sfp) \le \sigma_\mafo{min}|\partial_\SHe \sfE_1|^2(\sfp)$, where $\sigma_\mafo{min}$ is the minimal eigenvalue of $\Sigma$.
Applying the energy-dissipation inequality \eqref{eq:energy-slope},
we obtain
\begin{multline*}
  \frac{\dd}{\dd t}\sfE_1(\sfp)
  =
- \alpha |\partial_\Ot \sfE_1|^2(\sfp) - \beta |\partial_\SHe \sfE_1|^2(\sfp)
\le
- \left(\alpha + \beta \sigma_\mafo{min}^{-1}\right)
|\partial_\Ot \sfE_1|^2(\sfp)
\\
{\leq}
- 2 \lambda
\left(\alpha + \beta \sigma_\mafo{min}^{-1}\right)
\left(
  \sfE_1(\sfp) - \inf_{\sfq\in \mfP_1} \sfE_1(\sfq)
\right)
 \le - \uplambda\bigl(2\alpha + \beta r_E^{-1}\bigr) \Bigl(\sfE_1(\sfp) - \inf_{\sfq\in \mfP_1} \sfE_1(\sfq)\Bigr),
\end{multline*}
where $r_E>0$ is the constant provided by Proposition~\ref{prop:sublevel-minimizer} with $E=\sfE_1(\sfp(0))$. 
\EEE
\medskip

A direct application of Gronwall's inequality provides the overall decay estimate, as summarized in the following theorem.

\begin{theorem}[Decay estimate: $\HK$-Boltzmann with log-$\uplambda$-concave target]\label{thm:decay-log-lambda}
	Let\\
  $\target$ be a log-$\uplambda$-concave target measure satisfying \eqref{eq:V_lambda_convex}. Then the solution $t \mapsto \sfp(t)=(\Sigma(t),m(t))$ to the Gaussian gradient system $(\mfP_1,\sfE_1, \bbK^\mathrm{red}_{\alpha,\beta})$ enjoys the decay estimate
	\[
		\sfE_1(\sfp(t)) \le \ee^{-\upgamma(\alpha,\beta;E) t}\sfE_1(\sfp(0)) + (1-\ee^{-\upgamma(\alpha,\beta;E) t})\inf_{\sfq\in\mfP_1} \sfE_1(\sfq),
	\]
	with the decay rate $\upgamma(\alpha,\beta;E)=\uplambda(2\alpha + \beta r_E^{-1})$.
\end{theorem}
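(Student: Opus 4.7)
The plan is to combine the energy-dissipation identity for the reduced gradient flow with a Polyak–\L{}ojasiewicz-type bound derived from the Bures–Wasserstein $\uplambda$-convexity of $\calE=\calH_\rmB(\cdot|\target)$ recalled in Remark~\ref{rem:W2_lambda_convexity}, and then close the argument with Gr\"onwall's lemma. The sublevel bounds from Proposition~\ref{prop:sublevel-minimizer} will supply the uniform-in-time control of the smallest eigenvalue of $\Sigma(t)$ needed to handle the Hellinger contribution when $\beta>0$.

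First I would differentiate $\sfE_1$ along the reduced flow $\dot\sfp=-\bbK^\mathrm{red}_{\alpha,\beta}(\sfp)\rmD\sfE_1(\sfp)$ and, using the additive splitting $\bbK^\mathrm{red}_{\alpha,\beta}=\alpha\bbK_\Ot^\mathrm{red}+\beta\bbK_\SHe^\mathrm{red}$ together with the explicit formulas for $\rmD\sfE_1$ in the general-target case (from Remark~\ref{rem:psi-derivative}), obtain the identity $\frac{\dd}{\dd t}\sfE_1(\sfp(t))=-\alpha|\partial_\Ot\sfE_1|^2(\sfp)-\beta|\partial_\SHe\sfE_1|^2(\sfp)$ with the two slopes given by the formulas stated before the theorem. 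Next, I would exploit Remark~\ref{rem:W2_lambda_convexity}: because Bures–Wasserstein geodesics preserve the Gaussian class, the global $\uplambda$-convexity of $\calE$ along $\bbW_2$-geodesics descends to the Bures–Wasserstein $\uplambda$-convexity of $\sfE_1$ on $\mfP_1$, which by the standard argument in \cite{AmGiSa05GFMS} yields the PL inequality $\sfE_1(\sfp)-\inf_{\sfq\in\mfP_1}\sfE_1(\sfq)\le (2\uplambda)^{-1}|\partial_\Ot\sfE_1|^2(\sfp)$ valid on all of $\mfP_1$.

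The third step is the slope comparison. Since the Hellinger slope is obtained from the Otto slope by inserting $\Sigma$ in both the matrix and vector blocks, one expects $|\partial_\SHe\sfE_1|^2(\sfp)\ge \sigma_\mafo{min}(\Sigma)|\partial_\Ot\sfE_1|^2(\sfp)$; for the vector part this is simply $\langle v,\Sigma v\rangle\ge \sigma_\mafo{min}(\Sigma)|v|^2$, and for the matrix part one passes to the symmetric coordinate $C=\Gamma^{-1/2}(\sfp)\Sigma\Gamma^{-1/2}(\sfp)$ and reduces to a scalar inequality on its eigenvalues. By dissipation, $t\mapsto\sfE_1(\sfp(t))$ is nonincreasing, hence the orbit remains in the sublevel $L_E=\{\sfE_1\le E\}$ with $E=\sfE_1(\sfp(0))$, and Proposition~\ref{prop:sublevel-minimizer} gives a uniform bound on the eigenvalues of $\Sigma(t)$, so in particular $\sigma_\mafo{min}(\Sigma(t))^{-1}\ge r_E^{-1}$ for all $t\ge0$. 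Combining the dissipation identity, the slope comparison, and the PL inequality produces the linear differential inequality $\frac{\dd}{\dd t}\bigl(\sfE_1(\sfp)-\inf\sfE_1\bigr)\le -\upgamma(\alpha,\beta;E)\bigl(\sfE_1(\sfp)-\inf\sfE_1\bigr)$, and Gr\"onwall's lemma then integrates to the stated exponential decay.

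The main technical obstacle is the slope comparison: the factor $(I-\Gamma^{-1}(\sfp)\Sigma)^2$ is not symmetric in general, and in the non-Gaussian target case $\Gamma^{-1}(\sfp)=-\int\nabla_x^2\log\target\,\sfG(\sfp)\,\dd x$ is itself $\sfp$-dependent, so the diagonalisation trick used in Section~\ref{su:PolLojEstim} has to be redone through a $\sfp$-dependent similarity transformation rather than a fixed one. A second, minor point is to verify that the Bures–Wasserstein metric slope used in Remark~\ref{rem:W2_lambda_convexity} coincides with $|\partial_\Ot\sfE_1|$ as computed from the reduced Onsager operator $\bbK_\Ot^\mathrm{red}$; this follows from the general reduction principle of Theorem~\ref{th:ReductOnsag}, which ensures that the restriction of the Bures–Wasserstein Riemannian structure to $\mfG_1$ agrees with the reduced one. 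Once these points are secured, the rest of the argument is routine.
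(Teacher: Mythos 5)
Your proposal follows essentially the same route as the paper's own argument: the energy--dissipation identity $\frac{\dd}{\dd t}\sfE_1=-\alpha|\partial_\Ot\sfE_1|^2-\beta|\partial_\SHe\sfE_1|^2$, the Bures--Wasserstein P\L{} inequality \eqref{eq:energy-slope} from Remark~\ref{rem:W2_lambda_convexity}, a comparison of the Hellinger and Otto slopes controlled by the sublevel eigenvalue bounds of Proposition~\ref{prop:sublevel-minimizer} (using that the energy is nonincreasing so the orbit stays in $L_E$), and Gr\"onwall's lemma. One bookkeeping remark: with your (correctly oriented) comparison $|\partial_\SHe\sfE_1|^2\ge\sigma_\mafo{min}(\Sigma)\,|\partial_\Ot\sfE_1|^2$ the bound you actually need is the lower one, $\sigma_\mafo{min}(\Sigma(t))\ge r_E^{-1}$, rather than the inequality $\sigma_\mafo{min}(\Sigma(t))^{-1}\ge r_E^{-1}$ you quote --- but Proposition~\ref{prop:sublevel-minimizer} supplies both, so the argument closes as intended.
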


Notice that the decay rate corresponding to the Hellinger term depends on $\uplambda$, which contrasts with the case of Gaussian target measures. 
We leave more refined analysis than Theorem~\ref{thm:decay-log-lambda} for future work.

\section{Numerical algorithms and experiments}
\label{sec:NumericalAlgorithms}
\subsection{A discrete-time \texorpdfstring{$\HK$}{HK}-Gaussian gradient descent algorithm}
We now consider a discrete-time scheme for simulating the $\HK$-Gaussian gradient flow over Gaussian measures.
In essence,
our approach corresponds to
a gradient descent scheme
discretizing the
flow equation \eqref{eq:HK.Gauss.mass-shape-ode-all}
by interleaving the following Fisher-Rao \cite{amariMethodsInformationGeometry2000}, Bures-Wasserstein \cite{lambertVariationalInferenceWasserstein2022}, and mass update steps.

\paragraph{Fisher-Rao step}
We first derive the discrete-time update algorithm by discretizing the Fisher-Rao-Gaussian system for a target measure $\target\propto \ee^{-V}\rmd x$, which we recall here: 
\begin{subequations}
  \label{eq:Fisher-Rao-Gaussian}
\begin{align}
  \dot \Sigma & =
      \beta \left(\Sigma 
      - \Sigma \Gamma^{-1} \Sigma\right),
  \label{eq:Fisher-Rao-covariance-update}\\
  \dot m  & = -\beta\int_{\R^d} \Sigma\, \nabla V \sfG(\sfp)\dd x
  ,
  \label{eq:Fisher-Rao-mean-update}
\end{align}
where we use the notation $\Gamma^{-1} := \int_{\R^d} \nabla^2 V\sfG(\sfp)\dd x$. It can be interpreted as the inverse of the target covariance matrix.
\end{subequations}
Using an explicit Euler discretization, we derive the following update scheme:
\begin{align*}
	\Sigma_{k+1}^{-1} &\gets (1-\tau\beta) \Sigma_k^{-1} + \tau\beta \Gamma_k^{-1},\\
	m_{k+1} &\gets m_k + \tau \beta\Sigma_k \int_{\R^d} \nabla V\sfG(\sfp_k)\dd x,\qquad \sfp_k=(\Sigma_k,m_k),
\end{align*}
where we use the notation $\Gamma^{-1}_k := \int_{\R^d} \nabla^2 V\sfG(\sfp_k)\dd x$;
$\tau>0$ is the step size of the discrete-time scheme.
Note that, in numerical implementation, we only need to keep track of the precision matrix $\Sigma^{-1}$ instead of the covariance.

\paragraph{Bures-Wasserstein step}
The Bures-Wasserstein ODE takes the form
\begin{subequations}
  \label{eq:Bures-Wasserstein-ODE}
\begin{align}
    \dot  \Sigma &= 
    \alpha\left(2 I - \Sigma\Gamma^{-1} + \Gamma^{-1}\Sigma\right),
    \\
    \dot m  & = -\alpha\int_{\R^d} \nabla V \sfG(\sfp)\dd x.
  \end{align}
\end{subequations}
Then, the discrete-time update scheme is given by
\begin{align*}
	\Sigma_{k+1} &\gets \bigl(I + \tau\alpha \bigl(\Sigma_k^{-1} - \Gamma_k^{-1} \bigr)\bigr) \Sigma_k\bigl(I + \tau\alpha \bigl(\Sigma_k^{-1} - \Gamma_k^{-1} \bigr)\bigr),\\
	m_{k+1} &\gets m_k - \tau \alpha\int_{\R^d} \nabla V \sfG(\sfp_k)\dd x, 
\end{align*}

\paragraph{Mass update}
The discrete-time mass update step can also be straightforwardly derived from the mass ODE \eqref{eq:HK.mass-ode}, yielding the scheme
\begin{align*}
  \kappa_{k+1} \gets \kappa_k - \tau \beta  \kappa_k 
      \left(\calH(\rho_k|\target) + \log \kappa_k\right)
  .
\end{align*}

\paragraph{Estimating the integrals}
When the target measure $\target$ is a general non-Gaussian measure, the estimation of the integrals $\int_{\R^d} \nabla V \sfG(\sfp)\dd x$ and $\int_{\R^d} \nabla ^2 V \sfG(\sfp)\dd x$
can be done via Monte Carlo sampling by generating $n$ samples $X_k^i \sim \sfG(\sfp_k)$ for $i=1,\ldots,n$.
Then, the Monte Carlo estimate of the integrals is given by
\begin{align*}
  \int_{\R^d}  \nabla ^s V  \sfG(\sfp_k)\dd x
  \approx 
  \frac1n \sum_{i=1}^n \nabla ^s V(X_k^i) \text{ for } s = 0,1,2
  .
\end{align*}
We outline the concrete update scheme in Algorithm~\ref{alg:JKO}. For simplicity, we omit the superscript $i$ for the $i$-th sample in the algorithm box.
As this paper is not concerned with numerics and computational efficiency, we use small constant step sizes for $\tau$ throughout the experiments.
\begin{algorithm}
  \caption{Discrete-time HK-Gaussian Gradient Descent}
  \begin{algorithmic}[1]
  \State \textbf{Data:}
  initial mean $m_0$ and covariance $\Sigma_0$,
  step size $\tau  > 0$,
  potential function $V$ from the target $\target \propto \exp(-V)$ as an oracle
  \For{$k = 1,\ldots,N$}
      \State draw sample $X_k \sim \sfG(\sfp_k)$
      \State $\Sigma_{k+\frac12} \gets (I + \tau\alpha( \Sigma_k^{-1} + \nabla^2 V(X_k) ))\Sigma_k (I + \tau\alpha( \Sigma_k^{-1} + \nabla^2 V(X_k) ))$
      \Comment{BW: cov.}
      \State
      $m_{k+\frac12} \gets m_k - \tau \alpha
         \nabla V(X_k)
           $
           \Comment{BW: mean}
      \State (optional) re-sample $X_k \sim  \sfG(\sfp_{k+\frac12})=\sfG(\Sigma_{k+\frac12},m_{k+\frac12})$
      \State
           $\Sigma_{k+1}^{-1}
           \gets (1-\tau\beta) \Sigma_{k+\frac12}^{-1} + \tau\beta \nabla^2 V(X_k)$
           \Comment{FR: cov.}
      \State
      $m_{k+1} \gets m_{k+\frac12} - \tau \beta
        \Sigma_{k+1} \nabla V(X_k)
           $
           \Comment{FR: mean}
      \State 
      $\kappa_{k+1} \gets \kappa_k - \tau \beta  \kappa_k 
      \left(\calH(\rho_k|\target) + \log \kappa_k\right)
      $
      \Comment{Hellinger: mass}
  \EndFor
  \end{algorithmic}
  \label{alg:JKO}
\end{algorithm}

\subsection{Numerical experiments}
For notational convenience, we will use the variable $\theta\in \R^d$ to denote the base space, which is typically the parameter we wish to estimate in computational problems.
We can view the general risk minimization problem in optimization and machine learning
\begin{align}
  \min_\theta \mathbb E_{\xi \sim P_\text{data}}\ell (\theta, \xi)
  \label{eq:risk_minimization}
\end{align}
through the lens of the gradient flows in this paper.
The random variable $\xi$ represents the training data for the learning problem
and the probability distribution $P_\text{data}$ represents the data generating process.

A probabilistic perspective of the learning problem is given by lifting the minimization variable $\theta$ to the probabilistic distribution.
This is by considering the risk function $\mathbb E_{\xi \sim P_\text{data}}\ell (\theta, \xi)$ as the potential function over the decision variable $\theta$ (also referred to as the learning model).
Then, defining the linear energy functional
$V(\mu) = \int \mathbb E_{\xi \sim P_\text{data}}\ell (\theta, \xi) \mu(d\theta)$
for $\mu \in \calP(\R^d)$,
we can consider the entropy-regularized functional 
\begin{align*}
  F_\lambda(\mu) = \frac1\lambda V(\mu) + 
  \int  \log \mu(\theta) \dd \mu (\dd \theta ) 
  = \calH_\rmB\left(\mu(\theta)\Bigg| 
  \frac1{z}
          \exp{\left(-\frac{\mathbb E_{ P_\text{data}}\ell (\theta, \xi)}{\lambda}\right)}            
        \right)
  ,
\end{align*}
where $\lambda$ is the regularization parameter and $z$ is the normalization constant.
In general, we assume $\mu$ to be absolutely continuous w.r.t. the Lebesgue measure.
Then, the target measure satisfies
\begin{align*}
  \target(\theta) \propto 
    \exp{\left(-\frac{\mathbb E_{ P_\text{data}}\ell (\theta, \xi)}{\lambda}\right)}
  \quad \text{and} \quad
  \mathbb E_{ P_\text{data}}\ell (\theta, \xi) = - \lambda \log \target(\theta)+\text{const}
\end{align*}
The machine learning risk minimization problem can be cast as the minimization of the Boltzmann-Shannon entropy functional $F_\lambda$ over the space of probability measures.
In this paper's context, we can view such optimization dynamics as the (HK-Gaussian) gradient flow of the functional $F_\lambda$,
i.e., considering the gradient system $(\calP(\R^d),\calH_\rmB(\cdot|\target) , \SHK_{\alpha,\beta})$.

\paragraph*{Gaussian target measures}
In the first experiment, we consider the case when the target measure $\target$ is a Gaussian measure.
Without loss of generality, we set the target mass $\kappa_\target=1$.
We apply Algorithm~\ref{alg:JKO}
to simulate the HK-Gaussian gradient flow of the functional $\calH_\rmB(\mu |\target)$ over the space of positive measures.
We plot the evolution of the contour of a two-dimensional Gaussian measure $\mu_k$ in
\autoref{fig:gaussian_target}.
\begin{figure}[htbp]
  \centering
  \includegraphics[width=0.485\textwidth]{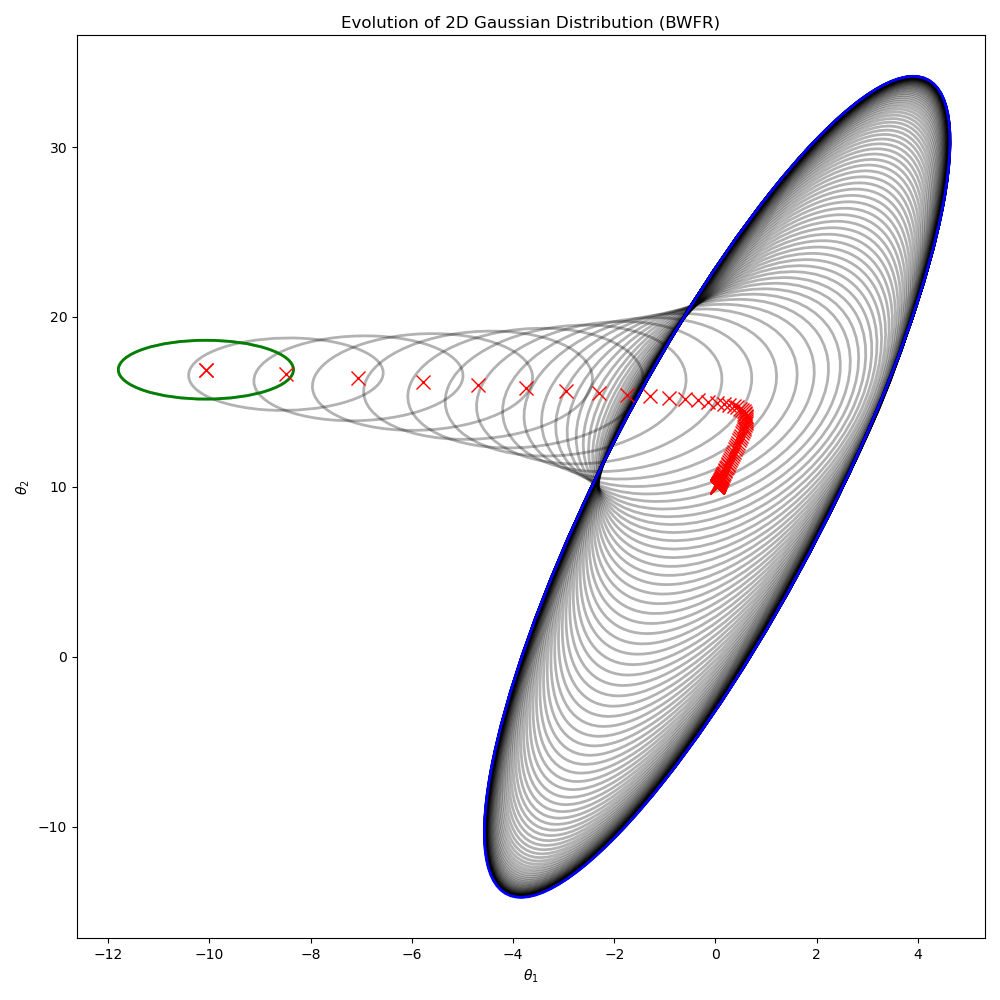}
  \includegraphics[width=0.97\textwidth]{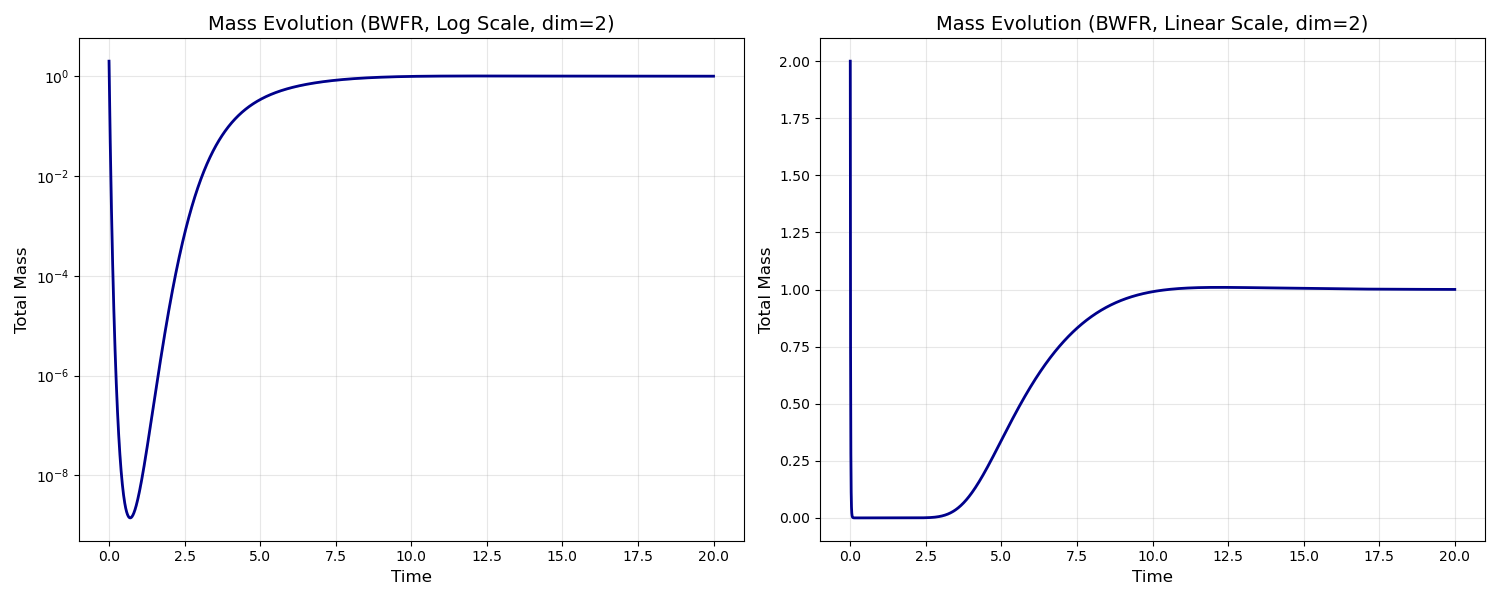}
  \caption{(Top) Evolution of the Gaussian probability measure $\mu_k$ (black contours) towards the target Gaussian measure $\target$ (blue contour).
  We simulated the evolution using Algorithm~\ref{alg:JKO} that implements the $\HK$-Gaussian gradient descent.
  (Bottom) Evolution of the mass variable $\kappa_k$ in both log and linear scale.}
  \label{fig:gaussian_target}
\end{figure}

We then consider the gradient flow of Gaussian measures defined on $\bbR^d$ with $d=100$.
In this case, the target and initial measures have less overlapping mass than the lower-dimensional setting.
\autoref{fig:kl_evo_dim100} plots the decay of the functional $\calH_\rmB(\mu_k|\target)$ and the mass variable $\kappa_k$ for the gradient flow of probability measures defined on $\bbR^{100}$.
\begin{figure}[htbp]
  \centering
  \includegraphics[width=0.97\textwidth]{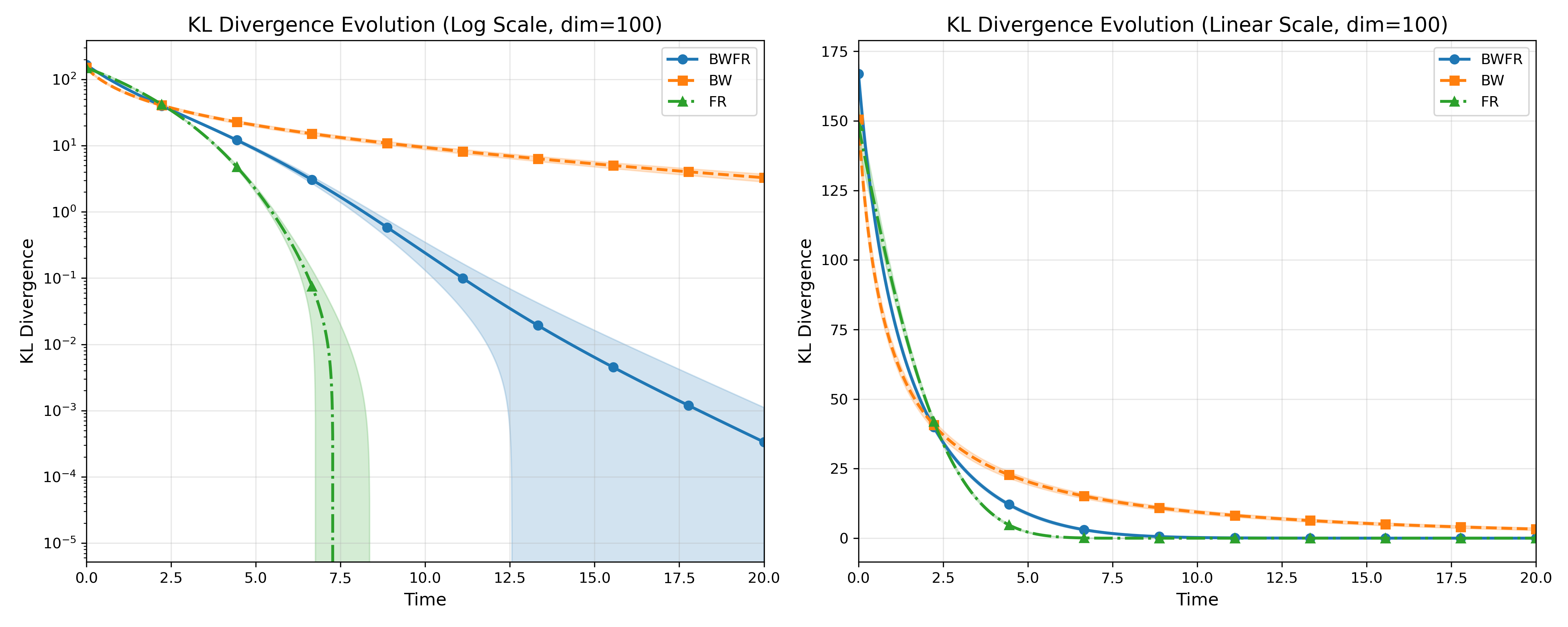}
  \includegraphics[width=0.97\textwidth]{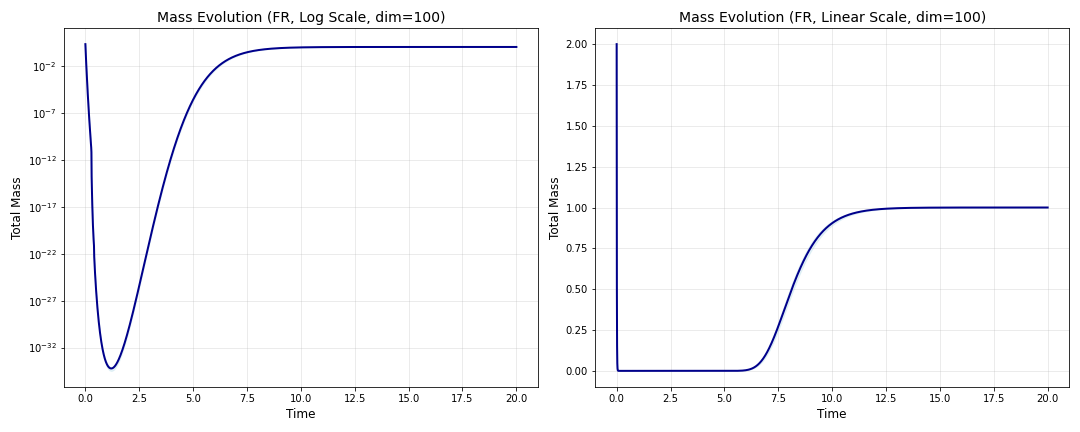}
  \caption{(Top) Evolution of the KL divergence $\calH_\rmB(\mu_k|\target)$ (blue line) for the three different gradient flows of probability measures defined on $\bbR^{100}$: the Hellinger-Kantorovich-Gaussian gradient flow (blue; abbreviated as BWFR (Bures-Wasserstein-Fisher-Rao)), the Fisher-Rao gradient flow (green), and the Bures-Wasserstein gradient flow (orange).
  We observe that the Fisher-Rao gradient flow has the most rapid convergence in KL divergence asymptotically, while the Bures-Wasserstein gradient flow is initially faster. The HK-Gaussian gradient flow is between those two.
  (Bottom) Evolution of the mass variable $\kappa_k$ for the gradient flow of probability measures defined on $\bbR^{100}$.
  }
  \label{fig:kl_evo_dim100}
\end{figure}

\paragraph*{Bayesian logistic regression}
As an example of a machine learning problem where the target (i.e. posterior) measure is not Gaussian,
we now visit a standard example from the machine learning literature, namely the Bayesian logistic regression task \cite{bishop2006pattern}.
In this case, we consider a two-class classification problem (as a risk minimization problem~\eqref{eq:risk_minimization}) given training data $\xi = (x, y)$ where $x\in \bbR^d$ is the feature vector and $y\in \{0,1\}$ is the binary response.
For each training data point $(x, y)$, we can define the loss function
using the negative log-likelihood of a Bernoulli random variable model,
\begin{align*}
  \ell_{\text{logistic}} (\theta, [x, y]) &= 
     -y \log \frac{\ee^{\theta^\top x+\theta_0}}{1+\ee^{\theta^\top x+\theta_0}}
    -(1-y) \log \frac{1}{1+\ee^{\theta^\top x+\theta_0}}\\
    &= - y \cdot \left(
      \theta^\top x + \theta_0\right)
      + \log \left(1 + \ee^{\theta^\top x + \theta_0}\right),
\end{align*}
where $\xi = (x, y)$ is a training data point.
Note that the logistic loss function $\ell_{\text{logistic}} (\theta, [x, y])$ is convex in $\theta$.
The task of Bayesian logistic regression is then to approximate a target posterior distribution $\target$, which does not have a closed-form expression and is non-Gaussian.
Furthermore, using our insight of the shape-mass relation of the $\HK$ and $\SHK$ flows, we omit the mass update step in Algorithm~\ref{alg:JKO} for the Bayesian logistic regression task.

In machine learning, the true data generating process $P_\text{data}$ of the risk minimization problem \eqref{eq:risk_minimization} is unknown in practice and replaced by the empirical distribution of the training data $\frac1n\sum_{i=1}^n \delta_{\xi_i}$.
Then, the Bayesian logistic regression problem amounts to the risk minimization problem
\begin{align*}
  \min_{\mu\in \mfG_1}
  \int 
  \frac1{n\lambda}\sum_{i=1}^n
        { \ell_{\text{logistic}} (\theta, [x_i, y_i])}
          \dd \mu(\theta) 
          + \int  \log \mu(\theta) \dd \mu (\dd \theta ) 
        ,
\end{align*}
where $x_i$ and $y_i$ are the training data pairs.
Using the previous notation,
we can write the functional as
the Boltzmann entropy functional
$$
\calH_\rmB\left(\mu(\theta)\Bigg| 
  \frac1{z}
          \exp{\left(-{ \frac1{n\lambda}\sum_{i=1}^n \ell_{\text{logistic}} (\theta, [x_i, y_i])}
          \right)}            
        \right)
        .
$$
We now apply Algorithm~\ref{alg:JKO} to simulate the HK-Gaussian gradient flow of the above entropy functional (without the mass update step). The evolution of the Gaussian probability measure $\rho_k$ is plotted in \autoref{fig:bayesian_logistic}~(Left).
In the right plot of \autoref{fig:bayesian_logistic}, we generate samples from the optimized Gaussian measure $\theta^*_i\sim \mu^*$ and plot the classification decision boundary associated with the parameter $\theta^*_i$. See the caption for more details.
\begin{figure}[htbp]
  \centering
  \includegraphics[width=0.49\textwidth]{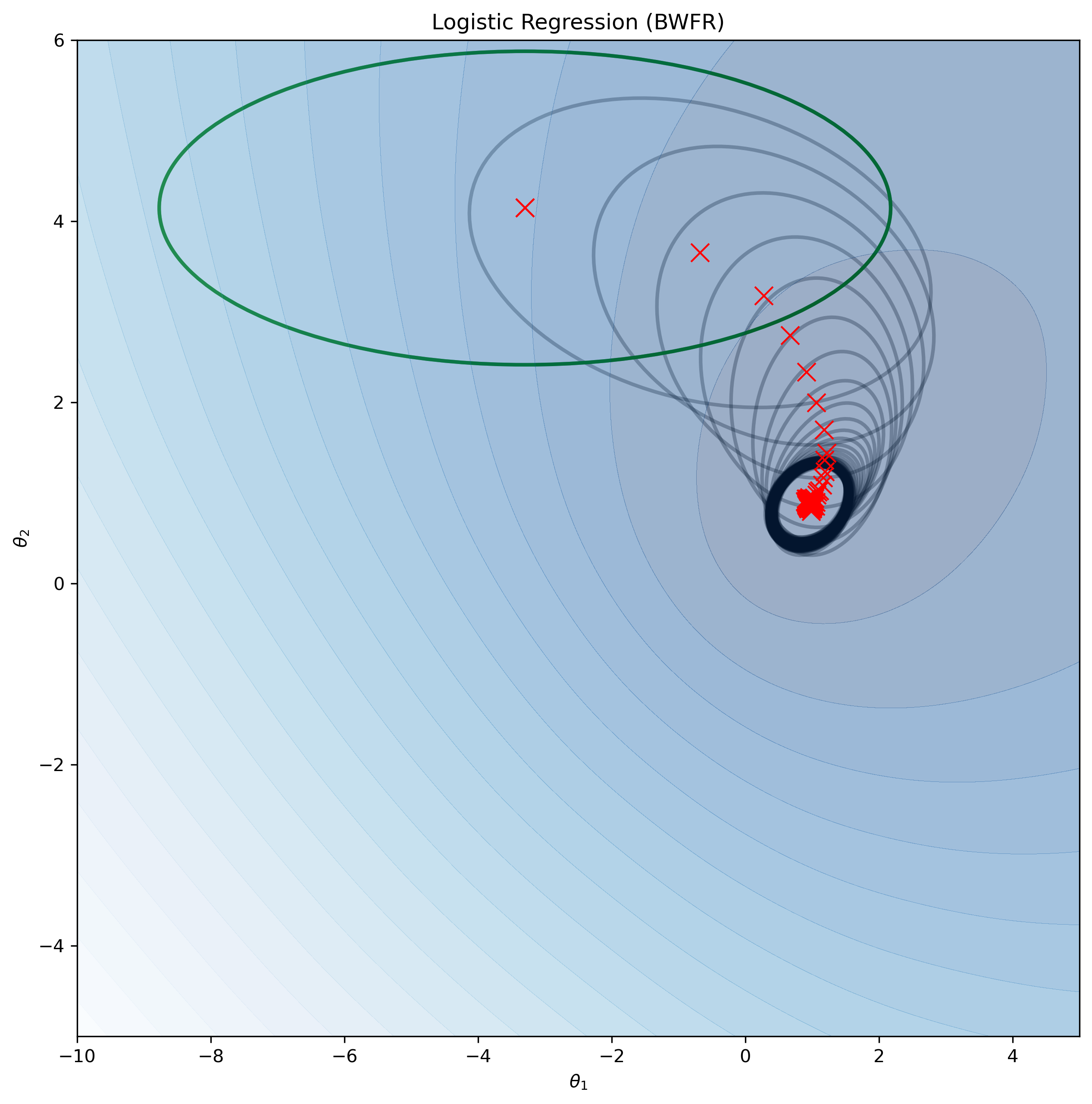}
  \includegraphics[width=0.49\textwidth]{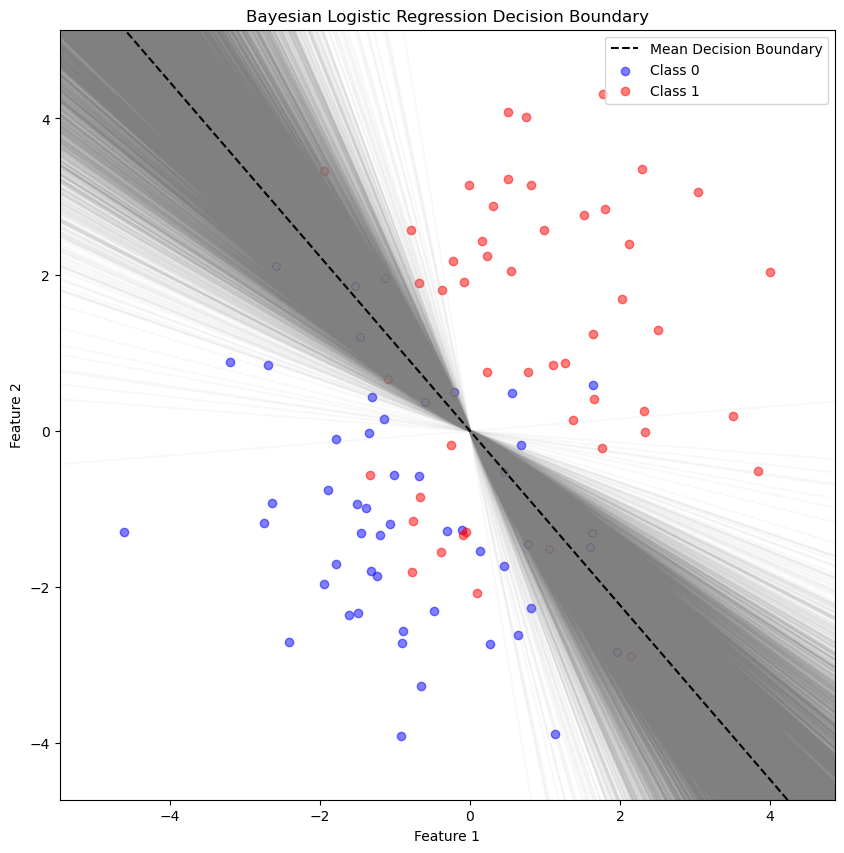}
  \caption{(Left) Evolution of the Gaussian probability measure $\rho_k$ (black ellipsoidal contours) for the Bayesian logistic regression task.
  The background color represents the log-density of the Gaussian measure. The darker shade corresponds to higher density.
  The contours of the true target measure $\target$ is not ellipsoidal in the Bayesian logistic regression task.
  (Right) Samples of the classification decision boundary from the optimized Gaussian measure.
  }
  \label{fig:bayesian_logistic}
\end{figure}

\appendix

\section{Useful formulas for Gaussians}
\label{se:AppGauss}

For constants $c_0,c_2,c_4 \in \R$,  vectors $a,b\in \R^d$ and symmetric
matrices $A,B\in \R^{d\ti 
  d}_\text{sym}$ we have the relations
\begin{subequations}
\label{eq:A.all}
\begin{gather}
\label{eq:A.1D}
\int_\R (c_0{+}c_2 x^2{+}c_4x^4)\, \frac{\ee^{-\frac12 x^2}}{\sqrt{2\target}} \dd
x=c_0{+}c_2{+}3c_4,   
\qquad \int_{\R^d} \frac{\ee^{-\frac12|x|^2}}{(2\target)^{d/2}}\dd x=1, 
\\[0.5em]
\label{eq:A.DD02}
\int_{\R^d} (a{\cdot}x)\,(b{\cdot}
  x)\,\frac{\ee^{-\frac12|x|^2}}{(2\target)^{d/2}}\dd x= a{\cdot} b,  
\qquad \int_{\R^d} x {\cdot} Ax \, 
   \frac{\ee^{-\frac12|x|^2}}{(2\target)^{d/2}} \dd x=\trace(A), 
 \\[0.5em]
\label{eq:A.DD4}
\int_{\R^d}(x{\cdot} Ax) \,(x{\cdot}Bx) \, \frac{\ee^{-\frac12|x|^2}} 
   {(2\target)^{d/2}}\dd x = 2\,A \mdot B +
   \trace(A)\trace(B). 
\end{gather}
\end{subequations}

\section{Well-posedness of \eqref{eq:ODEs.mfQ}}
\label{se:ODEs.mfQ}

Notice that the equation for the \emph{precision matrix} $A$ is independent of the other variables, allowing us to analyze \eqref{eq:ODEs.mfQ.A} independently. To this end, we consider \eqref{eq:ODEs.mfQ.A} as an evolution on the closed convex subset $\R^{d\ti d}_\spd$ of symmetric and positive semidefinite matrices, and we equip $\R^{d\ti d}$ with the scalar product
	\[
		\langle A,B\rangle := \trace[A^\top B],\qquad A,B\in \R^{d\ti d}.
	\]
	Defining the nonlinear operator 
	\[
		\R^{d\ti d}\ni A\mapsto f(A) := -\alpha \{A-\ol A,A\} - \beta (A-\ol A),
	\]
	where $\{A,B\}=AB + BA$ is the anti-commutator of $A$ and $B$, \eqref{eq:ODEs.mfQ.A} then reads $\dot A = f(A)$.

For any $A,B\in \R^{d\ti d}_\spd$, we see that
\[
	f(A)-f(B) = \{A+B-\ol A,A-B\},
\]
and from which we deduce
\[
	\langle f(A)-f(B),A-B\rangle = -2\alpha\, \trace[(A+B-\ol A)(A-B)^2] -\beta\,\trace[(A-B)^2].
\]
Since $A,B\in \R^{d\ti d}_\spd$, $\trace[(A+B)(A-B)^2] \ge 0$, and therefore,
\[
		\langle f(A)-f(B),A-B\rangle \le \bigl(2\alpha \|\ol A\|_\infty - \beta\bigr) \|A-B\|^2 =: \omega \|A-B\|^2,
\]
i.e., the nonlinear operator $f-\omega I$ is dissipative. An application of \cite[Theorem~6.1]{Martin1976} then provides a unique global solution to the initial value problem
	\[
		\dot A = f(A),\qquad A(0) = A_0\in \R^{d\ti d}_\spd.
	\]
	
	Notice that since $f(\ol A)=0$, we have that
	\[
		\langle f(A)-f(\ol A),A-\ol A\rangle = -2\alpha\, \trace[A(A-\ol A)^2] -\beta\,\trace[(A-\ol A)^2] \le -\beta  \|A-\ol A\|^2,
	\]
	from which we obtain the (coarse) exponential decay estimate
	\[
		\|A(t)-\ol A\|^2 \le \ee^{-\beta t}\|A_0-\ol A\|^2.
	\]
	Clearly, the decay rate can be improved if a uniform lower bound for the minimal eigenvalue of $A(t)$ along the flow is provided. This is performed in Section~\ref{se:LongTime}.
	
	\begin{remark}
	    There is also a formal gradient structure for \eqref{eq:ODEs.mfQ.A} with quadratic driving energy. Indeed, considering the driving energy
     \[
        \mathscr{E}(A) = \trace[(A-\ol A)^2],\qquad A\in\R^{d\ti d}_\spd,
     \]
     and dual dissipation potential
     \[
        \mathscr{R}^*(A,\zeta) = \frac{\alpha}{2} \trace[\zeta^2 A] + \frac{\beta}{4}\trace[\zeta^2],\qquad A\in \R^{d\ti d}_\spd,\;\zeta\in\R^{d\ti d},
     \]
     the right-hand side of \eqref{eq:ODEs.mfQ.A} reads
     \[
        f(A) = \partial_2 \mathscr{R}^*(A, - \rmD\mathscr{E}(A)).
     \]
	\end{remark}

\paragraph*{Acknowledgments.} A.M. was partially supported by
the Deutsche Forschungsgemeinschaft (DFG) through the Berlin Mathematics
Research Center MATH+ (EXC-2046/1, project ID: 390685689) subproject ``DistFell''.
J.Z. acknowledges the support from the Deutsche Forschungsgemeinschaft (DFG, German Research Foundation) as part of the priority programme "Theoretical Foundations of Deep Learning" (project number: 543963649). M.L. acknowledges support from DFG CRC/TRR 388 Rough
Analysis, Stochastic Dynamics and Related Topics, project A02. O.T. was supported by the Netherlands Organisation for Scientific Research (NWO) under Grant No.\ 680.92.18.05 and NGF.1582.22.009.

\footnotesize

\addcontentsline{toc}{section}{References}

\bibliographystyle{my_alpha}
\bibliography{alex_pub,bib_alex,bib_additional}

\end{document}